\documentclass[12pt]{amsart}

\usepackage{hyperref}

\usepackage[scheme=plain]{ctex} 

\usepackage{ctex}
\usepackage{epsfig}
\usepackage{graphicx}
\usepackage{amssymb, amstext, amscd, amsmath}
\usepackage{amsthm, mathrsfs, amsfonts,dsfont}
\usepackage{fullpage}
\usepackage{txfonts}
\usepackage{fancybox}
\usepackage{color}
\usepackage{cite}
\usepackage{comment}
\newtheorem{theorem}{Theorem}[section]
\newtheorem{lemma}[theorem]{Lemma}
\newtheorem{proposition}[theorem]{Proposition}
\newtheorem{corollary}[theorem]{Corollary}
\theoremstyle{definition}
\newtheorem{definition}[theorem]{Definition}
\newtheorem{example}[theorem]{Example}

\newtheorem{question}[theorem]{Question}
\theoremstyle{remark}
\newtheorem{remark}[theorem]{Remark}
\numberwithin{equation}{section}

\newtheorem*{Thm}{\bf{Main Theorem}}

\begin{document}
\title{A geometric approach to the  compressed shift operator on the Hardy space over the bidisk}

\author{Yufeng Lu}
\address{School of Mathematics Sciences, Dalian University of Technology, Dalian, Liaoning, 116024, P. R. China}
\address{State Key Laboratory of Structural Analysis for Industrial Equipment, Dalian, Liaoning, 116024, P. R. China}
\email{lyfdlut@dlut.edu.cn}

\author{Yixin Yang}
\address{School of Mathematics Sciences, Dalian University of Technology,
Dalian, Liaoning, 116024, P. R. China}
\email{yangyixin@dlut.edu.cn}

\author{Chao Zu$^{*}$}
\address{School of Mathematics Sciences, Dalian University of Technology,
Dalian, Liaoning, 116024, P. R. China}
\email{zuchao@mail.dlut.edu.cn}

\subjclass[2010]{Primary 47B38; Secondary  47A15}


\thanks{* Corresponding author.}
\thanks{This research is supported by National Natural Science Foundation of China
(No.  12031002, 11971086). This research is also partially supported by Dalian High-level Talent innovation Project (Grant 2020RD09).}

\begin{abstract}
This paper studies the  compressed shift operator $S_z$ on  the Hardy space   over the bidisk via the geometric approach. We calculate the spectrum and essential spectrum of     $S_z$ on the Beurling type   quotient modules induced by rational inner functions, and   give a complete characterization  for $S_z^*$ to be a Cowen-Douglas operator. Then  we extend  the concept of Cowen-Douglas operator to be the generalized Cowen-Douglas operator, and show that $S_z^*$ is a generalized Cowen-Douglas operator. Moreover, we establish the connection between the reducibility of the Hermitian holomorphic vector bundle induced by kernel spaces and the reducibility of the generalized Cowen-Douglas operator.  By using the geometric approach, we study the reducing subspaces of $S_z$ on certain polynomial quotient modules.
\end{abstract}

\keywords{Hardy space over the bidisk, Compressed shift operator, Cowen-Douglas operator, Reducing subspace}

\maketitle

\section{Introduction}

Let $\mathbb{D}^2=\{(z,w): |z|<1, |w|<1 \}$ be the  open unit bidisk in $\mathbb{C}^2$, and  $\mathbb{T}^2=\{(z,w): |z|=1, |w|=1 \}$ be the distinguished boundary of $\mathbb{D}^2$.  Let $|dz|$ denote the normalized Lebesgue measure on the unit circle $\mathbb{T}$, and $dm=|dz||dw|$ be the product measure on the torus $\mathbb{T}^2$. The Hardy space $H^2(\mathbb{D}^2)$ over the bidisk is the Hilbert space of all holomorphic functions on $\mathbb{D}^2$  satisfying
\[ \|f\|^2=\sup_{0\leq r<1} \int_{\mathbb{T}^2} |f(rz,rw)|^2 dm<\infty. \]
On $H^2(\mathbb{D}^2)$,  the Toeplitz operators $T_z$ and $T_w$
are unilateral shifts of infinity multiplicity. A closed subspace
$\mathcal{M}\subset H^2(\mathbb{D}^2)$ is called a submodule if it is
invariant under the action of both $T_z$ and $T_w$.  The quotient space
$H^2(\mathbb{D}^2)\big/\mathcal{M}$,  naturally identified with
$\mathcal{N}=H^2(\mathbb{D}^2)\ominus \mathcal{M}$,   is endowed with a
$\mathbb{C}[z,w]$-module structure by
$$p\cdot f=p(S_z, S_w)f, \quad p\in \mathbb{C}[z,w], \quad f\in \mathcal{N},$$
where $S_z=P_NT_z|_\mathcal{N}$ and $S_w=P_NT_w|_\mathcal{N}$ are compressed shift operators on $\mathcal{N}$,  $\mathbb{C}[z,w]$  denotes the polynomial ring in two variables,    and $P_\mathcal{N}$ stands for the
orthogonal projection from $H^2(\mathbb{D}^2)$ onto $\mathcal{N}$.  In functional model theory, the compression  of  shift operators on the quotient modules serve as canonical model for a large class of operators. For example,  the  Bergman shift  is unitarily equivalent to $S_z$ on the quotient module $H^2(\mathbb{D}^2)\ominus[z-w]$. This idea was used successfully in studying the invariant subspaces and reducing subspaces of multiplication operators on the Bergman space \cite{GH,Ric}.

The classical Beurling theorem \cite{Beu} says that every invariant subspace of $H^2(\mathbb{D})$ has the form $\theta H^2(\mathbb{D})$ for some inner function $\theta(z)$ on $\mathbb{D}$. The model space $\mathcal{K}_\theta=H^2(\mathbb{D})\ominus \theta H^2(\mathbb{D})$ and the compressed shift operator on $\mathcal{K}_\theta$  have  played  important roles in developing both function and operator theory over the past century \cite{Ber,Nik}.  However, the structure of submodules of $H^2(\mathbb{D}^2)$ is far more complex. For instance, W. Rudin displayed two pathological submodules in \cite{Rud}, one is of infinite rank, and the another one contains no nonzero bounded holomorphic functions.  In the past few decades, many researchers  have paid attention to both the operator theory and function theory on $H^{2}(\mathbb{D}^{2})$. We refer the readers to see \cite{BL,CG,Rud,GW,Yan2001,Yan2002,Yan2019} and references  therein.

 A function $\theta \in H^{2}(\mathbb{D}^{2})$ is called  an inner function on $\mathbb{D}^2$ if  $|\theta(e^{i\theta_1},e^{i\theta_2})|=1$ almost everywhere on $\mathbb{T}^{2}$. To every inner function $\theta(z,w)$ on $\mathbb{D}^2$, the associated quotient module
\[\mathcal{K}_{\theta}=H^2(\mathbb{D}^2)\ominus \theta H^2(\mathbb{D}^2)\]
 is called the Beurling type quotient module in the frame of Hilbert modules \cite{DP}. The compressed shift operators $S_z$ and their adjoints $S_z^*$  on the Beurling type quotient modules  $\mathcal{K}_{\theta}$ form a rich area of analysis. For instance,   the structure of $\mathcal{K}_{\theta}$ induced by rational inner function is related to the Agler decomposition and stable polynomials (see \cite{BK,BL,Kne}), and the techniques from algebraic geometry and complex geometry are intrinsic to the study of $S_z$.

 Let $T$ be a bounded linear operator on a Hilbert space $\mathcal{H}$, and a closed subspace $\mathcal{M}$ of $\mathcal{H}$ is called a reducing subspace of $T$  if $T\mathcal{M}\subset \mathcal{M}$  and $T^*\mathcal{M}\subset \mathcal{M}$. A reducing subspace $\mathcal{M}$ is called minimal if the only reducing subspaces contained in $\mathcal{M}$ are $\mathcal{M}$ and $\{0\}$. If $T$ has a proper reducing subspace, we say  $T$ is reducible, otherwise, $T$ is said to be irreducible. The classification of invariant subspaces and reducing subspaces for various classes of linear operators has inspired much deep research and prompted many interesting problems. Not only has the problem itself turned out to be important, but also the methods used to solve it are interesting.  Insights on the reducing subspaces of multiplication operators on the Bergman space can be found in \cite{DPW,DSZ,GH,Zhu}, and insights on the reducing subspace of truncated Toeplitz operators can be found in \cite{DF} and \cite{Li}.


In 1990, Agler \cite{Agler} showed that every $\varphi$ in the unit ball of $H^\infty(\mathbb{D}^2)$ admits the following decomposition:
\begin{equation*}
  \frac{1-\overline{\varphi(\lambda,\mu)}\varphi(z,w)}{(1-\bar{\lambda}z)(1-\bar{\mu}w)}=\frac{K_1(z,w,\lambda,\mu)}{1-\bar{\lambda}z}+\frac{K_2(z,w,\lambda,\mu)}{1-\bar{\mu}w},
\end{equation*}
where $K_1,K_2:\mathbb{D}^2\times \mathbb{D}^2\rightarrow \mathbb{C}$ are two positive kernels. For an inner function $\theta$, we see that $\mathcal{K}_{\theta}$ can be decomposed (see \cite{BSV}) as
\[ \mathcal{K}_{\theta}=\mathcal{S}_1\oplus \mathcal{S}_2, \]
where $\mathcal{S}_1=\mathcal{H}(\frac{K_1}{1-\bar{\lambda}z})$ and $\mathcal{S}_2=\mathcal{H}(\frac{K_2}{1-\bar{\mu}w})$ are $T_z$-invariant and $T_w$-invariant, respectively, and $\mathcal{H}(K)$ denotes the reproducing kernel Hilbert space with reproducing kernel $K$. These types of subspaces $\mathcal{S}_1$ and $\mathcal{S}_2$ are called \textit{Agler subspaces} of $\mathcal{K}_{\theta}$. If $\mathcal{S}_1$ is also a reducing subspace of $S_z$, we call it \textit{Agler reducing subspace} of $S_z$. We say $S_z$ is \textit{Agler reducible} if it has a nontrivial Agler reducing subspace.

In 2017, for a rational inner function $\theta$, K. Bickel, G. Knese and C. Liaw studied $\mathcal{K}_{\theta}$ by Agler decomposition in \cite{BK,BL} and the  Agler  reducibility  of $S_{z}$    in \cite{BL}. They showed that $S_z$ on the Beurling-type quotient module $\mathcal{K}_{\theta}$ induced by a rational inner function is Agler reducible if and only if $\theta$ is the product of two one-variable inner functions. In \cite{ZYL}, we extended this result to any inner functions and defined \textit{pure isometry reducing subspaces}. An isometry $T$ on Hilbert space $\mathcal{H}$ is called a pure isometry if $\bigcap_{n=0}^\infty T^n \mathcal{H}=\{0\}$. A reducing subspace $\mathcal{M}$ for $T$ is said to be a pure isometry reducing subspace if $T|_{\mathcal{M}}$ is a pure isometry. Since the subspace $\mathcal{S}_1$ is $T_z$-invariant, thus $S_z|_{\mathcal{S}_1}=T_z|_{\mathcal{S}_1}$ is a pure isometry, and hence each Agler reducing subspace $\mathcal{S}_1$ is a pure isometry reducing subspace for $S_z$. However, there  exists inner function $\theta$ such that $S_z$ on $\mathcal{K}_{\theta}$ has nontrivial pure isometry reducing subspaces but no nontrivial Agler reducing subspaces. Further, we proved that for an inner function $\theta$, $S_z$ on $\mathcal{K}_{\theta}$ has a nontrivial pure isometry reducing subspace if and only if $\theta$ has a nonconstant one-variable inner factor $\psi(w)$ (see Theorem 2.1, \cite{ZYL}). It is worth noting that there are many inner functions such that $S_z$ on $\mathcal{K}_{\theta}$ is reducible but has no nontrivial pure isometry reducing subspaces. This leads to the following question.

\begin{question}\label{q2}
For an inner function $\theta$ on $\mathbb{D}^2$, when is   $S_{z}$ reducible on  $\mathcal{K}_{\theta}$?
\end{question}

For a polynomial $p(z,w)\in \mathbb{C}[z,w]$, let $deg_{z} ~p$, $deg_{w} ~p$ denote the degrees of $p$ in variable $z$ and $w$, respectively. Let $r(z,w)=\frac{q(z,w)}{p(z,w)}$ be a rational function in two variables, where $p$ and $q$ have no common factors. Define $deg_{z}~ r=\max\{deg_{z}~ p, deg_{z} ~q\}$, $deg_{w} ~r=\max\{deg_{w} ~p, deg_{w} ~q\}$, and define the degree of $r(z,w)$ by the tuple $(deg_{z} ~r, deg_{w} ~r)$. A rational inner function with degree $(m,n)$ is called as $(m,n)$-type. In \cite{YZL,ZYL}, we observe that the characteristic function of $S_z$ relates closely to analytic Toeplitz operators and then introduce the notion of a weakly reducing subspace for an operator valued function $\Theta(z)$. By establishing the relationship of the weak reducibility of the characteristic function and the reducibility of $S_z$,  we gave a complete characterization of  the reducibility of $S_z$ on $\mathcal{K}_{\theta}$ for $(n,1)$-type rational inner functions. However, this method is not suitable for the rational inner functions with high-degree. Even for the quotient module $\mathcal{K}_{\theta}$  induced by a $(2,2)$-type rational inner function, the classification of the reducing subspaces of $S_z$ is not fully understood. The present paper is a continuation of \cite{YZL,ZYL} and a series of  related works, such as \cite{BL,ChD,CoD,Yan2002,Yan2019}. Our aim is to study the reducibility of  the compressed shift operator $S_z$ on the Beurling type quotient modules  $\mathcal{K}_{\theta}$ induced by rational inner functions as well as on certain polynomial quotient modules via complex geometry approach.

Firstly, we calculate the spectrum and essential spectrum of $S_z$ on $\mathcal{K}_{\theta}$ and note that $S_z^*$ has rich point spectrum. The rich point spectrum structure of $S_z^*$ inspires us to promote the following question.
\begin{question}\label{question-CD}
When is $S_z^*$  a Cowen-Douglas operator on the Beurling type quotient modules $\mathcal{K}_{\theta}$?
\end{question}
In \cite{CoD}, it was shown that the reducibility of a Cowen-Douglas operator is equivalent to the reducibility of the associated kernel space vector bundle. In particular, if the kernel space vector bundle  is a line bundle, then the Cowen-Douglas operator is irreducible. In section 3, we prove that for $\mathcal{K}_{\theta}$ induced by  a rational inner function $\theta$ in $\mathbb{D}^2$, $S_z^*$ is in the Cowen-Douglas class if and only if the projection  of the zero set of every irreducible factor of $\theta$ onto the $z$-plane is connected in $\mathbb{D}$.  Moreover, we provide two interesting examples such that $S_z^*$ fail to be in Cowen-Douglas class, but one is irreducible (see Example \ref{counterexample}) and the another one  is reducible (see Example \ref{counterexample2}). By a further detailed observation,  we find that for general Beurling type quotient module  $\mathcal{K}_{\theta}$, compare to the vector bundle induced by a Cowen-Douglas operator, the kernel space vector bundle $E_{S_z^*}$ is always  a union of some Hermitian holomorphic vector bundles. Therefore by a slightly modification of the original definition of Cowen-Douglas operators,  we define  the generalized Cowen-Douglas operators $\mathfrak{B}_{\alpha}(\Omega)$ , which will be used for us to  study the reducibility of $S_z$.

\begin{definition}\label{general-definition}
Let $\Omega$ be a bounded open subset (not necessary to be connected) of $\mathbb{C}$ and $\mathcal{H}$ be a separable complex Hilbert space. Suppose
\[\Omega=\bigcup_{i=1}^N \Omega_i,\]
 for some $N\in \mathbb{Z}_+\bigcup \{\infty\}$  and  $\Omega_i$ are the distinct connected components of $\Omega$. Let $\alpha=(\alpha_i)_{i=1}^{N}$ be a sequence of positive integers.
The  generalized Cowen-Douglas class  $\mathfrak{B}_{\alpha}(\Omega)$ consists of the bounded liner operators $T$ on $\mathcal{H}$ with the following properties:
\begin{enumerate}

  \item $Ran (T-w)=\mathcal{H}$ for $w\in \Omega$;
  \item  $\bigvee_{w\in \Omega} Ker (T-w) = \mathcal{H}$;
  \item  $\dim Ker(T-w)=\alpha_i$ for any  $w\in \Omega_i$ and all $i$.
\end{enumerate}
In particular, if $N=1$, then the definition is exactly the  Cowen-Douglas operator class.
\end{definition}

For  an operator $T$ in  $\mathfrak{B}_{\alpha}(\Omega)$ and a connected component $\Omega_i$ of $\Omega$, let $(E_T(\Omega_i), \pi_i)$ denote the subbundle of the trivial bundle $\Omega_i \times \mathcal{H}$ defined by
\[ E_T(\Omega_i)=\left\{(w,x)\in \Omega_i \times \mathcal{H} :   x\in Ker(T-w)\right\}\]
 and $\pi_i(w,x)=w.$
Then $E_T(\Omega_i)$ is Hermitian holomorphic vector bundle over $\Omega_i$ of dimension $\alpha_i$ (cf. \cite{CoD}), and we call it the kernel space vector bundle associated with operator $T$. Let $(E_T, \pi)$ be the  disjoint union of all subbundles $(E_T(\Omega_i), \pi_i)$, denoted by
\[ E_T= \bigsqcup_{i=1}^N E_T(\Omega_i). \]

For $\mathcal{K}_{\theta}$ induced by a rational inner function, we  show that $S_z^*$ is a generalized Cowen-Douglas operator  and establish the connection between the reducibility of $S_z$ and the reducibility of the kernel space vector bundle $E_{S_z^*}$.
 \begin{Thm}[Theorem \ref{4.7} and Theorem \ref{strictly-reducible}]
Suppose $\theta(z,w)$ is a rational inner function  without  univariate inner factors and   $\mathcal{K}_{\theta}=H^2(\mathbb{D}^2)\ominus \theta H^2(\mathbb{D}^2)$ is the corresponding Beurling type quotient module.  Let
 \[\Omega=\sigma(S_z)\setminus \sigma_e(S_z)=\bigcup_{i=1}^N \Omega_i,\]
 where $\Omega_i, i=1,\ldots, N$ are the connected components of $\Omega$,
 then $S_z^*$ belongs to $\mathfrak{B}_{\alpha}(\Omega^*)$ with index $\alpha=\{\alpha_i\}$, where $\alpha_i=Z_\theta|_{\Omega_i}$ is the numbers of zeros of $\theta(\lambda, \cdot)$ in $\mathbb{D}$, $\lambda\in \Omega_i$.  $S_z$ is reducible if and only if the kernel space vector bundle $E_{S_z^*}$ is strictly   reducible (the definition of ``strictly   reducible" see section 4 for more details).
\end{Thm}

The reminder of the paper is organized as follows. 
In Section 2, we   study the   spectrum and the essential spectrum of $S_z^*$ for the Beurling type quotient module. In Section 3, we   introduce the basic properties of the Cowen-Douglas operator and completely characterize  when is $S_z^*$    a Cowen-Douglas operator. In Section 4, we extend  the definition of the Cowen-Douglas operators to the generalized Cowen-Douglas operators.   Some elementary   geometric  properties and   applications are obtained. We show that $S_z^*$ on $\mathcal{K}_{\theta}$ induced by a rational inner function is  a generalized Cowen-Douglas operator  and  $S_z$ is reducible if and only if $E_{S_z^*}$ is strictly   reducible. In section 5, we define the divisor property for polynomials and  generalize our results to polynomial quotient modules, we conclude that $S_z^*$ on general polynomial quotient module is also a generalized Cowen-Douglas operator and $S_z$ is reducible if and only if $E_{S_z^*}$ is strictly   reducible. Moreover, by using the complex geometric approach,  we  study the reducibility of the compressed shift operators on  polynomial quotient modules $[z^m-w^n]^\perp$ and $[(z-w)^n]^\perp$.

\section{The spectrum of the compressed shift $S_z$ }
In this section, we  will study the spectrum of $S_z$ on the Beurling type quotient module $\mathcal{K}_{\theta}=H^2(\mathbb{D}^2)\ominus \theta H^2(\mathbb{D}^2)$.    To describe the spectrum of $S_z$, we first introduce the characteristic function of a completely nonunitary (c.n.u.) contractive operator.

 For a completely nonunitary contraction $T$ on the Hilbert space $\mathcal{H}$, one can associate with it two defect operators $D_T=(I-T^*T)^{1/2}$, $D_{T^*}=(I-TT^*)^{1/2}$, and two defect spaces $\mathcal{D}_T$ and $\mathcal{D}_{T^*}$ which are the closure of the range $D_T$ and $D_{T^*}$ respectively. The operator-valued analytic function
\[ \Theta_{T}(\lambda) =[-T+\lambda D_{T^*}(1-\lambda T^*)^{-1} D_T]|_{\mathcal{D}_T},~~~~\lambda \in \mathbb{D} \]
is called the  characteristic  function  for $T$. In functional model theory,  the defect operators and the characteristic  functions are very useful tools in determining the structure of contractions (cf. \cite{NF}).

In the following, $H_{w}^2$ will denote  the Hardy space in variable $w$, which can be regarded as a subspace of $H^2(\mathbb{D}^2)$.   For $\lambda \in \mathbb{D}$, the left evaluation operator  $L(\lambda):H^2(\mathbb{D}^2)\rightarrow H_{w}^2$ is  defined by
\[L(\lambda)h(z,w)=h(\lambda,w), ~~h\in H^2(\mathbb{D}^2)\]
The restriction of $L(\lambda)$  to a closed subspace $K\subset H^2(\mathbb{D}^2)$ will be denoted by $L_K(\lambda)$. By \cite{Yan2002}, there are constant unitaries $U,V,W$ such that
\begin{equation}\label{2.1}
  L(\lambda)|_{M\ominus zM}=(U\Theta_{S_z}(\lambda)V)\oplus W.
\end{equation}
 It is not hard to see  that $W=0$ if and only if $M$ (or equivalently, $M\ominus zM$) does not contain nonzero functions which  depend only on   $w$. In particular, if $M=\theta H^2(\mathbb{D}^2)$ for some inner function $\theta$, then $W=0$ if and only if $\theta(z,w)$ is not an inner function depending only on $w$.

 Recalled that a operator $T$ is called a Fredholm operator if $Ran T$ is closed and $Ker T, Ker T^*$ are both finite dimensional. For a Fredholm operator $T$, we define the index of $T$ as $ind(T)=\dim Ker T-\dim Ker T^*$. For a point $\lambda$, it said to be essential spectrum point of $T$ if $T-\lambda$ is not a Fredholm operator. The set consisting of all essential spectrum point of $T$ is called as the essential spectrum $\sigma_e(T)$ of $T$.    The following lemma is crucial for us to study the spectrum and essential spectrum of $S_z$.
\begin{lemma}[\cite{DY}, Corollary 2.3]\label{Yang}
If $M$ is a $z$-invariant subspace of $H^2(\mathbb{D}^2)$, then
\[\sigma(S_{z})=\sigma (L),\]
where $\sigma (L)$ is the set of points $\lambda\in \mathbb{D}$ for which the left evaluation operator $L(\lambda)$ is not bounded invertible from $M\ominus zM$ to $H^2_{w},$ together with those $\lambda \in \mathbb{T}$ not lying on any of the open arcs of $\mathbb{T}$ on which $L(\lambda)$ is a unitary operator valued analytic function of $\lambda$.

Moreover, for $\lambda\in \mathbb{D}$, $S_z-\lambda$ is Fredholm if and only if $L_{M\ominus zM}(\lambda): M\ominus zM \longrightarrow H^2_w$ is Fredholm, and in this case, $ind(S_z-\lambda)=ind(L_{M\ominus zM}(\lambda))$.
\end{lemma}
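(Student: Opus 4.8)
Throughout write $\mathcal{N}=H^{2}(\mathbb{D}^{2})\ominus M$ and $E=M\ominus zM$, so that the operator in question is $S_{z}=P_{\mathcal{N}}T_{z}|_{\mathcal{N}}$ and $L_{E}(\lambda):=L(\lambda)|_{M\ominus zM}$ is the restricted left evaluation appearing in the statement. Since $\bigcap_{n\ge 0}z^{n}H^{2}(\mathbb{D}^{2})=\{0\}$, the isometry $T_{z}|_{M}$ is pure, hence $M=\overline{\bigoplus_{n\ge 0}z^{n}E}$; and since $M$ is $T_{z}$-invariant, $\mathcal{N}$ is $T_{z}^{*}$-invariant and $S_{z}^{*}=T_{z}^{*}|_{\mathcal{N}}$. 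The plan is to transport every spectral property of $S_{z}$ to the corresponding property of $L_{E}(\lambda)$ through the identity \eqref{2.1}: there $U,V,W$ are \emph{constant} unitaries, so $L_{E}(\lambda)=(U\Theta_{S_{z}}(\lambda)V)\oplus W$ has exactly the same invertibility, the same Fredholm index, and the same behaviour under analytic continuation with unitary values across arcs of $\mathbb{T}$ as $\Theta_{S_{z}}(\lambda)$ — the constant pieces being invisible to all three.

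For $\lambda\in\mathbb{D}$ I would first record two explicit isomorphisms, needing no model theory. If $f\in Ker(S_{z}-\lambda)$ then $(z-\lambda)f\in M$, and a one-line computation from $f\perp M$ gives $(z-\lambda)f\perp zM$, so in fact $(z-\lambda)f\in E$ and vanishes on $\{z=\lambda\}$; conversely, for $e\in Ker\,L_{E}(\lambda)$ the quotient $e/(z-\lambda)$ is holomorphic on $\mathbb{D}^{2}$, and a short telescoping estimate using $E\perp zM$ and $|\lambda|<1$ shows $e/(z-\lambda)\in\mathcal{N}$ and $(S_{z}-\lambda)\big(e/(z-\lambda)\big)=\lambda e/(z-\lambda)$. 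Thus $f\mapsto (z-\lambda)f$ is a bounded bijection from $Ker(S_{z}-\lambda)$ onto $Ker\,L_{E}(\lambda)$, with bounded inverse (here one uses that $T_{z}-\lambda$ is bounded below on $H^{2}(\mathbb{D}^{2})$). Likewise $S_{z}^{*}h=\bar\lambda h$ forces $h=g/(1-\bar\lambda z)$ with $g\in H^{2}_{w}$, and the identity $\langle g/(1-\bar\lambda z),z^{n}e\rangle=\bar{\lambda}^{n}\langle g,L_{E}(\lambda)e\rangle$ for $e\in E$ shows $h\in\mathcal{N}$ iff $g\perp Ran\,L_{E}(\lambda)$, so $g\mapsto g/(1-\bar\lambda z)$ is a bounded bijection from $Ker\,L_{E}(\lambda)^{*}$ onto $Ker(S_{z}^{*}-\bar\lambda)$. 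Consequently $\dim Ker(S_{z}-\lambda)=\dim Ker\,L_{E}(\lambda)$ and $\dim Ker(S_{z}^{*}-\bar\lambda)=\dim Ker\,L_{E}(\lambda)^{*}$ for all $\lambda\in\mathbb{D}$. Moreover, when $L_{E}(\lambda)$ is boundedly invertible the resolvent is explicit, $(S_{z}-\lambda)^{-1}g=(g+e)/(z-\lambda)$ with $e=-L_{E}(\lambda)^{-1}\big(g(\lambda,\cdot)\big)\in E$ (again checked by telescoping, with norm control); and when $L_{E}(\lambda)$ is injective but not bounded below, choosing $\|e_{k}\|=1$ with $L_{E}(\lambda)e_{k}\to 0$, the vectors $g_{k}=(e_{k}-e_{k}(\lambda,\cdot))/(z-\lambda)$ lie in $\mathcal{N}$, have $\|g_{k}\|$ bounded below, and satisfy $(S_{z}-\lambda)g_{k}=-P_{\mathcal{N}}\big(e_{k}(\lambda,\cdot)\big)\to 0$, so $S_{z}-\lambda$ is not invertible. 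This gives $\sigma(S_{z})\cap\mathbb{D}=\{\lambda\in\mathbb{D}:L_{E}(\lambda)\text{ not boundedly invertible}\}$; the kernel identities give $ind(S_{z}-\lambda)=ind\,L_{E}(\lambda)$ once both sides are Fredholm, and that $S_{z}-\lambda$ is Fredholm iff $L_{E}(\lambda)$ is follows from \eqref{2.1} and the corresponding fact for the characteristic function.

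For the part on $\mathbb{T}$: since $S_{z}$ is a contraction, $\sigma(S_{z})\subseteq\overline{\mathbb{D}}$, so only $\sigma(S_{z})\cap\mathbb{T}$ is left. One checks that $S_{z}$ is completely nonunitary: if $\mathcal{R}\subseteq\mathcal{N}$ reduces $S_{z}$ with $S_{z}|_{\mathcal{R}}$ unitary, the isometry condition forces $z\mathcal{R}\subseteq\mathcal{N}$, hence $S_{z}|_{\mathcal{R}}=T_{z}|_{\mathcal{R}}$, and surjectivity forces $\mathcal{R}=z\mathcal{R}$, whence $\mathcal{R}\subseteq\bigcap_{n}z^{n}H^{2}(\mathbb{D}^{2})=\{0\}$. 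Then, by the Sz.-Nagy--Foias description of the spectrum on the circle \cite{NF}, a point $\zeta\in\mathbb{T}$ is in the resolvent set of $S_{z}$ precisely when $\Theta_{S_{z}}$ admits an analytic continuation, with unitary values, to an open arc of $\mathbb{T}$ containing $\zeta$; transporting this through \eqref{2.1} (the constant unitary pieces being transparent to analyticity and unitarity), this is exactly the condition that $L_{E}(\lambda)$ be a unitary-operator-valued analytic function of $\lambda$ on such an arc. Hence $\sigma(S_{z})\cap\mathbb{T}$ is the complement in $\mathbb{T}$ of the union of these arcs, and combining with the previous paragraph yields $\sigma(S_{z})=\sigma(L)$ together with the Fredholm/index equivalence.

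I expect the main obstacle to be the uniform treatment of the boundary: one needs the sharp Sz.-Nagy--Foias statement on where the spectrum fails to meet $\mathbb{T}$, and in particular the equivalence, for a purely contractive characteristic function, between possessing a \emph{boundedly invertible} analytic continuation and possessing a \emph{unitary-valued} analytic continuation across an arc of $\mathbb{T}$ — these agree via the reflection $\Theta(\lambda)^{-1}=\Theta(1/\bar\lambda)^{*}$, but the argument must be made precise. A related point is that, for $\lambda\in\mathbb{D}$, the transfer of the \emph{closed-range} property between $S_{z}-\lambda$ and $L_{E}(\lambda)$ (needed for the Fredholm equivalence) is cleanest through the model rather than by manipulating $1/(z-\lambda)$ directly, since division by $z-\lambda$ interacts badly with the relevant infima. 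Everything else is bookkeeping; indeed the lemma is a corollary of the spectral theorem of \cite{DY}, whose proof runs along these lines.
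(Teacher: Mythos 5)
The paper does not prove this lemma; it states it as \cite[Corollary 2.3]{DY} and uses it as a black box, so there is no in-paper argument to compare against. That said, your reconstruction is a sound route and is plausibly close in spirit to the Douglas--Yang original: transfer spectral and Fredholm data from $S_z$ to $L_E(\lambda)$ through the constant-unitary identity \eqref{2.1}, handle $\lambda\in\mathbb{D}$ by the explicit kernel and cokernel correspondences $f\mapsto(z-\lambda)f$ and $g\mapsto g/(1-\bar\lambda z)$ plus the explicit resolvent, and handle $\lambda\in\mathbb{T}$ via the Sz.-Nagy--Foias characterization of the circle spectrum of a c.n.u.\ contraction in terms of unitary-valued analytic continuation of $\Theta_T$. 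The telescoping identity $\langle g,m\rangle=\lambda^n\langle g,z^n m\rangle$ (from $(z-\lambda)g\perp zM$) and the bound $\|h/(z-\lambda)\|\le (1-|\lambda|)^{-1}\|h\|$ for $h(\lambda,\cdot)=0$ are exactly the right elementary ingredients, and the c.n.u.\ verification via $z\mathcal{R}=\mathcal{R}\subseteq\bigcap_n z^nH^2=\{0\}$ is correct.

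One slip that must be fixed: you assert $(S_z-\lambda)\bigl(e/(z-\lambda)\bigr)=\lambda\, e/(z-\lambda)$. The correct computation is $(S_z-\lambda)\bigl(e/(z-\lambda)\bigr)=P_{\mathcal{N}}\bigl((z-\lambda)\cdot e/(z-\lambda)\bigr)=P_{\mathcal{N}}(e)=0$, since $e\in M$; as written, your formula would place $e/(z-\lambda)$ \emph{outside} $Ker(S_z-\lambda)$ and break the bijection you are trying to establish. The surrounding text makes clear this is a typo, but it should be corrected. A smaller point: you worry that the closed-range transfer must pass through the model because ``division by $z-\lambda$ interacts badly with the relevant infima,'' but in fact a direct argument works: if $\|f_n\|=1$, $f_n\perp Ker(S_z-\lambda)$ and $(S_z-\lambda)f_n\to 0$, then $e_n:=P_E\bigl((z-\lambda)f_n\bigr)$ satisfies $L_E(\lambda)e_n\to 0$, one splits $e_n=k_n+k_n^\perp$ against $Ker\,L_E(\lambda)$, uses boundedness below of $L_E(\lambda)$ off its kernel to get $k_n^\perp\to 0$, and then $f_n=k_n/(z-\lambda)+\bigl(k_n^\perp+(S_z-\lambda)f_n\bigr)/(z-\lambda)$ with the first term in $Ker(S_z-\lambda)$ and the second tending to $0$, contradicting $\|f_n\|=1$. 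So you do not need to cite the closed-range statement from the model if you prefer not to, although citing it from \cite{NF} is also perfectly acceptable.
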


\begin{lemma}\label{circle+contain+spectrum}
Let   $\mathcal{K}_{\theta}=H^2(\mathbb{D}^2)\ominus \theta H^2(\mathbb{D}^2)$ be the Beurling type quotient module.  If $\theta(z,w)$  is  an inner function not depending only on  variable $z$, then
\[\mathbb{T}\subset \sigma(S_z).\]
\end{lemma}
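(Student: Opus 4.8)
The plan is to use Lemma~\ref{Yang} to reduce the statement to a claim about the left evaluation operator on $M\ominus zM$, where $M=\theta H^2(\mathbb{D}^2)$. According to Lemma~\ref{Yang}, a point $\lambda\in\mathbb{T}$ belongs to $\sigma(S_z)$ unless $\lambda$ lies on an open arc of $\mathbb{T}$ on which $L(\lambda)$ restricted to $M\ominus zM$ is a unitary-operator-valued analytic function. So it suffices to show that there is \emph{no} such open arc; equivalently, the set of $\lambda\in\mathbb{T}$ where $L_{M\ominus zM}(\lambda)$ is unitary has empty interior in $\mathbb{T}$ (and then $\mathbb{T}\subset\sigma(S_z)$ follows since $\sigma(S_z)$ is closed).

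First I would invoke the decomposition \eqref{2.1}: $L(\lambda)|_{M\ominus zM}=(U\Theta_{S_z}(\lambda)V)\oplus W$ with $U,V,W$ constant unitaries, and recall from the discussion after \eqref{2.1} that since $\theta$ does not depend only on $w$, we have $W=0$. (If $\theta$ happened to depend only on $w$ the statement would still be in range but we are told it is not; here the relevant hypothesis is that $\theta$ does not depend only on $z$.) Thus on the arc-in-question $L_{M\ominus zM}(\lambda)$ unitary would force $\Theta_{S_z}(\lambda)$ to be a unitary operator for all $\lambda$ in that arc. Now the key point: if $\Theta_{S_z}$ were unitary on an open arc, then by the theory of characteristic functions (the purely-contractive analytic function $\Theta_{S_z}$ extends analytically across such an arc and its boundary values there are unitary), $S_z$ would have a unitary summand corresponding to that part of $\mathbb{T}$ — but $S_z$ on $\mathcal{K}_\theta$ is completely nonunitary, a contradiction. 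Alternatively, and perhaps more self-containedly, I would argue directly: $L_{M\ominus zM}(\lambda)$ unitary for all $\lambda$ on an open arc would make the evaluation $h\mapsto h(\lambda,\cdot)$ an isometry from $M\ominus zM$ onto $H^2_w$ for those $\lambda$, and one can then use the fact that $M\ominus zM=\theta H^2_w\ominus\{0\}$... — actually $M\ominus zM$ for $M=\theta H^2(\mathbb{D}^2)$ is not simply described, so I would instead exploit that $\dim(M\ominus zM)$ as a module, or rather compare ranks: surjectivity of $L_{M\ominus zM}(\lambda)$ onto all of $H^2_w$ together with $\mathcal{K}_\theta\neq 0$ must fail somewhere, and an open arc of unitarity is incompatible with $\theta$ genuinely depending on $z$.

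The cleanest route, which I would actually write up, is the first one: assemble (i) Lemma~\ref{Yang} to reduce to non-unitarity of $L_{M\ominus zM}(\lambda)$ on every open arc, (ii) equation \eqref{2.1} with $W=0$ to pass to $\Theta_{S_z}(\lambda)$, and (iii) the standard fact that the characteristic function of a c.n.u.\ contraction cannot be unitary on any open arc of $\mathbb{T}$ (equivalently, the residual/co-residual parts are trivial precisely because the contraction is c.n.u.), giving a contradiction, hence $\sigma(S_z)\cap\mathbb{T}$ has no exterior points and, being closed, equals $\mathbb{T}$. The main obstacle is step (iii): I need to be careful that ``$\Theta_{S_z}$ unitary on an arc'' is exactly the obstruction to $S_z$ being c.n.u., and to confirm that $S_z$ on $\mathcal{K}_\theta$ is indeed c.n.u.\ — this holds because $S_z=P_{\mathcal{K}_\theta}T_z|_{\mathcal{K}_\theta}$ and $\theta$ depends on $z$ forces $S_z$ to have no unitary part (any unitary summand would be a bilateral shift reducing summand inside $\mathcal{K}_\theta$, contradicting that $\mathcal{K}_\theta\subset H^2(\mathbb{D}^2)$). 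I would spell out that last point using the structure of $\mathcal{K}_\theta$, and also handle the degenerate possibility $W\neq 0$ is excluded by the stated hypothesis.
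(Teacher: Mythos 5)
There is a genuine gap, and it sits exactly at the step you yourself flagged as the main obstacle, namely (iii). The claim that ``the characteristic function of a c.n.u.\ contraction cannot be unitary on any open arc of $\mathbb{T}$'' is false. For a one-variable finite Blaschke product $B$, the compressed shift on $H^2(\mathbb{D})\ominus BH^2(\mathbb{D})$ is completely nonunitary, its characteristic function is $B$ itself, and $B$ extends analytically across all of $\mathbb{T}$ with unimodular values; correspondingly $\sigma(S_z)$ consists only of the zeros of $B$ and misses $\mathbb{T}$ entirely. The Sz.-Nagy--Foias theory says precisely that $\lambda\in\mathbb{T}$ is in the \emph{resolvent} set of a c.n.u.\ contraction exactly when $\Theta_T$ has a unitary analytic continuation across an arc containing $\lambda$ --- so unitarity of $\Theta_{S_z}$ on an arc is perfectly compatible with $S_z$ being c.n.u., and your contradiction never materializes. (The paper itself uses this the other way in Theorem \ref{spec}: when $\theta(z,w)=B(z)$ it concludes $\mathbb{T}\subseteq\rho(S_z)$ precisely because the unitary extension exists. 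This is why the hypothesis ``$\theta$ does not depend only on $z$'' is essential and must enter the argument in a function-theoretic way, not through abstract c.n.u.-ness.) A secondary slip: $W=0$ in \eqref{2.1} is equivalent to $\theta$ not depending only on $w$, which is \emph{not} what the lemma assumes, so you cannot discard $W$; but since you only need $L_{M\ominus zM}(\lambda)$ itself, this is avoidable.

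What actually closes the argument is the concrete identification $M\ominus zM=\theta H^2_w$ for $M=\theta H^2(\mathbb{D}^2)$: under the natural unitary $\theta f\mapsto f$, the evaluation $L(\lambda)|_{M\ominus zM}$ becomes the Toeplitz operator $T_{\theta(\lambda,\cdot)}$ on $H^2_w$. A Toeplitz operator is unitary only if its symbol is a unimodular constant, so a unitary analytic extension on an open arc $V_{\lambda_0}$ forces $\theta(\lambda,\cdot)\equiv c(\lambda)$ with $|c(\lambda)|=1$ for a.e.\ $\lambda\in V_{\lambda_0}$ (via Fatou radial limits). Writing $\theta(z,w)=\sum_{n\ge0}a_n(z)w^n$, each $a_n$ with $n>0$ then has vanishing boundary values on a set of positive measure, hence $a_n\equiv0$, i.e.\ $\theta$ depends only on $z$ --- the desired contradiction. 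Your reduction via Lemma \ref{Yang} is the right first move, but the heart of the proof has to be this symbol computation rather than a general fact about characteristic functions.
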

\begin{proof}
Since $\theta$ is an inner function, it is easy to see that for the submodule $M=\theta H^2(\mathbb{D}^2)$, $M\ominus zM=\theta H^2_{w}.$  If there exists $\lambda_0\in \mathbb{T}$ such that $\lambda_0 \notin \sigma(S_z)$, then by Lemma \ref{Yang}, there is an open arc of $\lambda_0$, say $V_{\lambda_0}$, such that the operator-valued function  $L_{M\ominus zM}(\lambda)$ has an unitary analytic extension on $V_{\lambda_0}$.

Define two unitary operators as follows:
\begin{equation}\nonumber
\begin{split}
 U :&~\theta H^{2}_{w} \rightarrow H^{2}_{w}\\
    &\theta(z,w) f(w) \mapsto f(w),
    \end{split}
\end{equation}
and $V: H^{2}_{w} \rightarrow H^{2}_{w} $ be the identity,   we have the following diagram:
\begin{equation*}
\CD
             \theta  H_w^2 @>L(\lambda)>> H_w^2 \\
             @V   \textrm{U} VV @V \textrm{V} VV  \\
             H_w^2 @>T_{\theta(\lambda,w)}>> H_w^2.
\endCD
\end{equation*}
It is not hard to check that
\begin{equation}\label{Toeplitz}
  T_{\theta(\lambda, w)} U =V L(\lambda)
\end{equation}

where $T_{\theta(\lambda,w)}$ be the analytic Toeplitz operators on $H^{2}_{w}$,
 which implies $T_{\theta(\lambda,w)}$ also has an unitary analytic extension on $V_{\lambda_0}$.  By the properties of Toeplitz operator, the boundary value of the symbol function $\theta(\lambda,w)$ must be a unimodular constant (depends on $\lambda$).
 Combining with the Fatou's lemma(see \cite{Gar}), we obtain  that the radial limits $\lim\limits_{r\rightarrow 1} \theta (r\lambda,w)=c(\lambda) $ with $|c(\lambda)|=1$ for almost all $\lambda \in V_{\lambda_0}$.  If we write $\theta(z,w)=\sum\limits_{n\geq0} a_n(z)w^n$ ,  then it implies that the boundary value of $a_n(z)$ vanishes almost everywhere on $V_\lambda$ for all $n>0$. As $a_n(z)\in H^2(\mathbb{D})$  and the Lebesgue measure of $V_\lambda$ is positive, it must have $a_n(z)\equiv0$ for all $n>0$. This is a contradiction, which proves that $\mathbb{T}\subset \sigma(S_z)$.
\end{proof}


A rational inner function is an inner function which is also a rational function.  By \cite{Rud} (W. Rudin, Theorem 5.2.5), every  rational inner function $\theta$ in $\mathbb{D}^2$ has the form
\[\theta(z,w)=z^k w^l \frac{\widetilde{p}(z,w)}{p(z,w)},\]
where $p$ is a polynomial with no zero in $\mathbb{D}^2$, and $\widetilde{p}$ is the reflection of $p$ with respect to $\mathbb{T}^2$, which is defined by
\[\widetilde{p}(z,w)= z^m w^n \overline{p(1/\bar{z}, 1/{\bar{w}})},\]
where  $\deg p=(m,n)$.
Since two polynomials $p, q$ are coprime if and only if the common zero set of $p,q$ is finite (see \cite{Bli}) and $|p|=|\widetilde{p}|$ on $\mathbb{T}^2$, we can always assume that  the denominator of a rational inner function has at most finitely many zeros  on $\mathbb{T}^2$.

The following lemma shows that $p$  has no zeros on $(\mathbb{D}\times \mathbb{T})\cup (\mathbb{T}\times \mathbb{D})$.
\begin{lemma}[\cite{Kne}, Lemma 3.5]
Suppose $q\in \mathbb{C}[z,w]$ has no zeros on $\mathbb{D}^2$. If $q(z_0,w_0)=0$ for some $(z_0,w_0)\in \mathbb{T}\times \mathbb{D}$, then $q(z_0,w)=0$ for all $w\in \mathbb{C}$, i.e. $z-z_0$ divides $q$. In particular, there can  be  only finitely many $z_0\in \mathbb{T}$ such that $q(z_0, \cdot)$ has a zero in $\mathbb{D}$.
\end{lemma}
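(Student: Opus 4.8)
The plan is to reduce everything to Hurwitz's theorem, which says that a locally uniform limit of zero-free holomorphic functions on a domain is either zero-free or identically zero. First I would record the easy reductions. If $q\equiv 0$ then $q$ vanishes on all of $\mathbb{D}^2$, contrary to hypothesis, so $q$ is a nonzero polynomial and $\deg_z q<\infty$. For each fixed $z\in\mathbb{D}$, consider the slice $q(z,\cdot)\in\mathbb{C}[w]$: it is not the zero polynomial, because $q(z,0)\neq 0$ (as $(z,0)\in\mathbb{D}^2$), and it has no zero in $\mathbb{D}$, because a zero $w'\in\mathbb{D}$ would give $q(z,w')=0$ with $(z,w')\in\mathbb{D}^2$. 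Hence every interior slice $q(z,\cdot)$, $z\in\mathbb{D}$, is a nonzero holomorphic function on $\mathbb{D}$ with no zeros there.

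Next I would transport the zero from the interior to the boundary point $z_0$. Choose a sequence $z_n\in\mathbb{D}$ with $z_n\to z_0$. Writing $q(z,w)=\sum_{j} c_j(z)\,w^j$ with $c_j\in\mathbb{C}[z]$, we have $c_j(z_n)\to c_j(z_0)$, so $q(z_n,\cdot)\to q(z_0,\cdot)$ uniformly on compact subsets of $\mathbb{C}$, in particular locally uniformly on $\mathbb{D}$. Each $q(z_n,\cdot)$ is holomorphic and zero-free on $\mathbb{D}$ by the previous step, so Hurwitz's theorem forces the limit $q(z_0,\cdot)$ to be either identically zero on $\mathbb{D}$ or zero-free on $\mathbb{D}$. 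The hypothesis $q(z_0,w_0)=0$ with $w_0\in\mathbb{D}$ excludes the second possibility, so $q(z_0,\cdot)\equiv 0$ on $\mathbb{D}$; being a polynomial in $w$, it then vanishes for every $w\in\mathbb{C}$.

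To finish, divide $q$ by the monic polynomial $z-z_0$ in $\mathbb{C}[w][z]$: this yields $q(z,w)=(z-z_0)\,r(z,w)+s(w)$ with $r\in\mathbb{C}[z,w]$ and $s(w)=q(z_0,w)$; since $s\equiv 0$ we get $q=(z-z_0)\,r$, i.e.\ $z-z_0\mid q$. For the last assertion, any $z_0\in\mathbb{T}$ with $q(z_0,\cdot)$ having a zero in $\mathbb{D}$ yields, by what we just proved, a linear factor $z-z_0$ of $q$; distinct such $z_0$ give distinct, hence coprime, linear factors, and a nonzero polynomial of $z$-degree $\deg_z q$ has at most $\deg_z q$ of them, so there are only finitely many.

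The step that does the real work is the Hurwitz transport argument in the second paragraph: the difficulty is that $z_0$ lies on the boundary $\mathbb{T}$ of the unit disk, so one cannot reason about the slice at $z_0$ directly, and the non-obvious point is that zero-freeness on $\mathbb{D}$ of the interior slices survives passage to the limit unless the slice degenerates entirely. An equivalent hands-on version of this step runs through Rouch\'e's theorem: if $q(z_0,\cdot)\not\equiv 0$, its zeros are isolated, so one can pick $\varepsilon>0$ with $\overline{D(w_0,\varepsilon)}\subset\mathbb{D}$ and $q(z_0,\cdot)$ nonvanishing on $\partial D(w_0,\varepsilon)$; continuity of $q$ then forces $q(z,\cdot)$ to retain a zero inside $D(w_0,\varepsilon)$ for all $z$ sufficiently near $z_0$, in particular for some $z\in\mathbb{D}$, contradicting that $q$ has no zeros on $\mathbb{D}^2$.
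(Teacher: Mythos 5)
Your proof is correct. The paper does not actually prove this statement — it is quoted verbatim as Lemma 3.5 of \cite{Kne} — so there is no internal proof to compare against; your Hurwitz argument (and the equivalent Rouch\'e variant you sketch) is the standard one and matches the argument in Knese's paper, and the finiteness step via counting distinct linear factors $z-z_0$ of a polynomial of bounded $z$-degree is also sound.
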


In conclusion, every rational inner function $\theta$ in $\mathbb{D}^2$  has the form
\[ \theta =z^k w^l \frac{\widetilde{p}}{p}, \]
where $p$ has no zeros on $\overline{\mathbb{D}^2}\setminus \mathbb{T}^2$ and at most finitely many zeros on $\mathbb{T}^2$.
For a holomorphic function $f$ on a domain $\Omega$,  we denote its zero set in $\Omega$  by $Z_f$, and for a rational function $\theta=q/p$ ($p,q$ are coprime), $Z_\theta$ means $Z_q$ in $\mathbb{C}^2$.
\begin{theorem}\label{spec}
Suppose $\mathcal{K}_{\theta}=H^2(\mathbb{D}^2)\ominus \theta H^2(\mathbb{D}^2)$ is a Beurling type quotient module. If $\theta(z,w)$ is an inner function not depending only on variable $z$, then
\[ \overline{\pi_1(Z_\theta)}\cup\mathbb{T} \subseteq \sigma (S_z),\]
where  $\pi_1$ is the component projection defined by $\pi_1(z,w)=z$. Moreover, if $\theta$ is a rational inner function, then
\[\sigma(S_z)=\pi_1(Z_\theta \cap  \overline{\mathbb{D}^2}). \]
\end{theorem}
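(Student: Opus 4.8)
The plan is to establish the two inclusions separately, first the general statement $\overline{\pi_1(Z_\theta)}\cup\mathbb{T}\subseteq\sigma(S_z)$ for an arbitrary inner function not depending only on $z$, and then the reverse inclusion for rational inner functions. The containment $\mathbb{T}\subseteq\sigma(S_z)$ is already Lemma \ref{circle+contain+spectrum}, so the work for the first inclusion is to show $\overline{\pi_1(Z_\theta)}\subseteq\sigma(S_z)$. Since $\sigma(S_z)$ is closed, it suffices to show $\pi_1(Z_\theta)\subseteq\sigma(S_z)$, i.e. that for each $\lambda_0\in\mathbb{D}$ with $\theta(\lambda_0,w_0)=0$ for some $w_0\in\mathbb{D}$, we have $\lambda_0\in\sigma(S_z)$. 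By Lemma \ref{Yang} with $M=\theta H^2(\mathbb{D}^2)$, for which $M\ominus zM=\theta H^2_w$, the point $\lambda_0$ fails to be in $\sigma(S_z)$ only if $L(\lambda_0):\theta H^2_w\to H^2_w$ is bounded invertible. Using the commuting diagram and the relation \eqref{Toeplitz} from the proof of Lemma \ref{circle+contain+spectrum}, $L(\lambda_0)$ is unitarily equivalent (up to the identity $V$ and the unitary $U$) to the analytic Toeplitz operator $T_{\theta(\lambda_0,\cdot)}$ on $H^2_w$. But $\theta(\lambda_0,w_0)=0$ with $w_0\in\mathbb{D}$ means the symbol $\theta(\lambda_0,\cdot)$ has a zero inside $\mathbb{D}$, hence $T_{\theta(\lambda_0,\cdot)}$ is not surjective (its range is contained in the proper submodule of $H^2_w$ of functions vanishing at $w_0$), so $L(\lambda_0)$ is not invertible and $\lambda_0\in\sigma(S_z)$. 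This handles the first inclusion.

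For the rational case I would prove the two inclusions $\sigma(S_z)\subseteq\pi_1(Z_\theta\cap\overline{\mathbb{D}^2})$ and $\pi_1(Z_\theta\cap\overline{\mathbb{D}^2})\subseteq\sigma(S_z)$. The "$\supseteq$'' direction: by the structure theorem recalled just before the statement, write $\theta=z^kw^l\widetilde p/p$ with $p$ having no zeros on $\overline{\mathbb{D}^2}\setminus\mathbb{T}^2$ and finitely many on $\mathbb{T}^2$, so $Z_\theta\cap\overline{\mathbb{D}^2}$ consists of $Z_{z^kw^l}$ together with $Z_{\widetilde p}\cap\overline{\mathbb{D}^2}$. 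Points of $\pi_1(Z_\theta)$ lying in $\mathbb{D}$ are covered by the first inclusion already proved; points whose first coordinate lies on $\mathbb{T}$ are covered by $\mathbb{T}\subseteq\sigma(S_z)$ from Lemma \ref{circle+contain+spectrum} (note $\theta$ rational and not depending only on $z$ still satisfies that hypothesis). Hence $\pi_1(Z_\theta\cap\overline{\mathbb{D}^2})\subseteq\pi_1(Z_\theta)\cup\mathbb{T}\subseteq\sigma(S_z)$.

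The "$\subseteq$'' direction is the main obstacle and the place where rationality really enters. One must show that if $\lambda_0\in\overline{\mathbb{D}}$ is \emph{not} in $\pi_1(Z_\theta\cap\overline{\mathbb{D}^2})$, then $S_z-\lambda_0$ is invertible. For $\lambda_0\in\mathbb{D}$ this again goes through Lemma \ref{Yang} and the identification of $L(\lambda_0)$ with $T_{\theta(\lambda_0,\cdot)}$ on $H^2_w$: if $\theta(\lambda_0,\cdot)$ has no zeros in $\overline{\mathbb{D}}$ then, since $\theta(\lambda_0,\cdot)$ is a rational inner function of the single variable $w$ (a finite Blaschke product times a unimodular constant), having no zeros in $\overline{\mathbb{D}}$ forces it to be a unimodular constant, whence $T_{\theta(\lambda_0,\cdot)}$ is unitary and $\lambda_0\notin\sigma(S_z)$. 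The subtle part is to rule out $\lambda_0\in\mathbb{T}\setminus\pi_1(Z_\theta\cap\overline{\mathbb{D}^2})$; here I would use the second part of Lemma \ref{Yang} — that $\mathbb{T}$-points outside $\sigma(S_z)$ are exactly those on open arcs where $L(\lambda)$ extends to a unitary-valued analytic function — combined with Kne's Lemma recalled above (a denominator $p$ with no zeros in $\mathbb{D}^2$ having a zero at $(z_0,w_0)\in\mathbb{T}\times\mathbb{D}$ forces $z-z_0\mid p$, so $p(z_0,\cdot)\equiv 0$). Thus for $z_0\in\mathbb{T}$ not in the finite exceptional set, $\theta(z_0,\cdot)$ is a well-defined finite Blaschke product in $w$ whose zeros are precisely the $w$-fibers of $Z_\theta$ over $z_0$; if $z_0\notin\pi_1(Z_\theta\cap\overline{\mathbb{D}^2})$ then $\theta(z_0,\cdot)$ has no zeros in $\overline{\mathbb{D}}$ and hence is a unimodular constant, so $\theta(z,\cdot)$ extends analytically and unitarily across a neighborhood of $z_0$ on $\mathbb{T}$, giving such an arc and $z_0\notin\sigma(S_z)$. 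Assembling these cases and invoking the fact that $\sigma(S_z)\subseteq\overline{\mathbb{D}}$ (contractivity) and that $\pi_1(Z_\theta\cap\overline{\mathbb{D}^2})$ is compact yields the claimed equality; the only technical care needed is bookkeeping around the finitely many bad points $z_0\in\mathbb{T}$ where $p(z_0,\cdot)$ or $\widetilde p(z_0,\cdot)$ degenerates, but these are absorbed into $\mathbb{T}\cap\pi_1(Z_\theta\cap\overline{\mathbb{D}^2})$ and cause no loss.
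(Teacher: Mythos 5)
Your overall route is the paper's: reduce invertibility of $S_z-\lambda$ to invertibility of the analytic Toeplitz operator $T_{\theta(\lambda,\cdot)}$ via Lemma \ref{Yang} and the identification \eqref{Toeplitz}, then analyse the zeros of the slice $\theta(\lambda,\cdot)$. The genuine problem is your claim that for $\lambda_0\in\mathbb{D}$ the slice $\theta(\lambda_0,\cdot)$ is ``a rational inner function of the single variable $w$ (a finite Blaschke product times a unimodular constant).'' This is false: for $\lambda_0$ in the open disk one only has $|\theta(\lambda_0,w)|\le 1$ on $\mathbb{T}$, not $=1$ (e.g.\ $\theta(z,w)=\frac{zw-t}{1-tzw}$ gives $\theta(0,w)\equiv -t$, and for $0<|\lambda_0|<t$ the slice is a nonconstant, non-inner rational function with no zeros in $\overline{\mathbb{D}}$). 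Consequently your deduction ``no zeros in $\overline{\mathbb{D}}$ $\Rightarrow$ unimodular constant $\Rightarrow$ $T_{\theta(\lambda_0,\cdot)}$ unitary'' rests on a false premise. The conclusion you need is still true, and the correct justification is exactly the paper's: since $p$ has no zeros on $\overline{\mathbb{D}^2}\setminus\mathbb{T}^2$, the slice $\theta(\lambda_0,\cdot)$ lies in the disk algebra, so it has no zeros in $\overline{\mathbb{D}}$ if and only if it is bounded away from $0$ there, i.e.\ invertible in $H^\infty(\mathbb{D})$, i.e.\ $T_{\theta(\lambda_0,\cdot)}$ is invertible. (Only for $\lambda_0\in\mathbb{T}$, where $|\widetilde p|=|p|$ forces the slice to be a genuine finite Blaschke product, is your ``no zeros implies constant'' step valid.)

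Two further soft spots. In the ``$\supseteq$'' direction you dispose of points of $\pi_1(Z_\theta\cap\overline{\mathbb{D}^2})$ with first coordinate in $\mathbb{D}$ by citing the first inclusion, but that inclusion only covers zeros $(\lambda_0,w_0)$ with $w_0\in\mathbb{D}$; a zero with $\lambda_0\in\mathbb{D}$ and $w_0\in\mathbb{T}$ is not covered unless you additionally argue that the zero variety dips into $\mathbb{D}^2$ near such a point --- the paper avoids this entirely by observing that $\theta(\lambda_0,w_0)=0$ with $w_0\in\overline{\mathbb{D}}$ already puts $0$ in the closure of the range of the slice, so $T_{\theta(\lambda_0,\cdot)}$ is not invertible. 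Finally, the set $\mathbb{T}\setminus\pi_1(Z_\theta\cap\overline{\mathbb{D}^2})$ over which you labour is actually empty: using Knese's lemma the paper notes that $p(\lambda,\cdot)$ has no zeros in $\mathbb{D}$ for $\lambda\in\mathbb{T}$, hence by reflection $\widetilde p(\lambda,\cdot)$ has all its zeros in $\overline{\mathbb{D}}$, so $\mathbb{T}=\pi_1(Z_\theta\cap\mathbb{T}\times\overline{\mathbb{D}})$ and Lemma \ref{circle+contain+spectrum} finishes the boundary at once. Your arc-extension argument can be made rigorous and is logically consistent precisely because that set is empty, but it is an avoidable detour.
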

\begin{proof}

For $\lambda\in \mathbb{D}$,  by (\ref{2.1}) and  (\ref{Toeplitz}), we see that  $S_z-\lambda$ is not invertible if and only if $T_{\theta(\lambda, \cdot)}$ is not invertible, this is equivalent to $0\in \overline{range~\theta(\lambda,\cdot)}$, hence $\pi_1(Z_\theta)\subseteq \sigma(S_z)$ and this proves the first part of the theorem.

Now suppose that  $\theta=z^k w^l \frac{\widetilde{p}}{p}$ is a  rational inner function. Since $p$ has no zeros on $\overline{\mathbb{D}^2}\setminus \mathbb{T}^2$, $\theta(\lambda,\cdot)$ belongs to the disk algebra for any $\lambda\in \mathbb{D}$, hence $\overline{range~\theta(\lambda,\cdot)}= range~ \theta(\lambda, \cdot)|_{\overline{\mathbb{D}}}$. So for $\lambda\in \mathbb{D}$, $S_z-\lambda$ is not invertible if and only if $\theta(\lambda, \cdot)$ has zeros in $\overline{\mathbb{D}}$, i.e.
  \[ \mathbb{D}\cap \sigma(S_z)=\pi_1(Z_\theta \cap \mathbb{D}\times \overline{\mathbb{D}}).\]

  For $\lambda\in \mathbb{T}$, if $\theta(z,w)$ is  a rational inner function not depending only on variable  $z$, write $\theta(z)=B(z) \frac{\widetilde{p}}{p}$, where $p$ has no factors depending only on variable $z$ and $B(z)$ is a finite Blaschke product, as $p$ has no zeros on $\mathbb{T}\times \mathbb{D}$, we obtain that for every $\lambda\in \mathbb{T}$,  $p(\lambda,\cdot)$ has no zeros in $\mathbb{D}$, thus $\widetilde{p}(\lambda,\cdot)$ must have zeros in $\overline{\mathbb{D}}$, which means that  $$\mathbb{T}= \pi_1(Z_\theta \cap \mathbb{T}\times \overline{\mathbb{D}}).$$
  Then combine Lemma \ref{circle+contain+spectrum}, we obtain  the desired conclusion. On the other hand, if $\theta(z,w)=B(z)$ is finite Blaschke product depending only on $z$, then $M\ominus zM=B(z)H_w^2$, and therefore $L_{M\ominus zM}(\lambda)$ has a natural unitary analytic extension on $\mathbb{T}$, hence $\mathbb{T}\subseteq \rho(S_z)$, where $\rho(S_z)$ denote the resolvent set of $S_z$. So we also have
  \[\sigma(S_z)=\pi_1(Z_\theta \cap  \overline{\mathbb{D}^2} ).\]
\end{proof}

As one known, the inner-outer factorization does not always exist for the function in $H^2(\mathbb{D}^n) (n>1)$, that makes many useful   techniques in $H^2(\mathbb{D})$ relating  to the inner-outer factorization cannot generalize to the multivariable Hardy space.  However, the role of the Blaschke products is taken over by the good inner functions(\cite{Rud}).

An inner function $g$ in $\mathbb{D}^n$ is said to be good if $u[g]=0$, where $u[g]$ is the least $n$-harmonic majorant of $\log |g|$ in $\mathbb{D}^n$.
 For $n=1$, the good inner functions are precisely the Blaschke products.
If $f_1$ and $f_2$ are holomorphic functions in $\mathbb{D}^n$, the statement ``$f_1$ and $f_2$ have same zeros" will mean: ``$f_1=hf_2$, $h$ is holomorphic in $\mathbb{D}^n$, and $h$ has no zero in $\mathbb{D}^n$", i.e., multiplicities are taken into account. In this sense, the good inner functions are determined by their zeros in $\mathbb{D}^n$ (see \cite{Rud}).   In particular, a  rational inner function is  a good inner function.

The following factorization theorem will be used frequently.
\begin{lemma}[\cite{Rud},  Theorem 5.4.2] \label{Rudin}
 If $0<p<\infty$ and $f\in H^p(\mathbb{D}^n)$, $g$ is a good inner function, $h$ is holomorphic in $\mathbb{D}^n$, and $f=gh$, then $h\in H^p(\mathbb{D}^n)$, and $\|h\|_p=\|f\|_p$.
\end{lemma}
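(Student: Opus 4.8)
The plan is to deduce the norm identity from the inclusion $h\in H^p(\mathbb{D}^n)$ and to isolate that inclusion as the only real content. Since $g$ is inner, $|g|=1$ a.e. on $\mathbb{T}^n$, so once $h\in H^p$ its radial boundary values satisfy $|h^*|=|f^*|/|g^*|=|f^*|$ a.e., whence $\|h\|_p=\|h^*\|_{L^p(\mathbb{T}^n)}=\|f^*\|_{L^p(\mathbb{T}^n)}=\|f\|_p$. One inequality is free: $|f|=|g||h|\le|h|$ on $\mathbb{D}^n$ gives $\|f\|_p\le\|h\|_p$. So everything reduces to proving $h\in H^p(\mathbb{D}^n)$, equivalently $\|h\|_p\le\|f\|_p$.

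First I would show that $h$ lies in the Nevanlinna class $N(\mathbb{D}^n)$. Being holomorphic, $h$ has $\log^+|h|$ $n$-subharmonic, and from $|h|=|f|/|g|$ with $|g|\le 1$ we get the pointwise bound $\log^+|h|\le\log^+|f|+\log(1/|g|)=\log^+|f|-\log|g|$. Integrating over $\mathbb{T}^n$ against normalized Lebesgue measure $dm$, the $\log^+|f|$ term stays bounded because $f\in H^p\subset N(\mathbb{D}^n)$, while $\int_{\mathbb{T}^n}(-\log|g(r\zeta)|)\,dm(\zeta)$ decreases to $0$ as $r\to 1$ (after factoring out a monomial if $g(0)=0$) — this is precisely the hypothesis that $g$ is good, since $u[g]\equiv 0$ forces $\lim_{r\to 1}\int_{\mathbb{T}^n}\log|g(r\zeta)|\,dm(\zeta)=u[g](0)=0$. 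Hence $\sup_{0\le r<1}\int_{\mathbb{T}^n}\log^+|h(r\zeta)|\,dm(\zeta)<\infty$, so $h\in N(\mathbb{D}^n)$, and in particular $h$ has radial boundary values $h^*$ a.e.

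The hard part is upgrading $h\in N(\mathbb{D}^n)$ to $h\in H^p(\mathbb{D}^n)$: on the polydisc there is no inner–outer factorization, and membership in $N(\mathbb{D}^n)$ together with $h^*\in L^p(\mathbb{T}^n)$ does not by itself force $h\in H^p$. I would route through the Smirnov class $N_*(\mathbb{D}^n)$ and invoke the polydisc identity $H^p(\mathbb{D}^n)=\{F\in N_*(\mathbb{D}^n):F^*\in L^p(\mathbb{T}^n)\}$. To place $h$ in $N_*$, use again $\log^+|h|\le\log^+|f|-\log|g|$: the family $\{\log^+|f(r\cdot)|\}_r$ is uniformly integrable on $\mathbb{T}^n$ because $f\in H^p\subset N_*(\mathbb{D}^n)$, and $\{-\log|g(r\cdot)|\}_r$ is uniformly integrable because $g$ is a good inner function (here $-\log|g|$ is an $n$-potential whose dilates have $L^1$-norms tending to $0$ and vanishing radial limits a.e.). Uniform integrability of the dominating family passes to $\{\log^+|h(r\cdot)|\}_r$, which is exactly $h\in N_*(\mathbb{D}^n)$; combined with $h^*\in L^p(\mathbb{T}^n)$ this gives $h\in H^p(\mathbb{D}^n)$, and the first paragraph then yields $\|h\|_p=\|f\|_p$.

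As an alternative to the Smirnov-class step, one can slice: a good inner function on $\mathbb{D}^n$ restricts, for almost every fixing of the remaining $n-1$ boundary coordinates, to a Blaschke product in the free variable, so one reduces to the classical one–variable division theorem (itself proved via partial Blaschke products and Fatou's lemma) and reassembles $h\in H^p(\mathbb{D}^n)$ by an iterated Fubini/monotone-convergence argument over the coordinates. Either way, the crux — and the step I expect to be the main obstacle — is the implication that if $h$ is holomorphic and $f=gh\in H^p$ then $h\in H^p$: this is exactly where ``good'' cannot be weakened to ``inner'' (a nontrivial singular inner factor already breaks it for $p=\infty$), and where the polydisc Nevanlinna/Smirnov machinery must substitute for the inner–outer factorization available when $n=1$.
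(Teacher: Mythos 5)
This lemma is not proved in the paper at all: it is quoted verbatim from Rudin's book (Theorem 5.4.2 of \emph{Function theory in polydiscs}) and used as a black box, so there is no internal argument to compare yours against. Judged on its own, your reconstruction follows the standard Nevanlinna--Smirnov route and is essentially sound. The reduction in your first paragraph is correct, and the key place where ``good'' enters is correctly identified: $u[g]=0$ is equivalent to $\int_{\mathbb{T}^n}\log|g(r\zeta)|\,dm(\zeta)\uparrow 0$ as $r\to 1$ (the integrals are nondecreasing in $r$ by $n$-subharmonicity and their limit is the value of the least $n$-harmonic majorant at the origin), and the pointwise domination $\log^+|h|\le \log^+|f|-\log|g|$ then controls $h$. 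The two imported facts you lean on --- $\|F\|_p=\|F^*\|_{L^p(\mathbb{T}^n)}$ for $F\in H^p(\mathbb{D}^n)$, and the polydisc Smirnov characterization $H^p=\{F\in N_*: F^*\in L^p\}$ --- are genuine theorems from the same source, so this is a fair amount of outsourcing for a lemma that is itself outsourced.

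Two soft spots. First, the uniform integrability of $\{-\log|g(r\cdot)|\}_{0\le r<1}$: your justification (integrals tending to $0$ plus a.e.\ vanishing radial limits) only handles $r$ near $1$; Rudin's definition of $N_*$ requires uniform integrability of the whole family, and for $r$ bounded away from $1$ the functions $-\log|g(r\cdot)|$ can be unbounded at the zeros of $g$. This is repairable (the global integrals are monotone in $r$, and logarithmic singularities are locally uniformly integrable), but as written the step is asserted rather than proved; you should also say explicitly that you establish $h\in N_*$ first so that $h^*$ exists a.e.\ before you use $h^*\in L^p$. Second, the ``alternative'' slicing argument should be dropped or heavily qualified: the restriction of a good inner function to an a.e.\ boundary fiber in $n-1$ variables is not even well defined a priori, and the claim that such slices are Blaschke products is exactly the kind of statement that needs the full machinery you are trying to avoid.
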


In order to study the essential spectrum of $S_z$,  we first consider the kernel space and cokernel space of $S_{z-\lambda}$. For convenience, we use $\Omega^*$ to denote the complex conjugate of a set $\Omega$.
\begin{theorem}\label{kernel}
Suppose $\mathcal{K}_{\theta}=H^2(\mathbb{D}^2)\ominus \theta H^2(\mathbb{D}^2)$ is a Beurling type quotient module.  Assume that $\theta(z,w)=B(z) \phi(z,w)$ with $B(z)=\prod\limits_{i=1}^\infty \frac{z-\lambda_i}{1-\overline{\lambda_i}z}$  and $\phi(\lambda,\cdot) $ being not identical zero for any $\lambda\in \mathbb{D}$,  then the followings hold.

\begin{enumerate}
\setlength{\itemsep}{0pt}
\item  For   $\lambda\in \mathbb{T}$,  then
\[ Ker S_{z-\lambda} =Ker S_{z-\lambda}^*=\{0\}.\]

\item For $\lambda \in \mathbb{D}$,~~ if $B(\lambda)\neq 0$, then $Ker S_{z-\lambda}=\{0\}$, and  if $B(\lambda)=0$, then
\[Ker S_{z-\lambda}=\left\{\frac{\theta(z,w)g(w)}{z-\lambda} :~ g(w)\in H_w^2 \right\}.\]

\item  For $\lambda \in \mathbb{D}$, \[Ker S_{z-\lambda}^*=\left\{\frac{g(w)}{1-\overline{\lambda}z} : g(w)\in H_w^2\ominus \theta(\lambda,w)H^2_w\right\}.\]
\end{enumerate}
In particular, $\sigma_p(S_z)=\{ \lambda_i : i\geq 1\}$, $\pi_1(Z_\theta \cap \mathbb{D}^2)^* \subseteq \sigma_p(S_z^*)$.
\end{theorem}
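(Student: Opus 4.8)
The plan is to exploit the explicit description $\mathcal{K}_\theta = H^2(\mathbb{D}^2)\ominus\theta H^2(\mathbb{D}^2)$ together with the fact that $S_{z-\lambda} = P_{\mathcal{K}_\theta}T_{z-\lambda}|_{\mathcal{K}_\theta}$, and to analyze kernels and cokernels by lifting to $H^2(\mathbb{D}^2)$. For the cokernel, note that $f\in Ker\, S_{z-\lambda}^*$ iff $f\in\mathcal{K}_\theta$ and $T_{z-\lambda}^* f \perp \mathcal{K}_\theta$, i.e. $(\bar z-\bar\lambda)^{-1}$-type reasoning: concretely, $T_{\bar z}^* f = \overline\lambda f$ modulo $\theta H^2$, which after a standard manipulation says $(1-\overline\lambda z)^{-1}$ is the right building block. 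I would show directly that $g(w)/(1-\overline\lambda z)$ lies in $\mathcal{K}_\theta$ precisely when $g\perp \theta(\lambda,w)H^2_w$: the inner product of $g(w)/(1-\overline\lambda z)$ with $\theta h$ equals $\langle g(w), \theta(\lambda,w) h(\lambda,w)\rangle_{H^2_w}$ by the reproducing property of the Szegő kernel in $z$, and as $h$ ranges over $H^2(\mathbb{D}^2)$, $h(\lambda,\cdot)$ ranges over all of $H^2_w$; then one checks $T_{z-\lambda}^*\big(g(w)/(1-\overline\lambda z)\big)=0$, which is immediate since $(1-\overline\lambda z)^{-1}$ is (a multiple of) the reproducing kernel at $\lambda$ for $T_z^*$. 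This gives (3) and the inclusion $\pi_1(Z_\theta\cap\mathbb{D}^2)^*\subseteq\sigma_p(S_z^*)$, because $\theta(\lambda,w)H^2_w\subsetneq H^2_w$ exactly when $\theta(\lambda,\cdot)$ is not a unimodular constant, in particular when $\lambda\in\pi_1(Z_\theta\cap\mathbb{D}^2)$.

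For (2), I would argue that $f\in Ker\, S_{z-\lambda}$ with $\lambda\in\mathbb{D}$ means $(z-\lambda)f\in\theta H^2(\mathbb{D}^2)$, say $(z-\lambda)f=\theta k$. Since $\theta=B(z)\phi(z,w)$ is a good inner function (a product of a Blaschke product in $z$ and a rational inner $\phi$ — here I would invoke Lemma \ref{Rudin}), and $z-\lambda$ divides the left side, I need to track where the factor $z-\lambda$ can go. If $B(\lambda)\neq 0$, then $\theta(\lambda,w)$ does not vanish identically (using the hypothesis $\phi(\lambda,\cdot)\not\equiv 0$), so evaluating $(z-\lambda)f=\theta k$ at $z=\lambda$ forces $\theta(\lambda,w)k(\lambda,w)=0$, hence $k(\lambda,w)\equiv 0$, so $z-\lambda$ divides $k$; writing $k=(z-\lambda)k_1$ with $k_1\in H^2(\mathbb{D}^2)$ (division by $z-\lambda$ preserves $H^2$), we get $f=\theta k_1\in\theta H^2(\mathbb{D}^2)$, forcing $f=0$ since $f\in\mathcal{K}_\theta$. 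If $B(\lambda)=0$, write $B(z)=\frac{z-\lambda}{1-\overline\lambda z}B_1(z)$ (gathering the zero at $\lambda$; if it has higher multiplicity the argument iterates), so $\theta/(z-\lambda) = \frac{B_1(z)\phi(z,w)}{1-\overline\lambda z}$ is holomorphic and bounded on $\mathbb{D}^2$; one then checks $\theta(z,w)g(w)/(z-\lambda)\in\mathcal{K}_\theta$ for all $g\in H^2_w$ (it is in $H^2(\mathbb{D}^2)$ by Lemma \ref{Rudin} applied to $\theta$ times the Blaschke-type factor, and it is orthogonal to $\theta H^2$ because $\theta h \perp \theta H^2_w$-type computations reduce to $\langle g(w)/(z-\lambda), h\rangle$ which vanishes as $(z-\lambda)^{-1}$ hits the Szegő kernel), and conversely every element of the kernel has this form by the same divisibility analysis as above applied after peeling off the single factor $z-\lambda$. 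Combining, $\sigma_p(S_z)=\{\lambda_i: i\geq 1\}$.

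For (1), the case $\lambda\in\mathbb{T}$: $Ker\, S_{z-\lambda}=\{0\}$ follows because $(z-\lambda)$ is outer — more precisely, if $(z-\lambda)f=\theta k$ with $f\in\mathcal{K}_\theta$, then since $\theta$ is good and $z-\lambda$ has no zeros in $\mathbb{D}^2$, Lemma \ref{Rudin} (or rather the fact that $z-\lambda$ is invertible in a suitable sense off $\mathbb{T}^2$) combined with $\|f\|=\|k\|$ considerations forces $f\in\theta H^2$, hence $f=0$; alternatively, one observes $T_{z-\lambda}$ is injective with closed range on $H^2(\mathbb{D}^2)$ for $\lambda\in\mathbb{T}$ only in a limiting sense, so the cleaner route is: $f\in Ker\,S_{z-\lambda}$ implies $(z-\lambda)f\in\theta H^2$, and taking boundary values and using $|z-\lambda|$ bounded away from $0$ on $\{|z|=1\}$ minus a point together with the $L^2$-identity shows $f/\,$(unit) is already in $\theta H^2$. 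The cokernel statement $Ker\,S_{z-\lambda}^*=\{0\}$ for $\lambda\in\mathbb{T}$ follows from (3) by continuity/limiting: as $\lambda\to\mathbb{T}$ the space $H^2_w\ominus\theta(\lambda,w)H^2_w$ collapses because $\theta(\lambda,\cdot)$ becomes inner in $w$ of the relevant form; more directly, a nonzero element would be a bounded point-evaluation functional at a boundary point, which $H^2_w$ does not admit. The main obstacle I anticipate is the bookkeeping in (2) when $B$ has a zero of multiplicity $\geq 2$ at $\lambda$ and the simultaneous interaction with a possible vanishing of $\phi(\lambda,\cdot)$ at isolated points of $\mathbb{D}$ — handling the factor $z-\lambda$ cleanly there, and making sure the claimed kernel description (which only sees the $B$-zeros, not the $\phi$-zeros) is correct, requires care; the hypothesis $\phi(\lambda,\cdot)\not\equiv 0$ is exactly what rules out extra kernel elements coming from $\phi$.
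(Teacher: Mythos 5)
Your overall architecture tracks the paper's proof closely: in (3) you use the reproducing property in the $z$-variable to reduce $\langle g(w)/(1-\overline\lambda z),\theta h\rangle$ to $\langle g,\theta(\lambda,\cdot)h(\lambda,\cdot)\rangle_{H^2_w}$; in (2) you evaluate $(z-\lambda)f=\theta k$ at $z=\lambda$ to peel off a factor of $z-\lambda$ via Lemma~\ref{Rudin} applied to the one-variable Blaschke factor $\varphi_\lambda$ (legitimate, since $\varphi_\lambda$ \emph{is} good). The substantive difficulty is in your treatment of (1), and there the proposal has a genuine gap.

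For $\lambda\in\mathbb{T}$ and $Ker\,S_{z-\lambda}=\{0\}$, you write ``since $\theta$ is good and $z-\lambda$ has no zeros in $\mathbb{D}^2$, Lemma~\ref{Rudin} \dots forces $f\in\theta H^2$.'' But the theorem does \emph{not} assume $\theta$ is a good inner function: the only hypothesis is $\theta=B(z)\phi(z,w)$ with $B$ a one-variable Blaschke product and $\phi(\lambda,\cdot)\not\equiv 0$, and $\phi$ could well carry a singular part that makes $\theta$ not good. So you cannot quote Lemma~\ref{Rudin} for $\theta$ itself. Your ``cleaner route'' via boundary values is also not a proof as stated: $|z-\lambda|$ is not bounded away from zero on $\mathbb{T}$ (only off any neighborhood of $\lambda$), and knowing $|f|=|k/(z-\lambda)|$ a.e.\ on $\mathbb{T}^2$ does not by itself put the holomorphic function $k/(z-\lambda)$ into $H^2(\mathbb{D}^2)$ without first knowing it is in some $H^p$. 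The paper sidesteps all of this with an approximation argument that does not require $\theta$ to be good: since $z-\lambda$ is outer for $\lambda\in\mathbb{T}$, choose polynomials $p_n(z)$ with $(z-\lambda)p_n\to 1$ in $H^2(\mathbb{D})$; then $\theta k p_n\to f$ in $L^1(\mathbb{T}^2)$, and since $|\theta|=1$ a.e.\ the sequence $k p_n$ is Cauchy in $H^1(\mathbb{D}^2)$ with limit $F$, so $f=\theta F$; finally $f\in H^2$ and $|F|=|f|$ a.e.\ force $F\in H^2$, hence $f\in\theta H^2\cap\mathcal K_\theta=\{0\}$. This is the step your proposal is missing.

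Two smaller remarks. In (3), you only verify that the stated functions \emph{belong} to $Ker\,S_{z-\lambda}^*$; to get equality one should start from the eigenvalue equation. Because $\mathcal K_\theta$ is $T_z^*$-invariant, $S_z^*f=\overline\lambda f$ is literally $T_z^*f=\overline\lambda f$, i.e.\ $\frac{f(z,w)-f(0,w)}{z}=\overline\lambda f(z,w)$, which gives $f(z,w)=\frac{f(0,w)}{1-\overline\lambda z}$ outright (no ``modulo $\theta H^2$'' needed). This also dispatches the case $\lambda\in\mathbb{T}$ in (1): $\frac{f(0,w)}{1-\overline\lambda z}\in L^2(\mathbb{T}^2)$ forces $f(0,w)\equiv 0$ since $\frac{1}{1-\overline\lambda z}\notin H^2(\mathbb{D})$; your phrase ``bounded point-evaluation functional \dots which $H^2_w$ does not admit'' gestures at this but misattributes it to the $w$-variable. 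In (2), the worry about higher-order zeros of $B$ at $\lambda$ is unfounded: the kernel $\{\theta(z,w)g(w)/(z-\lambda):g\in H^2_w\}$ does not iterate with multiplicity, since the paper's orthogonality computation shows $k(z,w)\equiv k(\lambda,w)$ regardless of how many factors $\varphi_\lambda$ appear in $B$.
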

\begin{proof}
(1)   Let  $f\in \mathcal{K}_{\theta}$ such that $f\in Ker S_{z-\lambda}$, then
 $$(z-\lambda)f(z,w)=\theta(z,w) g(z,w)$$
 for some $g\in H^2(\mathbb{D}^2)$.

 If $\lambda\in \mathbb{T}$, then there is  a sequence of polynomials   $p_n(z)$ such that
   $$(z-\lambda)p_n \xrightarrow{\|\cdot\|_2} 1,$$
   hence $\theta g p_n \xrightarrow{\|\cdot\|_1} f$. Note that  $\theta$ is an inner function, $\{ g p_n \}$ is  a Cauchy sequence in $H^1(\mathbb{D}^2)$, and assume that  $ g p_n \xrightarrow{\|\cdot\|_1} F$, then $f=\theta F$. Since $f\in H^2(\mathbb{D}^2)$, $F$ must be in $H^2(\mathbb{D}^2)$, thus $f\in \theta H^2(\mathbb{D}^2)$, which implies  that $f=0$. This proves the first part of (1) and the second part of (1) will be contained in the proof of (3).

(2) Suppose $\lambda\in \mathbb{D}$.  If $B(\lambda)\neq 0$, then $\theta(\lambda,w)$ is not identical zero,  hence $g(\lambda, w)\equiv 0$.   Since the inner factor $\varphi_\lambda=\frac{z-\lambda}{1-\overline{\lambda}z}$ is a good inner function, by Lemma \ref{Rudin},  $h=g/\varphi_\lambda \in H^2(\mathbb{D}^2)$. It follows that    $f=\theta \frac{h}{1-\overline{\lambda}z} \in \theta H^2(\mathbb{D}^2)$, and  we  must have $f=0$.

If $B(\lambda)=0$, then $f=\frac{1}{1-\overline{\lambda}z} \frac{\theta}{\varphi_\lambda}g$.  Note that
\[\langle\frac{1}{1-\overline{\lambda}z} ~\frac{\theta(z,w)}{\varphi_\lambda(z)}~g(\lambda,w), \theta(z,w)h(z,w) \rangle= \langle \frac{1}{1-\overline{\lambda}z} g(\lambda,w), \varphi_\lambda(z) h(z,w) \rangle=0\]
for any $h\in H^2(\mathbb{D}^2)$,  it follows that
\begin{align*}
  0=\langle f, \theta h \rangle & =\langle \theta\frac{g(z,w)-g(\lambda,w)}{z-\lambda}, \theta h \rangle \\
   & =\langle \frac{g(z,w)-g(\lambda,w)}{z-\lambda}, h \rangle
\end{align*}
for any $h\in H^2(\mathbb{D}^2)$.  It implies $g(z,w)\equiv g(\lambda,w)$ and therefore
\[Ker S_{z-\lambda} \subseteq\left\{\frac{\theta(z,w)g(w)}{z-\lambda}: g(w)\in H_w^2\right\}.\]
It is easy to check the opposite inclusion
\[ \left\{\frac{\theta(z,w)g(w)}{z-\lambda}: g(w)\in H_w^2 \right\}\subseteq Ker S_{z-\lambda},\]
and this proves (2).

(3) Let  $f\in Ker S_{z-\lambda}^*$,  since $S_z^*f=\frac{f(z,w)-f(0,w)}{z}$, we have
\[f(z,w)=\frac{f(0,w)}{1-\overline{\lambda}z}.\]
For $\lambda \in \mathbb{T}$, $\frac{f(0,w)}{1-\overline{\lambda}z}$ is square integrable if and only if $f(0,w)\equiv 0$, hence $f=0$ and this proves second part of (1). For $\lambda\in \mathbb{D}$,  it is not hard to see that $\frac{f(0,w)}{1-\overline{\lambda}z}\in \mathcal{K}_{\theta}$ if and only if $f(0,w) \perp \theta(\lambda,w)H_w^2$ and this proves (3).
\end{proof}

Combine Theorem \ref{spec} and Theorem \ref{kernel}, we have the following theorem.
\begin{theorem}\label{spec2}
Suppose $\mathcal{K}_{\theta}=H^2(\mathbb{D}^2)\ominus \theta H^2(\mathbb{D}^2)$ is a Beurling type quotient module. If $\theta$ has no factor depending only on $z$,  then

\[ \sigma_p(S_z)=\emptyset, ~~~~\mathbb{T}\subseteq  \sigma_e(S_z). \]
Moreover, if $\theta$ is a rational inner function, then
\[ \sigma_p(S_z^*)=\pi_1(Z_\theta \cap \mathbb{D}^2)^*\]
\[ \sigma_e(S_z)=\mathbb{T}\cup \pi_1(Z_\theta \cap \mathbb{D}\times \mathbb{T})=\pi_1(Z_\theta \cap \partial \mathbb{D}^2).\]
For $\lambda\in \sigma(S_z)\setminus \sigma_e(S_z) $, $S_{z-\lambda}^*$ is Fredholm with index
\[ ind(S_{z-\lambda}^*)=Z_\theta(\lambda), \]
where $Z_\theta(\lambda)$ denotes the number of zeros of $\theta(\lambda,\cdot)$ in $\mathbb{D}$.
\end{theorem}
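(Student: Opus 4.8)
The plan is to combine the spectrum computation of Theorem \ref{spec} with the kernel/cokernel description of Theorem \ref{kernel}, and then read off the essential spectrum and the Fredholm index via Lemma \ref{Yang}. First I would dispose of the point spectrum claim: since $\theta$ has no factor depending only on $z$, its Blaschke factor $B(z)$ in the factorization $\theta=B(z)\phi(z,w)$ is constant, so by part (2) of Theorem \ref{kernel}, $Ker\,S_{z-\lambda}=\{0\}$ for every $\lambda\in\mathbb{D}$, and part (1) handles $\lambda\in\mathbb{T}$; hence $\sigma_p(S_z)=\emptyset$. For the adjoint point spectrum when $\theta$ is rational inner: part (3) of Theorem \ref{kernel} gives $Ker\,S_{z-\lambda}^*\neq\{0\}$ iff $H_w^2\ominus\theta(\lambda,w)H_w^2\neq\{0\}$, i.e. iff $\theta(\lambda,\cdot)$ is not an inner function of full "size" — concretely, for rational inner $\theta=z^kw^l\widetilde p/p$ with $p$ having no zeros on $\overline{\mathbb{D}^2}\setminus\mathbb{T}^2$, the finite Blaschke product $\theta(\lambda,\cdot)$ has a nontrivial model space exactly when $\theta(\lambda,\cdot)$ is nonconstant, which by the argument already used in the proof of Theorem \ref{spec} happens precisely when $\theta(\lambda,\cdot)$ has a zero in $\mathbb{D}$, i.e. $\lambda\in\pi_1(Z_\theta\cap\mathbb{D}^2)$. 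Taking conjugates (as in the statement of Theorem \ref{kernel}) yields $\sigma_p(S_z^*)=\pi_1(Z_\theta\cap\mathbb{D}^2)^*$.

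Next I would compute the essential spectrum. For $\lambda\in\mathbb{T}$, Lemma \ref{circle+contain+spectrum} gives $\mathbb{T}\subseteq\sigma(S_z)$; to see $\mathbb{T}\subseteq\sigma_e(S_z)$ one checks $S_{z-\lambda}$ is not Fredholm there. Since $Ker\,S_{z-\lambda}=Ker\,S_{z-\lambda}^*=\{0\}$ on $\mathbb{T}$ by part (1) of Theorem \ref{kernel}, non-Fredholmness must come from non-closed range; this is where Lemma \ref{Yang} enters: $S_{z-\lambda}$ is Fredholm iff $L_{M\ominus zM}(\lambda)$ is Fredholm, and via the intertwining \eqref{Toeplitz} this is governed by the analytic Toeplitz operator $T_{\theta(\lambda,w)}$, which for $\lambda\in\mathbb{T}$ is not bounded below on any open arc unless $\theta$ depends only on $z$ (exactly the argument in the proof of Lemma \ref{circle+contain+spectrum}). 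For $\lambda\in\mathbb{D}$ with $\theta$ rational inner, $S_{z-\lambda}$ is automatically Fredholm when $\lambda\notin\pi_1(Z_\theta\cap\overline{\mathbb{D}^2})$ (it's invertible there by Theorem \ref{spec}), and when $\lambda\in\pi_1(Z_\theta\cap\mathbb{D}\times\mathbb{D})$ one has $Ker\,S_{z-\lambda}=\{0\}$ and $Ker\,S_{z-\lambda}^*$ finite-dimensional (the model space of the finite Blaschke product $\theta(\lambda,\cdot)$), with closed range — hence Fredholm, not essential spectrum. The only $\mathbb{D}$-points left are $\lambda\in\pi_1(Z_\theta\cap\mathbb{D}\times\mathbb{T})$, where $\theta(\lambda,\cdot)$ has a boundary zero, so $T_{\theta(\lambda,w)}$ fails to have closed range: these are in $\sigma_e(S_z)$. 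Putting this together gives $\sigma_e(S_z)=\mathbb{T}\cup\pi_1(Z_\theta\cap\mathbb{D}\times\mathbb{T})=\pi_1(Z_\theta\cap\partial\mathbb{D}^2)$, the last equality using that $p$ has no zeros on $\mathbb{D}\times\mathbb{T}$ so the boundary portion of $Z_\theta$ over $\mathbb{T}$ is exactly $\mathbb{T}$ (with every fiber over $\mathbb{T}$ meeting $\overline{\mathbb{D}}$, as in the proof of Theorem \ref{spec}).

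Finally, for $\lambda\in\sigma(S_z)\setminus\sigma_e(S_z)=\pi_1(Z_\theta\cap\mathbb{D}\times\mathbb{D})$, the index computation: by Lemma \ref{Yang}, $ind(S_{z-\lambda})=ind(L_{M\ominus zM}(\lambda))=ind(T_{\theta(\lambda,w)})$, and the index of an analytic Toeplitz operator with a finite Blaschke product symbol of degree $d$ is $-d$; here $d=Z_\theta(\lambda)$, the number of zeros of $\theta(\lambda,\cdot)$ in $\mathbb{D}$. Therefore $ind(S_{z-\lambda})=-Z_\theta(\lambda)$, equivalently $ind(S_{z-\lambda}^*)=Z_\theta(\lambda)$; consistently, $Ker\,S_{z-\lambda}=\{0\}$ and $\dim Ker\,S_{z-\lambda}^*=Z_\theta(\lambda)$ directly from part (3) of Theorem \ref{kernel}. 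The main obstacle I anticipate is the careful bookkeeping at the boundary $\mathbb{T}$ and at fibers over $\mathbb{T}$: one must verify that the unitary-extension dichotomy of Lemma \ref{Yang} genuinely forces non-Fredholmness (not merely non-invertibility) at every point of $\mathbb{T}$, and that no cancellation between kernel and cokernel dimensions occurs there — which is why the vanishing $Ker\,S_{z-\lambda}=Ker\,S_{z-\lambda}^*=\{0\}$ from Theorem \ref{kernel}(1) together with the non-closed-range argument is essential rather than incidental.
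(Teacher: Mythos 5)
Your proposal is correct and follows the paper's argument closely: $\sigma_p(S_z)=\emptyset$ and $\sigma_p(S_z^*)$ from Theorem~\ref{kernel}, closed range of $S_{z-\lambda}$ via the Toeplitz intertwining $(\ref{2.1})$--$(\ref{Toeplitz})$, the index from the Blaschke degree of $\theta(\lambda,\cdot)$, and $\mathbb{T}\subseteq\sigma_e(S_z)$ from the vanishing of both kernel and cokernel on $\mathbb{T}$ combined with $\mathbb{T}\subseteq\sigma(S_z)$. One small caution: the Fredholm clause of Lemma~\ref{Yang} is stated only for $\lambda\in\mathbb{D}$, so at $\lambda\in\mathbb{T}$ the detour through $L_{M\ominus zM}(\lambda)$ is not justified as stated, and the direct argument you also supply---zero kernel and cokernel, so closed range would force invertibility, contradicting Theorem~\ref{spec}---is the one that actually carries the boundary case, exactly as in the paper.
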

\begin{proof}
By Theorem \ref{spec}, $\mathbb{T}$ is always contained in $\sigma(S_z)$. Combine with the fact that $Ker S_{z-\lambda} =Ker S_{z-\lambda}^*=\{0\}$ for $\lambda\in \mathbb{T}$, we conclude that $Ran S_{z-\lambda}$ is not closed, otherwise $S_{z-\lambda}$ will be invertible. Therefore
$\mathbb{T}\subset \sigma_e(S_z)$.

 If $\theta$ is rational, for $\lambda\in \mathbb{D}$, by (\ref{2.1}) and (\ref{Toeplitz}), $Ran S_{z-\lambda}$ is  closed if and only $Ran T_{\theta(\lambda, \cdot)}$ is closed. This happens if and only if $\theta(\lambda,\cdot)$ is bounded away zero on $\mathbb{T}$, i.e. $\theta(\lambda,\cdot)$ has no zeros on $T$. Moreover, in this case, the inner factor of $\theta(\lambda,w)$ is always a finite Blaschke product, and it is uniquely determined by the zeros of $\theta(\lambda, w)$ in $\mathbb{D}$. Hence by Theorem \ref{kernel}, $Ker S_{z-\lambda}^*$ is always finite dimensional and $ind(S_{z-\lambda}^*)=Z_\theta(\lambda)$.
\end{proof}
\begin{remark}\label{remark2.8}
Let $K_\lambda(\cdot)=\frac{1}{1-\bar{\lambda}\cdot}$ be the reproducing kernel of $H^2(\mathbb{D})$,  for a rational inner function $\theta$ and $\lambda\in \sigma(S_z)\setminus \sigma_e(S_z)$, let $\{a_k(\lambda)\}$ be the zeros in $\mathbb{D}$ of $\theta(\lambda,\cdot)$ and $l_k$ be the corresponding order of $a_k(\lambda)$, then the canonical basis of $Ker S_{z-\lambda}^*$ has the form
\[\left\{ K_{\lambda}(z)\cdot K_{a_k(\lambda)}^{(j)}(w) , ~~~~k\geq 0~~and ~~j=0,1,\cdots,l_k-1\right\}.\]
Here $K^{(l)}_\zeta(w)=\frac{l! w^l}{(1-\bar{\zeta}w)^{l+1}}$.
\end{remark}

 The following example is an application of Theorem \ref{spec} and Theorem \ref{spec2}.

\begin{example}\label{(1,1)-type}
\begin{enumerate}
\item For $\theta(z,w)=\frac{zw-t}{1-tzw}, 0<t<1,$ note that $Z_\theta\cap \overline{\mathbb{D}^2}=\{~(z,\frac{t}{z}): t\leq |z|\leq1~\}$,we have
\[ \sigma(S_z) = \left\{\lambda\in \mathbb{C} : t\leq |\lambda| \leq 1  \right\},\]
\[ \sigma_e(S_z)=\mathbb{T}\cup \{\lambda\in \mathbb{C} :  |\lambda|=t \},  \]
\[ \sigma_p(S_z^*) = \left\{\lambda\in \mathbb{C} : t< |\lambda| < 1  \right\}. \]
  \item For $\theta(z,w)=\frac{zw-tz-(1-t)w}{1-tw-(1-t)z}, 0<t<1$, note that $Z_\theta\cap \overline{\mathbb{D}^2}=\{~(z,\frac{tz}{z-(1-t)}): |z|\leq1, |tz|\leq |z-(1-t)|~\}$, we have
\[  \sigma(S_z)=\overline{\mathbb{D}}\setminus\left \{\lambda\in \mathbb{C} : \left|\lambda-\frac{1}{1+t}\right|<\frac{t}{1+t}\right\}, \]
\[\sigma_e(S_z)=\mathbb{T}\cup \left\{\lambda\in \mathbb{C} : \left|\lambda-\frac{1}{1+t}\right|=\frac{t}{1+t}\right\}, \]
\[ \sigma_p(S_z^*)=\mathbb{D}\setminus\left \{\lambda\in \mathbb{C} : \left|\lambda-\frac{1}{1+t}\right|\leq \frac{t}{1+t}\right\}.  \]
\end{enumerate}

\end{example}

\section{Geometric property of the compressed shift operators}
Firstly we provide necessary preliminaries on Hermitian vector bundles, which are
mainly extracted from \cite{CoD} and \cite{ChD}. Given a domain $\Omega$ in $\mathbb{C}$, a \textit{rank $n$ holomorphic vector bundle} over $\Omega$ is a complex manifold
$E$ with a holomorphic map $\pi$ from $E$ onto $\Omega$ such that each fibre $E_\lambda=\pi^{-1}(\lambda)$ is a copy of $\mathbb{C}^n$ and for each $\lambda_0$ in $\Omega$, there exists a neighborhood $\Delta$ of $\lambda_0$ and holomorphic functions $s_1,\cdots, s_n$
from $\Delta$ to $E$ such that $E_\lambda=\bigvee \{s_1(\lambda,\cdots,s_n(\lambda))\}$. The $n$-tuple of functions $\{s_1,\cdots, s_n\}$ is called a holomorphic \textit{frame} over $\Delta$. A \textit{cross-section} is a map $s$ from an open subset of $\Omega$ to $E$ such that $\pi(s(\lambda))=\lambda$. A \textit{bundle map} is a map $\varphi$ between
two bundles $E^1$ and $E^2$ over $\Omega$ which is holomorphic, and defines a linear transformation from $E^1_\lambda$ to $E^2_\lambda$ for $\lambda$ in $\Omega$.

A \textit{Hermitian holomorphic vector bundle} $E$ over $\Omega$ is a holomorphic vector bundle such that each fiber $E_\lambda$ is an inner product space. The bundle is said to have a \textit{real-analytic (resp. $C^\infty$) metric} if $\lambda\rightarrow \|s(\lambda)\|^2$ is real-analytic (resp. $C^\infty$) for each holomorphic cross-section of $E$. Two
Hermitian holomorphic vector bundles $E^1$ and $E^2$ over $\Omega$ will be said to be equivalent if there exists an isometric holomorphic bundle map from $E^1$ onto $E^2$.

Let $\mathcal{E}(\Omega)$ denote the algebra of $C^\infty$ functions on $\Omega$ and let $\mathcal{E}^p(\Omega)$   denote the $C^\infty$ differential forms of degree $p$ on $\Omega$. Then we have $\mathcal{E}^0(\Omega)=\mathcal{E}(\Omega)$, $\mathcal{E}^1(\Omega)=\{ f dz + gd\bar{z}: f,g \in \mathcal{E}(\Omega) \}$ and $\mathcal{E}^2(\Omega)=\{f dzd\bar{z}: f \in\mathcal{E}(\Omega)\}$. For a $C^\infty$ vector bundle $E$ over $\Omega$, let $\mathcal{E}^p(\Omega,E)$ denote the differential forms of degree $p$
with coefficients in $E$, then $\mathcal{E}^0(\Omega,E)$ are just $C^\infty$  cross-sections of $E$ on $\Omega$.

A \textit{connection} on $E$ is a first order differential operator $D : \mathcal{E}^0(\Omega,E)\rightarrow \mathcal{E}^1(\Omega,E) $ such that
\[ D(f\sigma)=df\otimes \sigma +fD(\sigma) \]
for $f$ in $\mathcal{E}(\Omega)$ and $\sigma$ in $\mathcal{E}^0(\Omega,E)$. The connection $D$ is called \textit{metric-preserving} if
\[ d\langle \sigma_1, \sigma_2 \rangle=\langle D\sigma_1, \sigma_2\rangle+\langle \sigma_1, D\sigma_2 \rangle \]
for $\sigma_1,\sigma_2$ in $\mathcal{E}^0(\Omega,E)$.

Locally, $D$ can be represented by a connection matrix. Let $s =\{s_1,\cdots,s_n\}$ be a local frame on $\Delta$, then the connection matrix $\Theta(s)=[\Theta_{ij}]$ relative to the frame s is a matrix with 1-form entries $\Theta_{ij}$ defined on $\Delta$ such that
\[D(s_i)=\Sigma_{j=1}^n \Theta_{ij}\otimes s_j.\]
The connection $D$ can be extended to a differential operator from $\mathcal{E}^1(\Omega,E)$ to $\mathcal{E}^2(\Omega,E)$ so that
\[ D(\sigma\otimes \alpha)=D\sigma \wedge \alpha +\sigma\otimes d\alpha \]
for $\sigma$ in $\mathcal{E}^0(\Omega,E)$ and $\alpha$ in $\mathcal{E}^1(\Omega)$. It is well known that $D^2$ is $C^\infty$ linear,  so for any $\sigma$ in $\mathcal{E}^0(\Omega,E)$,  we have
\[ D^2 \sigma= \mathcal{K}\sigma dzd\bar{z}, \]
where $\mathcal{K}$ is a bundle map on $E$ which uniquely determined by $D^2$. Thus $D^2$ can be identified with $\mathcal{K}$ and we call $\mathcal{K}$ the \textit{curvature} of $(E,D)$.


\begin{definition}\label{definition+cowen-douglas}
Let $\Omega$ be a connected open subset of $\mathbb{C}$ and $\mathcal{H}$ be a separable Hilbert space. For a positive integer $n$, the Cowen-Douglas class  $\mathfrak{B}_n(\Omega)$  consists of the bounded linear operators $T$ on $\mathcal{H}$ with the following properties:
\begin{enumerate}
  \item $Ran (T-w)=\mathcal{H}$ for $w\in \Omega$;
  \item  $\bigvee_{w\in \Omega} Ker (T-w) = \mathcal{H}$;
  \item $\dim Ker(T-w)=n$ for any  $w\in \Omega$.
\end{enumerate}
\end{definition}

For a fixed  point $w_0\in \Omega$, the condition (2) in definition \ref{definition+cowen-douglas} is equivalent to
 \[ \bigvee_{n=1}^\infty Ker (T-w_0)^n = \mathcal{H}. \]
 If $\Omega_0$ is a connected open subset of $\Omega$, then $\mathfrak{B}_n(\Omega)\subset \mathfrak{B}_n(\Omega_0)$ because
 $\bigvee_{w\in \Omega_0} Ker (T-w)=\bigvee_{w\in \Omega} Ker (T-w)$ (\cite{CoD}, Corollary 1.13).
 Further, for $T$ in $\mathfrak{B}_n(\Omega)$, the mapping $w\mapsto Ker (T-w)$ defines a complex vector bundle $E_T$ over $\Omega$.  One   connection between $T$ and $E_T$ is contained in the following lemma.
\begin{lemma}[\cite{CoD}, Propositon 1.18]\label{reducibility}
An operator $T$ in $\mathfrak{B}_n(\Omega)$ is reducible if and only if the complex bundle $E_T$ is reducible as a Hermitian holomorphic vector bundle. In particular, an operator $T$ in $\mathfrak{B}_1(\Omega)$ is irreducible.
\end{lemma}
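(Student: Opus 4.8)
The plan is to build the equivalence around the standard dictionary between operators commuting with $T$ and holomorphic bundle endomorphisms of $E_T$, supplemented by a unique-continuation argument that upgrades fibrewise orthogonality of a holomorphic subbundle to genuine orthogonality of the spanned subspaces of $\mathcal{H}$. I expect the main obstacle to lie in the ``only if'' direction: fibrewise orthogonality $E'_w\perp E''_w$ is strictly weaker than $\bigvee_w E'_w\perp\bigvee_w E''_w$, and bridging that gap is exactly where holomorphy of the frames and analytic continuation off the diagonal must be used.

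For the direction ``$T$ reducible $\Rightarrow E_T$ reducible'', suppose $P$ is a nontrivial orthogonal projection with $PT=TP$. Then $P$ commutes with $T-w$ for each $w\in\Omega$, so $P$ maps $\operatorname{Ker}(T-w)$ into itself and restricts to a fibrewise self-adjoint idempotent $\widehat P(w)$ on $E_{T,w}=\operatorname{Ker}(T-w)$. Since $P$ is a fixed bounded operator, it carries holomorphic frames of $E_T$ to holomorphic sections, so $\widehat P$ is a holomorphic bundle endomorphism. Its range $E'=\widehat P E_T$ and kernel $E''=(I-\widehat P)E_T$ are holomorphic subbundles: the rank $w\mapsto \operatorname{tr}\widehat P(w)$ is continuous and integer valued on the connected set $\Omega$, hence constant. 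They are orthogonal because each $\widehat P(w)$ is a self-adjoint idempotent, so $E_T=E'\oplus E''$ orthogonally. Neither summand is trivial: if $E'=0$ then $\widehat P\equiv 0$, so $P$ annihilates $\bigvee_w\operatorname{Ker}(T-w)=\mathcal{H}$ and $P=0$; likewise $E''=0$ forces $P=I$. Thus $E_T$ is a reducible Hermitian holomorphic vector bundle (cf. the definition used in \cite{CoD}).

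For the converse, suppose $E_T=E'\oplus E''$ orthogonally with both summands nonzero, and put $\mathcal{H}'=\bigvee\{E'_w:w\in\Omega\}$, $\mathcal{H}''=\bigvee\{E''_w:w\in\Omega\}$. Since $T$ acts as the scalar $w$ on $E'_w$ and on $E''_w$, both $\mathcal{H}'$ and $\mathcal{H}''$ are $T$-invariant, and $\mathcal{H}'\vee\mathcal{H}''\supseteq\bigvee_w\operatorname{Ker}(T-w)=\mathcal{H}$. The key step is $\mathcal{H}'\perp\mathcal{H}''$: fix $w_0\in\Omega$ and holomorphic local frames $\gamma_1,\dots,\gamma_k$ of $E'$ and $\delta_1,\dots,\delta_{n-k}$ of $E''$ near $w_0$; for fixed indices the function $F(w,w')=\langle\gamma_i(w),\delta_j(w')\rangle$ is holomorphic in $w$, antiholomorphic in $w'$, and vanishes on the diagonal $w=w'$ by fibrewise orthogonality. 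Expanding $F$ in powers of $w-w_0$ and $\overline{w'-w_0}$ and using linear independence of the monomials $(w-w_0)^a\overline{(w-w_0)}^b$, the diagonal vanishing forces $F\equiv 0$ near $(w_0,w_0)$; since $\langle u,v\rangle$ for $u\in E'_w$, $v\in E''_{w'}$ is globally defined and agrees locally with such an $F$, the identity theorem on the connected set $\Omega\times\Omega^*$ (viewing $F$ as holomorphic in $(w,\overline{w'})$) gives $\langle u,v\rangle=0$ for all $w,w'$. Hence $\mathcal{H}=\mathcal{H}'\oplus\mathcal{H}''$ is an orthogonal decomposition into two nonzero $T$-invariant subspaces, which are therefore reducing, and $T$ is reducible. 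Finally, if $n=1$ then $E_T$ is a line bundle with no nonzero proper holomorphic subbundle, so it cannot be reducible; by the equivalence just proved, every $T\in\mathfrak{B}_1(\Omega)$ is irreducible. The routine bookkeeping of passing from local frames to the global subspaces, and of checking the constancy of the ranks, will need care but presents no conceptual difficulty; the genuine point is the off-diagonal vanishing of $F$.
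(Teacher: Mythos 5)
Your proof is correct and follows the same two-step strategy the paper uses when it proves the generalized version (Theorem~\ref{reduce}): a direct decomposition argument for the forward implication, and a sesquiholomorphic unique-continuation argument to upgrade fibrewise orthogonality of holomorphic subbundles to orthogonality of the closed spans. The paper itself only cites \cite{CoD} for the statement in question, but your off-diagonal vanishing of $F(w,w')=\langle\gamma_i(w),\delta_j(w')\rangle$ (a function holomorphic in $w$, antiholomorphic in $w'$, vanishing on the diagonal, hence identically zero --- exactly Lemma~\ref{6.1}) is equivalent, after Taylor expansion, to the paper's computation $\langle e_s^{(k)}(w),e_t(w)\rangle=0$ for all $k$ followed by analytic continuation.
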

%


Let $E$ be a $C^\infty$ Hermitian vector bundle of rank $n$ over a domain $\Omega$ in $\mathbb{C}$ with metric-preserving connection $D$ and curvature $\mathcal{K}$. We denote by $\mathcal{A}$ the algebra of bundle maps generated by the curvature $\mathcal{K}$ and its covariant derivatives $\mathcal{K}_{z^i \bar{z}^j}$ to all orders, then the algebra $\mathcal{A}$ is self-adjoint since $\mathcal{K}$ is self-adjoint (cf.\cite{ChD}). Let $s$ be a $C^\infty$ orthonormal frame of $E$ over an open subset $\Delta$ of $\Omega$. For a bundle map $\Phi$ on $E$ and $z$ in $\Delta$, let $\Phi(z)$ be the induced fibre map on the fibre $E_z$ and $\Phi(s)(z)$ be the matrix of $\Phi(z)$ relative the base $s(z)$. We denote by $\mathcal{A}(z)$ the set of linear transforms on the fibre $E_z$ induced by the bundle map in $\mathcal{A}$ and $\mathcal{A}(s)(z)$ the matrix algebra generated by the matrices $\Phi(s)(z)$ for $\Phi$ in $\mathcal{A}$, then $\mathcal{A}(s)(z)$ is a self-adjoint matrix algebra in $M_n(\mathbb{C})$ since $s$ is orthogonal.

It is well known that any self-adjoint matrix algebra is a direct sum of full matrix algebras with multiplicity. More precisely, for any self-adjoint matrix algebra, there exist two tuples of positive integers $\textbf{m}=(m_1,\cdots,m_r)$, $\textbf{n}=(n_1,\cdots,n_r)$, such that the algebra consists of matrices of the form
\[A_1\otimes I_{m_1} \oplus A_2\otimes I_{m_2}\oplus \cdots \oplus A_r\otimes I_{m_r}, \]
where $A_i$ is an arbitrary $n_i\times n_i$ matrix repeated $m_i$ times on the diagonal, we denote such an algebra by $M(\textbf{n}, \otimes \textbf{m})$.


Now we can state the theorem on block diagonalization of the connections:
\begin{theorem}[\cite{ChD}, Theorem 4.2]\label{block-diag}

Let $E$ be a $C^\infty$ Hermitian vector bundle of rank $n$ over an open subset $\Omega\in \mathbb{C}$, with metric-preserving connection $D$. For any point $z_0$ off a non-where dense subset of $\Omega$ (the coalescing set, the set on which the dimension of a certain algebra $\mathcal{A}(z)$ is not locally constant), there exist two $r$-tuple of integers $\textbf{m}=(m_1,\cdots,m_r)$, $\textbf{n}=(n_1,\cdots,n_r)$, a neighborhood $\Omega_0$ of $z_0$ and a $C^\infty$ orthonormal frame $s$ for $E$ over $\Omega_0$ with the properties:
\[\mathcal{A}(s)(z)=M(\textbf{n}, \otimes \textbf{m})\]
for all $z$ in $\Omega_0$, where $\mathcal{A}$ is the algebra of bundle maps generated by the curvature $\mathcal{K}$, and its covariant derivatives $\mathcal{K}_{z^i \bar{z}^j}$ to all orders. Moreover,
\[\Theta(s)=\Theta_1\otimes I_{m_1} \oplus \cdots \oplus \Theta_r \otimes I_{m_r} \]
where $\Theta(s)$ is the matrix of connection 1-forms of $D$ relative to the frame $s$ and $\Theta_i$ are $C^\infty$ $n_i\times n_i$ matrices with 1-form entries defined on $\Omega_0$.
\end{theorem}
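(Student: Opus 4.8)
The plan is to split the assertion into two nearly independent parts: first straighten the curvature algebra $\mathcal{A}(z)$ into a \emph{constant} block algebra with respect to a smooth orthonormal frame, and then, keeping this algebra constant, gauge the connection matrix into the compatible block form by a gauge transformation valued in the commutant of that algebra. For the first part, fix a point $z_0$ off the coalescing set; this set is nowhere dense because $z\mapsto\dim_{\mathbb{C}}\mathcal{A}(z)$ is lower semicontinuous (the curvature and its covariant derivatives vary smoothly), hence locally constant on a dense open set. Relative to an orthonormal basis of the fibre $E_{z_0}$, the self-adjoint algebra $\mathcal{A}(z_0)\subset M_n(\mathbb{C})$ is, by the structure theory of finite-dimensional $C^*$-algebras, unitarily equivalent to $M(\textbf{n},\otimes\textbf{m})$ for suitable tuples $\textbf{n}=(n_1,\dots,n_r)$, $\textbf{m}=(m_1,\dots,m_r)$ with $\sum_i n_im_i=n$. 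Since $\dim\mathcal{A}(z)$ is constant near $z_0$, the $\mathcal{A}(z)$ form a smooth self-adjoint subbundle of $\Omega_0\times M_n(\mathbb{C})$; I would produce a smooth full system of matrix units for this algebra bundle over a small disk $\Omega_0$ and orthonormalize inside each multiplicity space to obtain a $C^\infty$ orthonormal frame $s$ of $E$ over $\Omega_0$ with $\mathcal{A}(s)(z)=M(\textbf{n},\otimes\textbf{m})$ for all $z\in\Omega_0$. This is the first assertion, and it is the step I expect to carry the real difficulty: the $C^*$-structure theory must be performed \emph{with smooth parameters} while keeping the frame orthonormal, which is precisely where the hypothesis ``$z_0$ off the coalescing set'' enters; everything afterwards is formal.

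Next I would show $\Theta(s)$ normalizes $A:=M(\textbf{n},\otimes\textbf{m})$. Let $A'\cong M(\textbf{m},\otimes\textbf{n})$ be the commutant of $A$ in $M_n(\mathbb{C})$, so $A\cap A'=Z(A)\cong\mathbb{C}^r$. Because $\mathcal{A}$ is generated by $\mathcal{K}$ and its covariant derivatives and covariant differentiation satisfies the Leibniz rule, $\mathcal{A}$ is closed under covariant differentiation; hence $\nabla\Phi\in\mathcal{A}$ for every section $\Phi$ of $\mathcal{A}$. Read in the frame $s$, in which $\Phi(s)$ is a constant matrix in $A$, the matrix of $\nabla\Phi$ equals $[\Theta(s),\Phi(s)]$, so $[\Theta(s),A]\subseteq A$. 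A short computation with the tensor decomposition of $A$ identifies the Lie normalizer $\{x\in M_n(\mathbb{C}):[x,A]\subseteq A\}$ with $A+A'$ (testing against the minimal central projections of $A$ already forces $x$ to be block diagonal, ruling out block permutations), so $\Theta(s)$ is a $1$-form valued in $A+A'$; choose an orthogonal, $*$-invariant splitting $\Theta(s)=\Theta_A+\Theta_{A'}$ with $\Theta_A$ valued in $A$ and $\Theta_{A'}$ valued in $A'\ominus Z(A)$, both skew-Hermitian.

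Finally I would gauge away $\Theta_{A'}$. Since $A$ and $A'$ commute elementwise, the cross terms in $d\Theta(s)+\Theta(s)\wedge\Theta(s)$ cancel, so $\mathcal{K}(s)=(d\Theta_A+\Theta_A\wedge\Theta_A)+(d\Theta_{A'}+\Theta_{A'}\wedge\Theta_{A'})$, an $A$-valued form plus an $A'$-valued form; as $\mathcal{K}(s)$ lies in $A$, the $A'$-summand $R_{A'}$ must lie in $A\cap A'=Z(A)$, i.e.\ $R_{A'}=\bigoplus_{i=1}^r\kappa_i P_i$ for (purely imaginary) scalar $2$-forms $\kappa_i$, where the $P_i$ are the minimal central projections. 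Shrinking $\Omega_0$ to a disk, the Poincar\'e lemma gives $\lambda_i$ with $d\lambda_i=\kappa_i$; set $\Lambda=\bigoplus_i\lambda_iP_i$. Then $d+(\Theta_{A'}-\Lambda)$ is a metric connection on a trivial bundle with structure group $U(A')$ whose curvature is $R_{A'}-d\Lambda=0$, so, being flat over the simply connected $\Omega_0$, it admits a parallel unitary frame: there is a $C^\infty$ map $g\colon\Omega_0\to U(A')$ with $g^{-1}(\Theta_{A'}-\Lambda)g+g^{-1}dg=0$. Replacing $s$ by $s\cdot g$: since $g$ commutes with $A$ we still have $\mathcal{A}(s)(z)=g^{-1}Ag=A$, and
\[\Theta(s)=g^{-1}\Theta_Ag+g^{-1}\Theta_{A'}g+g^{-1}dg=\Theta_A+\Lambda,\]
which is $A$-valued, i.e.\ of the form $\Theta_1\otimes I_{m_1}\oplus\cdots\oplus\Theta_r\otimes I_{m_r}$ with each $\Theta_i$ a $C^\infty$ $n_i\times n_i$ matrix of $1$-forms, as claimed. (Conceptually these last two steps are the Ambrose--Singer theorem in disguise: the local holonomy algebra at $z_0$ is spanned by $\mathcal{K}$ and its covariant derivatives, hence contained in $\mathfrak{u}(n)\cap A=\bigoplus_i\mathfrak{u}(n_i)\otimes I_{m_i}$, so the structure group of the connection reduces to $\prod_iU(n_i)\otimes I_{m_i}$, yielding such a frame at once.)
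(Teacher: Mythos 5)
This theorem is quoted from \cite{ChD} (Theorem 4.2) without proof in the present paper, so there is nothing in-text to compare against; I assess your argument on its own. The second half of your proof is essentially correct: the identification of the Lie normalizer of $A=M(\textbf{n},\otimes\textbf{m})$ with $A+A'$ is right; the fact that $A$- and $A'$-valued $1$-forms anticommute under wedge (so the cross-terms in the curvature vanish) is right; and the deduction that the $A'$-component of the curvature therefore lies in $A\cap A'=Z(A)$, hence can be made exact by the Poincar\'e lemma and gauged away by a parallel frame of a flat $U(A')$-connection over a disk, is a clean way to reach $\Theta(s)\in A$. You should, however, repair one wrong intermediate claim: you say $\Phi(s)$ is a \emph{constant} matrix in $A$, which step one does not provide. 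What the frame supplies is that $\Phi(s)$ is $A$-valued and smooth. The correct reasoning is: the matrix of $\nabla\Phi$ relative to $s$ is $d\Phi(s)+[\Theta(s),\Phi(s)]$; since $\mathcal A$ is closed under covariant differentiation this matrix is $A$-valued, and so is $d\Phi(s)$; hence $[\Theta(s),\Phi(s)]$ is $A$-valued; and since the $\Phi(s)(z)$, $\Phi\in\mathcal A$, span $A$ at each $z$, one still obtains $[\Theta(s),A]\subseteq A$.

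The genuine gap is exactly where you flagged it: the construction of the $C^\infty$ orthonormal frame $s$ over some neighborhood $\Omega_0$ of $z_0$ with $\mathcal{A}(s)(z)\equiv M(\textbf{n},\otimes\textbf{m})$. This is the technical heart of the theorem, and you leave it at ``produce a smooth full system of matrix units and orthonormalize.'' To actually carry that out, using that $\dim\mathcal{A}(z)$ is locally constant and the metric is $C^\infty$, one must show: the minimal central projections $P_i(z)$ of $\mathcal{A}(z)$ vary smoothly (for example as spectral projections of a smooth self-adjoint element of the center with locally constant spectral multiplicities); inside each simple summand a $C^\infty$ minimal projection $e^{(i)}_{11}(z)$ together with a compatible $*$-system of partial isometries $e^{(i)}_{k1}(z)$ can be chosen; and a $C^\infty$ orthonormal basis of $\mathrm{Ran}\,e^{(i)}_{11}(z)$ can be chosen and transported by the $e^{(i)}_{k1}(z)$ to yield an orthonormal frame adapted to the tensor decomposition $\mathbb{C}^{n_i}\otimes\mathbb{C}^{m_i}$. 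Until this is carried out there is no frame $s$ with which to begin, so the otherwise sound remainder of your argument rests on an unproven existence claim.
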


\begin{remark}
The theorem asserts that the algebra $\mathcal{A}(s)(z)$ does not depend on the point $z$ in $\Omega_0$ and the connection matrix has a block diagonal form, thus each block corresponds to a subbundle invariant under $D$. Explicitly, for any $1\leq i\leq r$, the block $\Theta_i \otimes I_{m_i}$ corresponds $m_i$ $D$-invariant sunbundles of rank $n_i$. We denote these subbundles by $E_{i1}, \cdots , E_{i m_i}$. Moreover, if $E$ is a holomorphic Hermitian vector bundle with canonical connection $D$, then $D$-invariant subbundles are actually reducing subbundles for $E$ (see Proposition 4.18, Chapter 1 in \cite{Koba}). Therefore we can apply
the above theorem  to obtain a collection of mutually orthogonal reducing subbundles $\{E_{ij} \}, 1\leq i\leq r, 1\leq j\leq m_i$ with $rank E_{ij}=n_i$ , such that
\[ E=E_{11}\oplus \cdots \oplus E_{1 m_1}\oplus \cdots \oplus E_{r1}\oplus\cdots \oplus E_{rm_r}. \]
Further, if we apply the theorem to the bundle $E(T)$ with canonical connection for a Cowen-Douglas operator $T$ , then $\{E_{ij}\}$ correspond to reducing subspaces $\{ \mathcal{H}_{ij} \}:=\bigvee_{z\in \Omega_0} E_{ij}(z) $ such that
\[ \mathcal{H}=\mathcal{H}_{11}\oplus \cdots \oplus \mathcal{H}_{1 m_1}\oplus \cdots \oplus \mathcal{H}_{r1}\oplus\cdots \oplus \mathcal{H}_{rm_r}. \]
\end{remark}

\begin{theorem}[\cite{ChD}, Theorem 4.4]\label{reducing-subspace}

 Let $T$ be a Cowen-Douglas operator and $E(T)$ be its associated holomorphic
Hermitian vector bundle with reducing subbundles $\{E_{ij}\}$ given by the block diagonalization of the canonical connection. Let $\{ \mathcal{H}_{ij} \}$ be the corresponding reducing subspaces. Then:
\begin{enumerate}
  \item The reducing subspaces $\{ \mathcal{H}_{ij} \}$ are minimal.
  \item $\{ \mathcal{H}_{ij} \}$ and $\{ \mathcal{H}_{i' j'} \}$ are unitarily equivalent if and only if $i = i'$ .
\end{enumerate}

\end{theorem}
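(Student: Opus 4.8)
\emph{Proof proposal.}
The plan is to deduce both statements from the block decomposition of the canonical connection provided by Theorem~\ref{block-diag}, combined with the rigidity theorem of Cowen-Douglas. Fix a generic point $z_0$, the neighbourhood $\Omega_0$ and the $C^\infty$ orthonormal frame $s$ as in Theorem~\ref{block-diag}, so that over $\Omega_0$ the bundle $E(T)$ splits as the orthogonal direct sum $\bigoplus_{i=1}^r\bigoplus_{j=1}^{m_i}E_{ij}$, where $\operatorname{rank}E_{ij}=n_i$ and the connection matrix of $D$ on each $E_{ij}$ is $\Theta_i$; put $\mathcal{H}_{ij}=\bigvee_{z\in\Omega_0}E_{ij}(z)$. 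The first step is routine bookkeeping: since $\mathcal{H}_{ij}$ reduces $T$, the compression $T_{ij}:=T|_{\mathcal{H}_{ij}}$ is again a Cowen-Douglas operator with $T_{ij}\in\mathfrak{B}_{n_i}(\Omega)\subseteq\mathfrak{B}_{n_i}(\Omega_0)$ and $E(T_{ij})|_{\Omega_0}=E_{ij}$; moreover, since $E_{ij}$ is a holomorphic orthogonal (hence reducing) subbundle of $E(T)$, its Chern connection, curvature $\mathcal{K}$ and all covariant derivatives $\mathcal{K}_{z^a\bar z^b}$ are the restrictions of those of $E(T)$, so the curvature algebra of $E_{ij}$ is the compression of $\mathcal{A}(s)(z)=M(\textbf{n},\otimes\textbf{m})$ to the $E_{ij}$-summand, namely the full matrix algebra $M_{n_i}(\mathbb{C})$ at every $z$ in $\Omega_0$.

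For part (1): since the curvature algebra of $E_{ij}$ is all of $M_{n_i}(\mathbb{C})$ at a generic fibre, a reducing subbundle of $E_{ij}$ would there determine a projection in the commutant of $M_{n_i}(\mathbb{C})$, which is $\mathbb{C}I_{n_i}$ and hence contains no nontrivial projection; thus $E_{ij}$ is an irreducible Hermitian holomorphic vector bundle and $T_{ij}$ is irreducible by Lemma~\ref{reducibility} (cf.~\cite{ChD}). As $\mathcal{H}_{ij}$ reduces $T$, any reducing subspace of $T$ contained in $\mathcal{H}_{ij}$ also reduces $T_{ij}$, hence equals $\{0\}$ or $\mathcal{H}_{ij}$; so $\mathcal{H}_{ij}$ is a minimal reducing subspace of $T$.

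For part (2): applying the rigidity theorem for Cowen-Douglas operators \cite{CoD} over $\Omega_0$ (legitimate because $T_{ij},T_{i'j'}\in\mathfrak{B}_{n_i}(\Omega_0)$), we see that $\mathcal{H}_{ij}$ and $\mathcal{H}_{i'j'}$ are unitarily equivalent if and only if $E_{ij}$ and $E_{i'j'}$ are equivalent as Hermitian holomorphic vector bundles over $\Omega_0$. If $i=i'$, such an equivalence exists by construction: in the frame $s$ the copies $E_{i1},\dots,E_{i m_i}$ all carry the same connection matrix $\Theta_i$, so the constant unitary permuting them is an isometric holomorphic bundle isomorphism. Conversely, an isometric holomorphic isomorphism $E_{ij}\cong E_{i'j'}$ intertwines the Chern connections, hence $\mathcal{K}$ and all $\mathcal{K}_{z^a\bar z^b}$, hence the curvature algebras together with the concrete generators they inherit from $M(\textbf{n},\otimes\textbf{m})$; since distinct simple summands of the Wedderburn decomposition $M(\textbf{n},\otimes\textbf{m})$ carry inequivalent modules, this forces $i=i'$. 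Hence $\mathcal{H}_{ij}$ and $\mathcal{H}_{i'j'}$ are unitarily equivalent precisely when $i=i'$.

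The main obstacle I anticipate is the converse in part (2), together with the legitimacy of reducing everything to the single chart $\Omega_0$: one must know that the block form of Theorem~\ref{block-diag} is \emph{maximal}, i.e.\ it collects all mutually equivalent reducing subbundles into a single block --- so that distinct indices $i$ give inequivalent \emph{bundles}, not merely inequivalent abstract curvature algebras --- and that the subspaces $\mathcal{H}_{ij}$ do not depend on the choices of $z_0$, $\Omega_0$ and $s$. Both are structural properties of the Cowen-Douglas/Chen-Douglas theory (uniqueness of the isotypic decomposition of the self-adjoint curvature algebra, and real-analyticity of the metric propagating the local picture), and involve nothing special to the compressed shift $S_z$.
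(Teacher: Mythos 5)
The paper does not contain its own proof of this theorem: it is quoted verbatim as \cite{ChD}, Theorem~4.4 and used as a black box, so there is no in-paper argument against which to compare. Judged on its own merits, your reconstruction is substantially correct and is the route one would expect from Theorem~\ref{block-diag}. Two remarks are worth recording.

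First, the two concerns you raise at the end are in fact resolved by your own argument. The maximality question disappears because Theorem~\ref{block-diag} asserts an \emph{equality} $\mathcal{A}(s)(z)=M(\textbf{n},\otimes\textbf{m})$; the Wedderburn decomposition of a finite-dimensional $C^*$-algebra into simple summands with multiplicity is canonical, so distinct indices $i$ automatically label inequivalent simple summands of $\mathcal{A}(s)(z)$. Your converse step in part (2) — an isometric holomorphic isomorphism $E_{ij}\cong E_{i'j'}$ intertwines the Chern connections, hence $\mathcal{K}$ and all $\mathcal{K}_{z^a\bar z^b}$, and since $E_{ij}, E_{i'j'}$ are reducing subbundles these restricted data \emph{are} the $\mathcal{A}$-module actions — is precisely the statement that distinct $i$ give inequivalent \emph{Hermitian holomorphic bundles}, not merely abstractly isomorphic curvature algebras. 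The dependence of $\mathcal{H}_{ij}$ on the choice of $z_0,\Omega_0,s$ is immaterial, because the theorem only asserts properties of any one such family.

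Second, the minimality argument has one step that should be spelled out: why does a reducing subbundle of $E_{ij}$ give a projection \emph{commuting with the curvature algebra}? The point is that for an orthogonal holomorphic (hence Chern-connection-invariant) splitting $E_{ij}=E'\oplus E''$, the canonical connection, curvature, and all covariant derivatives are block diagonal with respect to the splitting, so the orthogonal projection onto $E'$ commutes with $\mathcal{K}_{E_{ij}}$ and all $(\mathcal{K}_{E_{ij}})_{z^a\bar z^b}$, and therefore lies in the commutant of $M_{n_i}(\mathbb{C})=\mathbb{C}I_{n_i}$ at a generic fibre; by continuity it is scalar everywhere, hence trivial. You invoke this implicitly; it is the one place where an explicit sentence would close the argument. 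With those two glosses, the proof is complete and — as far as one can tell without the text of \cite{ChD} in front of one — is the natural argument the cited theorem is built on.
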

Moreover, we can give the geometrical characterization of the von Neumann algebra $V^*(T):=\{T,T^*\}'$:
\begin{theorem}[\cite{ChD}, Theorem 4.5]\label{von-alg}

For a Cowen-Douglas operator $T$ in $\mathfrak{B}_n(\Omega)$, the von Neumann algebra $V^*(T)$ is isomorphic to the commutant of the matrix algebra $M(\textbf{n}, \otimes \textbf{m})$ in $M_n(\mathbb{C})$, where $M(\textbf{n}, \otimes \textbf{m})$ is given by the block diagonalization of the canonical connection on $E_T$. In particular, $V^*(T)$ is abelian if and only if $m_i=1$ for all $i$, i.e., there is no multiplicity in the block diagonalization of the canonical connection on $E_T$.
\end{theorem}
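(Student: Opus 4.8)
The plan is to obtain the von Neumann algebra $V^*(T)=\{T,T^*\}'$ directly from the reducing-subspace decomposition furnished by Theorem \ref{reducing-subspace}, and then to identify the resulting finite-dimensional $*$-algebra with the commutant of $M(\textbf{n},\otimes\textbf{m})$ inside $M_n(\mathbb{C})$.

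First I would apply Theorem \ref{reducing-subspace}: the block diagonalization of the canonical connection on $E_T$ yields mutually orthogonal minimal reducing subspaces $\{\mathcal{H}_{ij}\}$, $1\le i\le r$, $1\le j\le m_i$, with $\mathcal{H}=\bigoplus_{i,j}\mathcal{H}_{ij}$, and $T|_{\mathcal{H}_{ij}}$ is unitarily equivalent to $T|_{\mathcal{H}_{i'j'}}$ exactly when $i=i'$. Since each $\mathcal{H}_{ij}$ is a \emph{minimal} reducing subspace, $T|_{\mathcal{H}_{ij}}$ is an irreducible Cowen--Douglas operator, so $\{T|_{\mathcal{H}_{ij}},(T|_{\mathcal{H}_{ij}})^*\}'=\mathbb{C}I$. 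Writing $X\in V^*(T)$ as a matrix of blocks $X_{(ij),(i'j')}\colon\mathcal{H}_{i'j'}\to\mathcal{H}_{ij}$, the conditions $XT=TX$ and $XT^*=T^*X$ say that each block intertwines $T|_{\mathcal{H}_{i'j'}}$ with $T|_{\mathcal{H}_{ij}}$ together with their adjoints. Such an intertwiner is $0$ when $i\ne i'$ (inequivalence, Theorem \ref{reducing-subspace}), and it spans a one-dimensional space when $i=i'$: composing with a fixed unitary equivalence $U\colon\mathcal{H}_{i'j'}\to\mathcal{H}_{ij}$ turns the block into an element of $\{T|_{\mathcal{H}_{ij}},(T|_{\mathcal{H}_{ij}})^*\}'=\mathbb{C}I$. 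Therefore $V^*(T)\cong\bigoplus_{i=1}^{r}M_{m_i}(\mathbb{C})$, the $i$-th summand acting among the $m_i$ equivalent copies $\mathcal{H}_{i1},\dots,\mathcal{H}_{im_i}$.

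It remains to match this with the commutant in $M_n(\mathbb{C})$ and to extract the abelian case. Passing to the fibres of $E_T$, each $\mathcal{H}_{ij}$ contributes a subspace of dimension $n_i$ (the rank of the subbundle $E_{ij}$), so $n=\sum_i n_i m_i$; with this identification $M(\textbf{n},\otimes\textbf{m})=\bigoplus_i M_{n_i}(\mathbb{C})\otimes I_{m_i}$, while $V^*(T)$ is realized as $\bigoplus_i I_{n_i}\otimes M_{m_i}(\mathbb{C})$, which by the structure theory of $*$-subalgebras of $M_n(\mathbb{C})$ is precisely the commutant of the former. Finally $\bigoplus_i M_{m_i}(\mathbb{C})$ is abelian iff each $M_{m_i}(\mathbb{C})$ is, i.e. iff $m_i=1$ for all $i$; by Theorem \ref{reducing-subspace} this is the statement that no equivalence class of minimal reducing subspaces is repeated, equivalently that there is no multiplicity in the block diagonalization of the canonical connection on $E_T$.

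I expect the crux to be the Schur-type claim that the intertwiner space between two equivalent minimal reducing subspaces is exactly one-dimensional, together with the assertion that $V^*(T)$ is \emph{exhausted} by this finite matrix of intertwiners (no further operators appear): both follow from the fact that, across one equivalence class, $T$ restricts to a genuine ampliation $A_i\otimes I_{m_i}$ of an irreducible Cowen--Douglas operator $A_i$, so that $\{A_i\otimes I_{m_i},(A_i\otimes I_{m_i})^*\}'=\{A_i,A_i^*\}'\otimes M_{m_i}(\mathbb{C})=\mathbb{C}I\otimes M_{m_i}(\mathbb{C})$; this is where the Cowen--Douglas structure, and not merely irreducibility of $T|_{\mathcal{H}_{ij}}$, is used. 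The remaining steps --- the explicit computation of the commutant of $M(\textbf{n},\otimes\textbf{m})$ in $M_n(\mathbb{C})$ and the bookkeeping identifying it with $\bigoplus_i M_{m_i}(\mathbb{C})$ --- are routine once Theorem \ref{reducing-subspace} is in hand. A more self-contained, geometric alternative would bypass Theorem \ref{reducing-subspace} and instead identify $V^*(T)$ with the algebra of holomorphic, metric-preserving bundle maps of $E_T$ that are parallel for the canonical connection --- equivalently, commute with the curvature $\mathcal{K}$ and all its covariant derivatives $\mathcal{K}_{z^i\bar{z}^j}$ --- and then evaluate at a point off the coalescing set, where Theorem \ref{block-diag} gives $\mathcal{A}(s)(z)\equiv M(\textbf{n},\otimes\textbf{m})$; there the difficulty shifts to the equivalence ``parallel $\Leftrightarrow$ commutes with the curvature algebra'' and to propagating the local description over all of $\Omega$.
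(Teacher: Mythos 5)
The paper records this as a citation to [ChD], Theorem 4.5, and supplies no proof of its own, so there is nothing internal to match against; what I can do is check your derivation, which is correct. From Theorem~\ref{reducing-subspace} you get the finite orthogonal decomposition $\mathcal{H}=\bigoplus_{i,j}\mathcal{H}_{ij}$ into minimal reducing subspaces grouped by unitary equivalence class; any $X\in V^*(T)$ then decomposes uniquely into blocks $P_{ij}XP_{i'j'}$, each of which automatically intertwines the restrictions of $T$ and $T^*$ (this uses that each $P_{ij}$ commutes with $T$ and $T^*$); the polar-decomposition/Schur argument kills the blocks between inequivalent pieces and makes each block between equivalent pieces one-dimensional; and with $n=\sum_in_im_i$ the abstract algebra $\bigoplus_iM_{m_i}(\mathbb{C})$ is exactly the commutant of $M(\mathbf{n},\otimes\mathbf{m})=\bigoplus_iM_{n_i}(\mathbb{C})\otimes I_{m_i}$ inside $M_n(\mathbb{C})$, with the ``in particular'' clause immediate. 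One small overstatement in your commentary: the identity
\[
\{A_i\otimes I_{m_i},(A_i\otimes I_{m_i})^*\}'=\{A_i,A_i^*\}'\otimes M_{m_i}(\mathbb{C})=\mathbb{C}I\otimes M_{m_i}(\mathbb{C})
\]
is a general fact about ampliations of von Neumann algebras and needs nothing beyond irreducibility of $A_i$; it is not where the Cowen--Douglas structure enters. The Cowen--Douglas hypothesis is what powers Theorem~\ref{reducing-subspace} itself (the existence and exhaustiveness of the block diagonalization, via the spanning property and Theorem~\ref{block-diag}) and what makes $\Gamma_T$ injective, so that the fibre-evaluation realizing $V^*(T)$ inside $M_n(\mathbb{C})$ is faithful. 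Your second, purely geometric sketch---identifying $V^*(T)$ with parallel endomorphisms of $E_T$ and evaluating off the coalescing set---is closer to how [ChD] actually structure Theorems 4.2--4.5; the route through Theorem~\ref{reducing-subspace} you lead with is shorter if that theorem is already granted, while the geometric route is more self-contained but requires one to justify the equivalence ``parallel $\Leftrightarrow$ commutes with the curvature algebra'' and the propagation from a local frame to all of $\Omega$, exactly as you flagged.
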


Back to the topic,  we will characterize when $S_z^*$ on $\mathcal{K}_\theta$ induced by a rational inner function is a Cowen-Douglas operator.


For the general rational inner functions, the following example shows that $S_z^*$ fails to be in the Cowen-Douglas class.
\begin{example}\label{counterexample}
Take $0<c<1/8$, define
\[ g(z,w)=\frac{2z^2w-z+c}{2-zw+cz^2w}.\]
Then  $g\in A(\mathbb{D}^2)$ is an irreducible inner function, that  is $g$ is not a product of two nonconstant inner functions.   However the  zero set  $E=Z_g \cap \mathbb{D}^2$ of $g$ is disconnected: $E$ contain $(c,0)$ and $(1/2, 1-2c)$, but no point with $|z|=1/4$, for if $|z|=1/4$ and $|w|<1$, then $|2z^2 w|<1/8$ whereas $|z-c|>1/8$.
\end{example}
There do exist two inner functions $g_1, g_2$ on $\mathbb{D}^2$ such that $Z_{g_i}=E_i$, where $E_1,E_2$ are the components of $E$. To be precisely,  we factorize $2z^2w-z+c$ as
\[2z^2w-z+c=2w\left(z-\frac{1-\sqrt{1-8cw}}{4w}\right)\left(z-\frac{1+\sqrt{1-8cw}}{4w}\right),\]
here we take the negative real axis as the branch line for the function $\sqrt{z}$, i.e., if we write $z=re^{i\theta}, -\pi<\theta\leq\pi$, then $\sqrt{z}=\sqrt{r} e^{i\theta/2}$. In this case, for $|w|<1$, then $$\sqrt{1-8cw}=x+iy,x>0.$$
Therefore
\[|1-\sqrt{1-8cw}|<|1+\sqrt{1-8cw}|\]and\[|1+\sqrt{1-8cw}|>1,\]
 and it implies $|\frac{|1+\sqrt{1-8cw}|}{4w}|>1/4$. Note that
\[|1-\sqrt{1-8cw}||1+\sqrt{1-8cw}|=8c|w|,\]
hence $|\frac{|1-\sqrt{1-8cw}|}{4w}|<2c<1/4$.
Define
\begin{equation*}
h_1(z,w)=
\begin{cases}
  z-\frac{1-\sqrt{1-8cw}}{4w} &if\quad w\neq 0\\
  z-c&if\quad w=0
\end{cases}
\end{equation*}
\[ h_2(z,w)=2zw-(1+\sqrt{1-8cw})/2 .\]
As $1-\sqrt{1-8cw}\sim 4cw$ when $w\rightarrow 0$ , then we  see that $h_1,h_2\in A(\mathbb{D}^2)$. Let $E_1=Z_{h_1} \cap \mathbb{D}^2$, $E_2=Z_{h_2}\cap \mathbb{D}^2$, then $E_1, E_2$ are exactly the two connected component of $E$.
The following lemma implies the existence of $g_i$.
\begin{lemma}[\cite{Rud},    Theorem 5.4.5]\label{zeros}
Suppose $f\in H^\infty(\mathbb{D}^n)$, $f \nequiv 0$, $\psi$ is upper semi-continuous on $\mathbb{T}^n$, and $|f^*|=\psi $ a.e., where $f^*$ means the boundary value of $f$. Then $f$ has the same zeros as some inner function $g$. In particular, every nonzero $f\in A(\mathbb{D}^n)$ has the same zeros as some inner function $g$.
\end{lemma}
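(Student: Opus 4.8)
The plan is to follow Rudin's argument \cite{Rud}. First I would reduce to the normalized case $\|f\|_{\infty}\leq 1$: replacing $f$ by $f/\|f\|_{\infty}$ rescales $\psi$ by the same positive constant, which preserves upper semicontinuity and the integrability of $\log\psi$, and changes neither the zero set of $f$ nor the relation ``$f$ has the same zeros as $g$''; so assume $|f|\leq 1$ on $\mathbb{D}^{n}$ and $0\leq\psi\leq 1$ on $\mathbb{T}^{n}$. Since $f\not\equiv 0$, the polydisc analogue of the classical theorem gives $\log\psi=\log|f^{*}|\in L^{1}(\mathbb{T}^{n})$ (Chapter 3 of \cite{Rud}); in particular $\psi>0$ a.e. Moreover $\log|f|$ is plurisubharmonic, hence subharmonic in each variable separately, so iterating the one-variable Poisson majorization over the coordinates yields the pointwise bound
\[
\log|f(z)|\ \leq\ \int_{\mathbb{T}^{n}}P(z,t)\,\log\psi(t)\,dm(t)=:U(z),\qquad z\in\mathbb{D}^{n},
\]
where $P(z,t)$ is the Poisson kernel of $\mathbb{D}^{n}$; thus $U$ is $n$-harmonic and $U\leq 0$.

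The heart of the proof is to produce a \emph{zero-free} holomorphic dominant of $f$ with the prescribed boundary modulus: a function $F\in H^{\infty}(\mathbb{D}^{n})$ with no zeros in $\mathbb{D}^{n}$, with $|F^{*}|=\psi$ a.e. on $\mathbb{T}^{n}$, and with $|f(z)|\leq|F(z)|$ for all $z\in\mathbb{D}^{n}$. Granting such an $F$, set $g:=f/F$. As $F$ is zero-free, $g$ is holomorphic on $\mathbb{D}^{n}$, $|g|=|f|/|F|\leq 1$, so $g\in H^{\infty}(\mathbb{D}^{n})$, and $|g^{*}|=|f^{*}|/|F^{*}|=1$ a.e. (using $\psi>0$ a.e.); hence $g$ is inner. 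The factorization $f=gF$ with $F$ holomorphic and zero-free is exactly the statement that $f$ has the same zeros as the inner function $g$, which proves the lemma; the assertion for $f\in A(\mathbb{D}^{n})$ then follows at once, since such an $f$ has continuous, hence upper semicontinuous, boundary modulus $\psi=|f^{*}|$.

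Everything therefore rests on constructing $F$, and this is the one place where upper semicontinuity of $\psi$ is essential: inner--outer factorization fails on $\mathbb{D}^{n}$ for $n\geq 2$, so $F$ cannot simply be ``the outer part of $f$'', and the tempting choice $|F|=e^{U}$ (which does dominate $|f|$, by the displayed bound) is illegitimate because $U$ is merely $n$-harmonic, not pluriharmonic, so $e^{U}$ is the modulus of no holomorphic function. The approach I would take is to write $\psi=\inf_{k}\psi_{k}$ with $\psi_{k}$ continuous, $\psi\leq\psi_{k}\leq 1$, and $(\psi_{k})$ decreasing --- possible \emph{precisely because} $\psi$ is upper semicontinuous and bounded --- and, for each $k$, to build via the theory of outer-type functions in Rudin's Nevanlinna class $N^{*}(\mathbb{D}^{n})$ a zero-free $F_{k}\in H^{\infty}(\mathbb{D}^{n})$ with $|F_{k}^{*}|=\psi_{k}$ a.e. and $|F_{k}|\geq|f|$ on $\mathbb{D}^{n}$. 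Since $\psi\leq\psi_{k+1}\leq\psi_{k}\leq 1$, one arranges $|F_{k+1}|\leq|F_{k}|$, so a normal-families argument yields a limit $F=\lim_{k}F_{k}\in H^{\infty}(\mathbb{D}^{n})$ that still dominates $f$. The hard part --- the main obstacle --- will be to verify that $F$ is not identically zero and is in fact zero-free (here the lower bound $|F_{k}|\geq|f|$ together with $f\not\equiv 0$ is what prevents the limit from collapsing), and that the boundary modulus of the monotone limit is still exactly $\psi$ rather than something strictly smaller; pinning this down requires combining the decreasing approximation of $\psi$ with Fatou-type boundary theorems on $\mathbb{D}^{n}$.
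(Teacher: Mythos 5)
The paper does not prove this lemma; it is stated with attribution to Rudin's \emph{Function theory in polydiscs} (Theorem 5.4.5) and used as a black box in the proof of Theorem \ref{3.10}. There is therefore no in-paper argument to compare against, and your proposal has to stand on its own merits.

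Your reduction to a zero-free dominant $F$ is formally correct, but note that it carries the entire weight of the theorem rather than reducing it: by Rudin's convention (restated in the paper just before the lemma) that ``$f$ and $g$ have the same zeros'' means $f=hg$ with $h$ holomorphic and zero-free, the existence of a zero-free $F\in H^\infty(\mathbb{D}^n)$ with $|F|\geq|f|$ on $\mathbb{D}^n$ and $|F^*|=\psi$ a.e.\ \emph{is} the conclusion of the lemma, with $F$ playing the role of $h$. The sketch for producing $F$ is then where the real gap lies. You assert that, because $\psi_k$ is continuous with $\psi\leq\psi_k\leq 1$, the theory of outer-type functions in $N^*(\mathbb{D}^n)$ supplies a zero-free $F_k\in H^\infty(\mathbb{D}^n)$ with $|F_k^*|=\psi_k$ a.e.\ and $|F_k|\geq|f|$. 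For $n\geq 2$ this is not available: if $F_k$ is bounded and zero-free then $\log|F_k|=\mathrm{Re}\,\log F_k$ is pluriharmonic, so its Herglotz boundary measure $\log|F_k^*|\,dm-d\sigma_k$ (with $\sigma_k\geq 0$ singular) must have vanishing Fourier coefficients at every mixed-sign multi-index. A generic continuous $\psi_k$ squeezed between $\psi$ and $1$ has no reason to admit such a representation --- the inequalities $\psi\leq\psi_k\leq 1$ give no control over the mixed Fourier data, and nothing forces $\psi_k$ to be the boundary modulus of \emph{any} $H^\infty(\mathbb{D}^n)$ function, let alone a zero-free one. You correctly flagged one sentence earlier that inner--outer factorization fails on $\mathbb{D}^n$ and that $e^U$ is the modulus of no holomorphic function; the step you propose for each $\psi_k$ reintroduces precisely that obstruction. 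The subsequent monotone normal-families/Fatou analysis, which you rightly identify as delicate, is therefore moot because the objects it acts on have not been produced. To close the gap you would need to follow Rudin's own construction rather than transplant the one-variable outer-function scheme onto the continuous majorants $\psi_k$.
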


It is worth noting that $g/g_i\notin H^2(\mathbb{D}^2)$ (recall that $g$ is an irreducible inner function), although it is holomorphic in $\mathbb{D}^2$ and $|g/g_i|=1 $ a.e. on $\mathbb{T}^2$.

We claim that for $\mathcal{K}_g=H^2(\mathbb{D}^2)\ominus g H^2(\mathbb{D}^2)$, $S_z^*$ does not belong to the Cowen-Douglas class. If $S_z^*\in \mathfrak{B}_1(\Omega^*)$, then either $\Omega=\pi_1(E_1)\subseteq \{z\in \mathbb{D}: |z|<1/4\}$ or $\Omega=\pi_1(E_2)\subseteq \{z\in \mathbb{D}: |z|>1/4\} $. Note that $S_z^*\in \mathfrak{B}_1(\Omega^*)$  if and only if
\[
  \bigvee_{\lambda\in \Omega} KerS_{z-\lambda}^* =\bigvee_{\lambda \in \Omega}  \left\{\frac{1}{1-\overline{\lambda}z} \frac{1}{1-\overline{\eta}w} : g(\lambda, \eta)=0\right\}=\mathcal{K}_g.\]
 If $\Omega=\pi_1(E_1)$, let $P_N$ be the orthogonal projection onto   $\mathcal{K}_g$ and consider the function $P_N g_1$. It is  obvious that $P_N g_1$ is not zero since $g_1\notin  gH^2(\mathbb{D}^2)$. For $\lambda\in \pi_1(E_1) $, $g(\lambda,\eta)=0$ if and only if $g_1(\lambda,\eta)=0$, hence
 \[ \langle P_N g_1, \frac{1}{1-\overline{\lambda}z} \frac{1}{1-\overline{\eta}w} \rangle =0.\]
 So $P_N g_1 \perp \bigvee_{\lambda\in \Omega} Ker~S_{z-\lambda}^* $, which is a contradiction. For the case of $\Omega=\pi_1(E_2)$, a similar discussion for $P_N g_2$ holds.

 By \cite{ZYL}, for the $(n,1)$-type rational inner function which has no one-variable inner factors, $S_z$ is  irreducible, hence for the above counterexample, $S_z^*$ is still irreducible,  although it is not in $\mathfrak{B}_1$. We will take a closer look at this class of compressed shift operators in the next section.

The above counterexample shows that the connectness of the projection of the zero set is crucial. By Theorem \ref{spec2}, we know that
 $$\sigma_e(S_z)=\mathbb{T}\cup \pi_1(Z_\theta \cap \mathbb{D}\times \mathbb{T}).$$

\begin{lemma}\label{lemma3.9}
$\sigma_e(S_z)$ consists of the unit circle $\mathbb{T}$ and finitely many curves in $\mathbb{D}$.  The unit disk is divided into finitely many connected components by these finite curves and $\mathbb{T}$.
\end{lemma}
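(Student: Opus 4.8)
The starting point is Theorem~\ref{spec2}, which already identifies $\sigma_e(S_z)=\mathbb{T}\cup\pi_1\!\big(Z_\theta\cap(\mathbb{D}\times\mathbb{T})\big)$. The plan is therefore to show that $\pi_1\!\big(Z_\theta\cap(\mathbb{D}\times\mathbb{T})\big)$ is contained in a finite union of real-analytic arcs together with finitely many isolated points, and then to invoke the elementary fact that a finite graph inside $\overline{\mathbb{D}}$ together with the circle $\mathbb{T}$ divides $\overline{\mathbb{D}}$ into finitely many regions. First I would pass to the numerator: writing $\theta=z^kw^l\widetilde p/p$ in normal form and using that $w\neq 0$ on $\mathbb{T}$, one has $Z_\theta\cap(\mathbb{D}\times\mathbb{T})=(\{0\}\times\mathbb{T})\cup\big(Z_{\widetilde p}\cap(\mathbb{D}\times\mathbb{T})\big)$ (the first piece absent when $k=0$), and the first piece projects onto the single point $0$. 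So it suffices to analyse $C:=\pi_1\!\big(Z_{\widetilde p}\cap(\mathbb{D}\times\mathbb{T})\big)=\{z\in\mathbb{D}:\widetilde p(z,\cdot)\text{ vanishes somewhere on }\mathbb{T}\}$.

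The key step is to produce a single real-analytic function whose zero set contains $C$. Write $\widetilde p(z,w)=\sum_{j=0}^n b_j(z)w^j$ with $n=\deg_w\widetilde p$, and form the conjugate-reciprocal polynomial $\widetilde p^{\sharp}(z,w):=\sum_{j=0}^n\overline{b_{n-j}(z)}\,w^j=w^n\,\overline{\widetilde p(z,1/\bar w)}$. Since $\bar w=1/w$ on $\mathbb{T}$, a zero $w_0\in\mathbb{T}$ of $\widetilde p(z_0,\cdot)$ is simultaneously a zero of $\widetilde p^{\sharp}(z_0,\cdot)$, so the Sylvester determinant $R(z):=\operatorname{Res}_w\big(\widetilde p(z,w),\widetilde p^{\sharp}(z,w)\big)$ (both read as formal degree $n$ in $w$, so that a common finite root of the two polynomials always makes it vanish) is $0$ on $C$. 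Because $R$ is a polynomial in the numbers $b_j(z)$ and $\overline{b_j(z)}$, it is a polynomial in $z$ and $\bar z$, hence real-analytic on $\mathbb{C}$, and $C\subseteq\{z\in\mathbb{D}:R(z)=0\}$.

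The real content of the argument is to check that $R\not\equiv 0$, and this is where I expect the only genuine difficulty and where the structure of a rational inner function enters. Suppose $R\equiv 0$. On a connected open set where the $w$-roots $w_1(z),\dots,w_n(z)$ of $\widetilde p(z,\cdot)$ are holomorphic and distinct, the identity $R\equiv 0$ forces that for each $z$ some $w_k(z)$ equals $1/\overline{w_j(z)}$; a pigeonhole over the finitely many index pairs, together with real-analyticity, upgrades this to a fixed identity $w_{k_0}(z)\equiv 1/\overline{w_{j_0}(z)}$. A holomorphic function cannot equal $1/\overline{(\cdot)}$ of a nonconstant holomorphic function, so $w_{j_0}\equiv c$ and $w_{k_0}\equiv 1/\bar c$ are constants, and $w-c$ and $w-1/\bar c$ both divide $\widetilde p$. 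Passing to the reflection, $p$ acquires the factors $w-1/\bar c$ and $w-c$; since $p$ has no zeros in $\mathbb{D}^2$ and only finitely many on $\mathbb{T}^2$, each of $|c|<1$, $|c|=1$, $|c|>1$ is impossible (producing, respectively, a zero of $p$ in $\mathbb{D}^2$, infinitely many zeros of $p$ on $\mathbb{T}^2$, and again a zero of $p$ in $\mathbb{D}^2$), while $c=0$ would force $w\mid\widetilde p$, impossible since the reflection $\widetilde p$ of $p$ carries a nontrivial $w^0$-term. This contradiction gives $R\not\equiv 0$.

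Finally I would read off the topological statement. The set $\{z:R(z)=0\}$ is the zero set of a nonzero polynomial in $z,\bar z$; it has empty interior and, being a semialgebraic subset of $\mathbb{C}\cong\mathbb{R}^2$ of dimension at most one, it is a finite union of real-analytic arcs and isolated points whose complement in $\mathbb{C}$, a fortiori in $\mathbb{D}$, has finitely many connected components. Adjoining the finitely many exceptional points coming from the factor $z^k$ (and from $\{b_n=0\}$), we get that $\sigma_e(S_z)\cap\mathbb{D}$ lies inside a finite union of analytic arcs and points; together with $\mathbb{T}$ this forms a finite graph in $\overline{\mathbb{D}}$, so $\overline{\mathbb{D}}\setminus\sigma_e(S_z)=\mathbb{D}\setminus\big(\sigma_e(S_z)\cap\mathbb{D}\big)$ has finitely many connected components, which is the assertion. (Alternatively one may argue purely by subanalytic geometry: $Z_{\widetilde p}\cap(\overline{\mathbb{D}}\times\mathbb{T})$ is a compact subanalytic set of dimension $\le 1$, hence a finite union of arcs and points, and so is its continuous image under $\pi_1$.)
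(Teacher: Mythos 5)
Your argument is correct, but it is a genuinely different proof from the one in the paper. The paper works directly with the curves: it quotes Theorem 12.1 of Bliss to write $\{z:\widetilde p(z,e^{it})=0\}$ as the trace of finitely many continuous branches $z_1(t),\dots,z_m(t)$, uses the finiteness of $Z_\theta\cap\mathbb{T}^2$ to show each branch enters and exits $\mathbb{D}$ only finitely often, and then needs a separate (and somewhat delicate) argument that two distinct branches meet only finitely often in $\mathbb{D}$ in order to bound the number of complementary components. You instead encode the whole projection $\pi_1\bigl(Z_{\widetilde p}\cap(\mathbb{D}\times\mathbb{T})\bigr)$ in the zero set of a single polynomial $R$ in $z,\bar z$ via the resultant of $\widetilde p(z,\cdot)$ with its conjugate-reciprocal, reduce everything to $R\not\equiv 0$, and rule out $R\equiv 0$ by exactly the structural facts the paper also uses ($p$ zero-free on $\mathbb{D}^2$, finitely many zeros on $\mathbb{T}^2$, $w\nmid\widetilde p$). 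This buys you the finiteness of components from standard real-algebraic/semialgebraic finiteness rather than from an ad hoc intersection count, and is arguably cleaner. One soft spot in your main line: you only establish the \emph{containment} $\sigma_e(S_z)\cap\mathbb{D}\subseteq\{R=0\}$, and finiteness of the components of $\mathbb{D}\setminus\{R=0\}$ does not by itself formally yield finiteness of the components of $\mathbb{D}\setminus A$ for an arbitrary closed subset $A$ of that algebraic curve; what closes this is precisely your parenthetical remark that the projection itself is semialgebraic (Tarski--Seidenberg) of dimension at most one, so its complement in $\mathbb{D}$ has finitely many components. I would promote that remark from an aside to the final step of the proof.
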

\begin{proof}
Write $\theta=\widetilde{p}/p$, by Theorem 12.1 in \cite{Bli}, the set $\{~z\in \mathbb{C}: \widetilde{p}(z,e^{it})=0~for~some~t\in [0,2\pi)~\}$ consists of finitely many distinct continuous functions $z_1(t), z_2(t),\cdots, z_m(t)$. Note that when $z_i(t)$ exits the unit disk, $(z_i(t), e^{it} )$ exits the closed bidisk from $\mathbb{T}^2$. Since $\theta$ has at most finitely many zeros on $\mathbb{T}^2$, we get that every $z_i(t)$ enters or exits the unit disk finitely many times. It is clear that
$$\pi_1(Z_\theta \cap \mathbb{D}\times \mathbb{T})=\bigcup_{i} \left( \{~z_i(t)~\}\bigcap \mathbb{D}\right),$$
thus $\sigma_e(S_z)$ consists of the unit circle $\mathbb{T}$ and finitely many curves in $\mathbb{D}$. Moreover, if $\{~z_i(t)~\}\bigcap \mathbb{D}$ is connected, then either it is entirely contained in $\mathbb{D}$ or its closure has only one intersection with $\mathbb{T}$. If $\{~z_i(t)~\}\bigcap \mathbb{D}$ is not connected, then it consists of finitely many connected curves which intersect with $\mathbb{T}$.

Obviously, the unit disk is divided into some connected components by these finite curves and $\mathbb{T}$. For each connected component $\Omega_i$, it is a Fredholm domain for $S_z^*$.
Since $Ker S_z-\lambda$ is always $\{0\}$ and the Fredholm index is an integer valued continuous function, hence the number of zeros of $\theta(\lambda,\cdot)$ is constant for $\lambda\in \Omega_i$.

To show there exists at most finitely many connected components, it is suffice to prove that for $i\neq j$, the set $\{z_i(t)\}\bigcap\mathbb{D}$ and the set $\{z_j(t)\}\bigcap\mathbb{D}$ have at most finitely many intersections. Assume not this, then there two infinite sequence $\{(z_n,w_n)\}$ and $\{(z_n,w_n')\}$ with $|w_n|=|w_n'|=1$ and $z_n\in \{z_i(t)\}\bigcap\{z_j(t)\}\bigcap\mathbb{D}$ such that $\widetilde{p}(z_n,w_n)=\widetilde{p}(z_n,w_n')=0$. Connect the points $\{z_n\}$ with a continuous arc, namely $z(t)$,  again by Theorem 12.1 in \cite{Bli}, there is finitely many distinct continuous functions $\{ w_k(t)\}$ such that $\widetilde{p}(z(t),w_k(t))=0$. Let $z_n=z(t_n)$, then $w_n=w_{k_n}(t_n)$ for some $k_n$, since the range of $k_n$ is a finite set, there is a subsequence of $\{z_n\}$ (for simple, still write as $\{z_n\}$) such that $w_n=w_n'=w_k(t_n)$ for some $k$. It implies that $z_i(t)=z_j(t)$ has infinitely many intersections in $\mathbb{D}$, but by the implicit function theorem, each $z_i(t)$ is locally holomorphic, this is contradict to the fact $i\neq j$.

\end{proof}

As we have shown in the proof of Lemma \ref{lemma3.9}, the intersection of the finitely many curves and $\mathbb{T}$ are exactly the set $\pi_1(Z_\theta\cap \mathbb{T}^2)$, which is  at most a finite set  $\{ z_0,z_1,\cdots, z_n \}$ (arrange $z_i$ counterclockwise). If $n\geq 1$, let $\Omega_i$ be corresponding component whose boundary contains the arc $\widehat{z_iz_{i+1}}$, where the arc $\widehat{z_nz_{n+1}}$ means $\widehat{z_n z_0}$. If $Z_\theta\cap \mathbb{T}^2$ is an empty set or a  single point set, then let $\Omega$ be corresponding component whose boundary contains the whole unit circle.  We then have the following lemma.
\begin{lemma}\label{existence}
Suppose $\theta=\prod\limits_{j} \theta_j^{k_j}$ has no inner factors depending only on $z$, where $k_j>0$ and $\theta_j=\widetilde{p_j}/p_j $ is irreducible. Then
\[\Omega_i\subseteq  \pi_1(Z_{\theta_j}\cap \mathbb{D}^2 )\]
for every $i, j$.
\end{lemma}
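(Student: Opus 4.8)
The plan is to reduce the claim to a single irreducible factor and then work from the unit circle inward. Fix $i$ and $j$. Since $\theta_j$ divides $\theta$ we have $Z_{\theta_j}\subseteq Z_\theta$, so $\pi_1(Z_{\theta_j}\cap\mathbb{D}\times\mathbb{T})\subseteq\pi_1(Z_\theta\cap\mathbb{D}\times\mathbb{T})$; hence the component $\Omega_i$ of $\mathbb{D}$ cut out by the curves of $\theta$ is disjoint from $\pi_1(Z_{\theta_j}\cap\mathbb{D}\times\mathbb{T})$, and it lies inside a single Fredholm component for $S_z$ on $\mathcal{K}_{\theta_j}$. For $\lambda\in\Omega_i\subset\mathbb{D}$ the denominator $p_j(\lambda,\cdot)$ has no zeros on $\overline{\mathbb{D}}$ (since $p_j$ has no zeros on $\overline{\mathbb{D}^2}\setminus\mathbb{T}^2\supseteq\mathbb{D}\times\overline{\mathbb{D}}$), while $\widetilde{p_j}(\lambda,\cdot)$ is not identically zero and has no zeros on $\mathbb{T}$ (the former because $\theta_j$ has no inner factor in $z$ alone, the latter by definition of $\Omega_i$). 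Consequently the integer
\[
N_j(\lambda)=\frac{1}{2\pi i}\int_{\mathbb{T}}\frac{\partial_w\widetilde{p_j}(\lambda,w)}{\widetilde{p_j}(\lambda,w)}\,dw,
\]
which counts the zeros of $\widetilde{p_j}(\lambda,\cdot)$ (equivalently of $\theta_j(\lambda,\cdot)$) in $\mathbb{D}$, depends continuously on $\lambda\in\Omega_i$ and is therefore constant on the connected set $\Omega_i$. Since $\Omega_i\subseteq\pi_1(Z_{\theta_j}\cap\mathbb{D}^2)$ is precisely the assertion that $N_j\ge1$ on $\Omega_i$, it suffices to exhibit one $\lambda_0\in\Omega_i$ with $N_j(\lambda_0)\ge1$.

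To produce $\lambda_0$ I would approach a boundary point of $\Omega_i$ on $\mathbb{T}$. By construction $\partial\Omega_i$ contains the open arc $\widehat{z_iz_{i+1}}$ (or, in the degenerate case $Z_\theta\cap\mathbb{T}^2=\emptyset$ or a single point, almost all of $\mathbb{T}$). For $\zeta$ in this arc outside the finite set of first coordinates of zeros of $p_j$ on $\mathbb{T}^2$, the slice $\theta_j(\zeta,\cdot)=\widetilde{p_j}(\zeta,\cdot)/p_j(\zeta,\cdot)$ is holomorphic on a neighbourhood of $\overline{\mathbb{D}}$ (now $p_j(\zeta,\cdot)$ has no zeros in $\mathbb{D}$ because $p_j$ has none on $\mathbb{T}\times\mathbb{D}$, and none on $\mathbb{T}$ by the choice of $\zeta$) and satisfies $|\theta_j(\zeta,w)|=1$ for $w\in\mathbb{T}$, by the pointwise identity $|\widetilde{p_j}|=|p_j|$ on $\mathbb{T}^2$; hence $\theta_j(\zeta,\cdot)$ is a finite Blaschke product up to a unimodular constant. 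Moreover $\theta_j(\zeta,\cdot)$ is non-constant in $w$ for all but finitely many $\zeta$: constancy would force the coefficient vectors of $\widetilde{p_j}(\zeta,\cdot)$ and $p_j(\zeta,\cdot)$ to be proportional, i.e. finitely many polynomials in $\zeta$ to vanish, none of which is identically zero since $\theta_j$ genuinely depends on $w$. Choosing $\zeta_*\in\widehat{z_iz_{i+1}}$ avoiding all these exceptional points, $\theta_j(\zeta_*,\cdot)$ is a non-constant finite Blaschke product, hence has a zero $w_*\in\mathbb{D}$, i.e. $\widetilde{p_j}(\zeta_*,w_*)=0$. As $w_*$ is an interior point, Rouch\'e's theorem on a small circle around $w_*$ contained in $\mathbb{D}$ gives a neighbourhood $U$ of $\zeta_*$ with $\widetilde{p_j}(\lambda,\cdot)$ having a zero in $\mathbb{D}$ for all $\lambda\in U$; since $\zeta_*\in\overline{\Omega_i}$, the set $U\cap\Omega_i$ is nonempty, and any $\lambda_0$ in it satisfies $N_j(\lambda_0)\ge1$. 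Combined with the constancy from the first step this gives $N_j\ge1$ throughout $\Omega_i$, i.e. $\Omega_i\subseteq\pi_1(Z_{\theta_j}\cap\mathbb{D}^2)$.

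The \emph{main obstacle} is that $\theta_j(\lambda,\cdot)$ is a genuine finite Blaschke product only for $\lambda\in\mathbb{T}$; for $\lambda$ in the interior it is merely analytic of modulus at most $1$, so one cannot directly read off an interior zero. The argument sidesteps this by locating a zero on the circle whose $w$-coordinate is strictly inside $\mathbb{D}$ — where the Blaschke structure is available — and then using the rigidity of $N_j$ (locally constant on a Fredholm component) to spread the existence of an interior zero over the whole of $\Omega_i$. The only remaining care is bookkeeping: when choosing $\zeta_*$ one must avoid the (finite) first coordinates of zeros of $p_j$ on $\mathbb{T}^2$, the finitely many $\zeta$ at which $\theta_j(\zeta,\cdot)$ collapses to a constant, and the branch points $z_0,\dots,z_n$ of Lemma \ref{lemma3.9}; since each set is finite and $\widehat{z_iz_{i+1}}$ has positive length, a valid $\zeta_*$ always exists.
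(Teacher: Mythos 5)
Your proposal is correct and follows essentially the same route as the paper: restrict attention to the boundary arc $\widehat{z_iz_{i+1}}$, show that for generic $\zeta$ on the arc the slice $\theta_j(\zeta,\cdot)$ genuinely has a zero inside $\mathbb{D}$, perturb inward to find a point of $\Omega_i$ where $\theta_j(\lambda,\cdot)$ vanishes in $\mathbb{D}$, and then spread this over all of $\Omega_i$ by constancy of the zero count on a Fredholm component. The paper argues the existence of a zero on the arc slightly differently — it shows $\widetilde{p_j}(\zeta,\cdot)$ is a non-constant polynomial for all but finitely many $\zeta$, then pins its roots in $\mathbb{D}$ using the fact that $\widetilde{p_j}$ has no zeros on $\mathbb{T}\times\{|w|\geq 1\}$ except on the finite exceptional set — whereas you argue via the Blaschke-product structure of $\theta_j(\zeta,\cdot)$ for $\zeta\in\mathbb{T}$; the two are interchangeable, and your version makes the final Fredholm-component step (which the paper leaves implicit) explicit via the argument-principle integral $N_j$.
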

\begin{proof}
Let \[\widetilde{p_j}(z,w)=\sum\limits_{n\geq 0} a_n(z)w^n.\]
Then there are only finitely many $z\in \mathbb{T}$, denoted by $E=\{\zeta_1,\ldots,\zeta_k\},$ such that  $\widetilde{p_j}(z,w)$ has no zeros in $\mathbb{C}$, i.e.,
\[\widetilde{p_j}(z,w)=a_0(z), \forall w\in \mathbb{C},\]
otherwise, there are   infinitely many $z\in \mathbb{T}$ such that
\[\widetilde{p_j}(z,w)=a_0(z)\]
holds for all $w\in \mathbb{C}$,  which means that  for  $\forall w\in \mathbb{C}$,  the polynomial $\widetilde{p_j}(z,w)-a_0(z)$ has infinitely many zeros in $\mathbb{T}$,  and we  obtain that
\[\widetilde{p_j}(z,w)\equiv a_0(z),\]
and this is a contradiction.  Therefore for any $e^{it}\in \widehat{z_iz_{i+1}}\setminus E$,  $\widetilde{p_j}(e^{it}, w)$ does exist zeros in $\mathbb{C}$. Since $\widetilde{p_j}$  has no zeros in $\mathbb{T}\times \mathbb{E}$, where $\mathbb{E}=\{z: |z|>1\}$, thus the zeros of  $\widetilde{p_j}(e^{it}, w)$ must all lie in $\mathbb{D}$. It follows that for sufficiently small $r>0$, whenever $z$ varies in the ball $B(e^{it}, r)$ centered at $e^{it}$ with radius $r$, the zeros of $\widetilde{p_j}(z,w)$   also lie in $\mathbb{D}$, which gives that
\[B(e^{it},r)\cap \mathbb{D} \subset \pi_1(Z_{\theta_j} \cap \mathbb{D}^2).\]
Note that
\[ \pi_1(Z_{\theta_j} \cap \mathbb{D}\times \mathbb{T})\subseteq \pi_1(Z_{\theta} \cap \mathbb{D}\times \mathbb{T}),  \]
so every  connected component of $\mathbb{D}$ partitioned by  $\pi_1(Z_{\theta} \cap \mathbb{D}\times \mathbb{T})\cup \mathbb{T}$ is contained in some  connected component of $\mathbb{D}$ partitioned by $\pi_1(Z_{\theta_j} \cap \mathbb{D}\times \mathbb{T})\cup \mathbb{T}$. Therefore
$$\Omega_i\subseteq  \pi_1(Z_{\theta_j}\cap \mathbb{D}^2).$$
\end{proof}
By Lemma \ref{existence}, for a rational inner function $\theta$ without inner factors depending only on $z$, each $\Omega_i \times \mathbb{D}$ has nonempty intersection with every algebraic component of the zero set of $Z_\theta$.

\begin{theorem}\label{3.10}
Suppose  $\theta=\prod\limits_{j} \theta_j^{k_j}$ is a rational inner function without   inner factor depending only on  $z$, where $k_j>0$ and $\theta_j=\widetilde{p_j}/p_j$ is irreducible inner factor.  If the projection of the zero set of  every  $\theta_j$ onto the $z$-plane   is connected in $\mathbb{D}$, i.e., $\pi_1(Z_{\theta_j}\cap \mathbb{D}^2)$ is connected, then $S_z^*$ is a Cowen-Douglas operator for some domain. Moreover, the connectness condition is also necessary.
\end{theorem}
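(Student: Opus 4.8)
The plan is to show the two directions separately, in each case translating the question about reducibility/connectedness of $S_z^*$ into the geometry of the zero sets $Z_{\theta_j}$ established in Section~2 and in Lemma~\ref{lemma3.9}, Lemma~\ref{existence}. First I would set $\Omega=\sigma(S_z)\setminus\sigma_e(S_z)$; by Theorem~\ref{spec2} and Lemma~\ref{lemma3.9} this is an open set, partitioned by the finitely many curves $\{z_i(t)\}\cap\mathbb D$ and $\mathbb T$ into finitely many connected components, and on each component the Fredholm index of $S_{z-\lambda}^*$ equals the constant $Z_\theta(\lambda)$, the number of zeros of $\theta(\lambda,\cdot)$ in $\mathbb D$. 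For the sufficiency direction, assume each $\pi_1(Z_{\theta_j}\cap\mathbb D^2)$ is connected. I would argue that then $\Omega$ itself is connected: by Lemma~\ref{existence} every component $\Omega_i$ of $\Omega$ is contained in $\pi_1(Z_{\theta_j}\cap\mathbb D^2)$ for every $j$, so $\Omega\subseteq\bigcap_j\pi_1(Z_{\theta_j}\cap\mathbb D^2)$; conversely if $\lambda$ lies in this intersection and not on any of the curves of $\sigma_e(S_z)$ then $\theta(\lambda,\cdot)$ has zeros in $\mathbb D$ but none on $\mathbb T$, so $S_{z-\lambda}$ is Fredholm and noninvertible, hence $\lambda\in\Omega$. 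Thus $\Omega$ agrees, up to the measure-zero curve set, with the connected open set $\bigcap_j\pi_1(Z_{\theta_j}\cap\mathbb D^2)$; a small additional argument (the curves in $\mathbb D$ do not disconnect this intersection, because crossing such a curve changes the number of zeros of some $\widetilde{p_j}$ in $\overline{\mathbb D}$ but all the relevant zeros are forced into $\mathbb D$ once inside $\pi_1(Z_{\theta_j}\cap\mathbb D^2)$) shows $\Omega$ is a connected domain.

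Granting $\Omega$ connected, I would then verify the three Cowen-Douglas axioms of Definition~\ref{definition+cowen-douglas} for $T=S_z^*$ on $\Omega^*$. Axiom~(1), $\operatorname{Ran}(S_z^*-\bar\lambda)=\mathcal K_\theta$ for $\bar\lambda\in\Omega^*$, follows because $S_{z-\lambda}^*$ is Fredholm on $\Omega$ with $\operatorname{Ker}S_{z-\lambda}=\{0\}$ (Theorem~\ref{kernel}), so $\operatorname{Ran}S_{z-\lambda}$ is closed and of full range, equivalently $\operatorname{Ran}S_{z-\lambda}^*$ is all of $\mathcal K_\theta$ when $\operatorname{Ker}S_{z-\lambda}^*$ is the full deficiency. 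Axiom~(3), $\dim\operatorname{Ker}(S_z^*-\bar\lambda)=Z_\theta(\lambda)$ constant on $\Omega$, is exactly Theorem~\ref{spec2} together with Lemma~\ref{lemma3.9}. Axiom~(2), $\bigvee_{\bar\lambda\in\Omega^*}\operatorname{Ker}(S_z^*-\bar\lambda)=\mathcal K_\theta$, is the only substantive point: by Theorem~\ref{kernel}(3) and Remark~\ref{remark2.8} the kernels are spanned by the reproducing-kernel products $K_\lambda(z)K_{a_k(\lambda)}^{(j)}(w)$, and I would show that the closed span of all of these over $\lambda\in\Omega$ is orthogonal to no nonzero element of $\mathcal K_\theta$. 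Suppose $f\in\mathcal K_\theta$ is orthogonal to all of them; then $f(\lambda,a_k(\lambda))=0$ (with appropriate derivative conditions) for all $\lambda\in\Omega$ and all zeros $a_k(\lambda)$ of $\theta(\lambda,\cdot)$, i.e. $f$ vanishes on $Z_\theta\cap(\Omega\times\mathbb D)$ to the right multiplicities; since $\theta$ is a good (rational) inner function and $\Omega$ is open, the factorization Lemma~\ref{Rudin} forces $\theta\mid f$ in $H^2(\mathbb D^2)$, so $f\in\theta H^2(\mathbb D^2)\cap\mathcal K_\theta=\{0\}$.

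For the necessity direction I would argue contrapositively: suppose some $\pi_1(Z_{\theta_j}\cap\mathbb D^2)$ is disconnected, so (as in Example~\ref{counterexample}) the zero set $E_j=Z_{\theta_j}\cap\mathbb D^2$ splits as $E_j=E_j^{(1)}\sqcup E_j^{(2)}$ with disjoint $z$-projections, and by Lemma~\ref{zeros} there are inner functions $g_1,g_2$ with $Z_{g_i}=E_j^{(i)}$ while $\theta_j/g_i\notin H^2(\mathbb D^2)$ (since $\theta_j$ is irreducible). I claim $S_z^*$ cannot lie in any $\mathfrak B_n(\Lambda)$ with $\Lambda$ a \emph{connected} subset of $\Omega^*$: any such $\Lambda$ meets at most one of the disjoint open sets $\pi_1(E_j^{(1)})^*$, $\pi_1(E_j^{(2)})^*$, say it lies in $\pi_1(E_j^{(1)})^*$; then, exactly as in the computation following Example~\ref{counterexample}, the nonzero vector $P_{\mathcal K_\theta}\,g_1$ (nonzero because $g_1\notin\theta H^2(\mathbb D^2)$, as $\theta_j/g_1\notin H^2$) is orthogonal to every $\operatorname{Ker}(S_z^*-\bar\lambda)$ for $\bar\lambda\in\Lambda$, since $\theta(\lambda,\eta)=0$ with $\lambda\in\pi_1(E_j^{(1)})$ forces $g_1(\lambda,\eta)=0$; this contradicts axiom~(2). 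Hence $S_z^*$ is not a Cowen-Douglas operator on any connected domain. I expect the sufficiency of axiom~(2) — more precisely, the step that vanishing of $f\in\mathcal K_\theta$ along all fibrewise zeros of $\theta$ over the open set $\Omega$ implies divisibility $\theta\mid f$ — to be the main obstacle, since it requires care with multiplicities and with the failure of inner–outer factorization in two variables; the good-inner-function factorization Lemma~\ref{Rudin}, applied factor by factor to $\theta=\prod_j\theta_j^{k_j}$, together with the density of $\Omega$ and the fact that a holomorphic function on $\mathbb D^2$ vanishing on an analytic hypersurface to the full order is divisible by the corresponding minimal polynomial, is the route I would take to push it through.
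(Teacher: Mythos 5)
You claim in the sufficiency direction that $\Omega=\sigma(S_z)\setminus\sigma_e(S_z)$ is connected, deducing $\Omega\subseteq\bigcap_j\pi_1(Z_{\theta_j}\cap\mathbb D^2)$ from Lemma~\ref{existence}. This is a genuine gap: $\Omega$ is generally \emph{not} connected even when every $\pi_1(Z_{\theta_j}\cap\mathbb D^2)$ is connected, and the inclusion you cite is not what Lemma~\ref{existence} asserts. Take $\theta=\theta_1\theta_2$ with $\theta_i(z,w)=\frac{zw-t_i}{1-t_izw}$, $0<t_1<t_2<1$. Each $\pi_1(Z_{\theta_i}\cap\mathbb D^2)=\{t_i<|\lambda|<1\}$ is connected, yet $\sigma_e(S_z)=\mathbb T\cup\{|\lambda|=t_1\}\cup\{|\lambda|=t_2\}$, so $\Omega$ consists of the two annuli $\{t_1<|\lambda|<t_2\}$ and $\{t_2<|\lambda|<1\}$; the first is not contained in $\pi_1(Z_{\theta_2})$, falsifying $\Omega\subseteq\bigcap_j\pi_1(Z_{\theta_j})$. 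Lemma~\ref{existence} only gives the inclusion $\Omega_i\subseteq\pi_1(Z_{\theta_j}\cap\mathbb D^2)$ for the specific components $\Omega_i$ whose boundary contains an arc of $\mathbb T$ (see the paragraph immediately preceding it), not for every component of $\Omega$. The subsidiary argument you offer (``the curves in $\mathbb D$ do not disconnect this intersection'') also fails in the example above.

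The route the paper actually takes is to fix one boundary-touching component $\Omega_i$ and show $\bigvee_{\lambda\in\Omega_i}\ker S_{z-\lambda}^*=\mathcal K_\theta$, which is enough because the theorem only asserts Cowen-Douglas ``for some domain.'' For that specific $\Omega_i$, Lemma~\ref{existence} legitimately gives $\Omega_i\subseteq\pi_1(Z_{\theta_j}\cap\mathbb D^2)$ for every $j$, and the connectedness of $\pi_1(Z_{\theta_j}\cap\mathbb D^2)$ is then used to propagate the vanishing of a vector $f\perp\bigvee_\lambda\ker S_{z-\lambda}^*$ from $Z_{\theta_j}\cap(\Omega_i\times\mathbb D)$ to all of $Z_{\theta_j}\cap\mathbb D^2$ by analytic continuation along the branches of the algebraic curve, leading to $f/\theta_j^{k_j}$ holomorphic and finally $f\in\theta H^2(\mathbb D^2)$ via Lemma~\ref{Rudin}. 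Your sketch of that spanning step is in the right direction, but it is precisely this analytic-continuation argument that does the real work, and your writeup leaves it as a plan; it is not as routine as ``density of $\Omega$'' suggests, because one must first establish that $f/\theta_j$ is holomorphic on $\Omega_i\times\mathbb D$ (using Hartogs and the irreducibility of $\theta_j$, so that simple zeros are generic) before one can continue. Your necessity argument, by contrast, matches the paper's: both invoke Lemma~\ref{zeros} to build inner functions $g_i$ with zero set one component of $Z_{\theta_j}$, observe that no connected candidate domain can meet both components' $z$-projections, and exhibit a nonzero vector of $\mathcal K_\theta$ (your $P_{\mathcal K_\theta}g_1$, the paper's $P_{\mathcal K_\theta}(\theta g_i/\theta_j)$) orthogonal to all kernels over that domain, contradicting axiom (2).
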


\begin{proof}

Suppose that each  $\pi_1(Z_{\theta_j}\cap \mathbb{D}^2)$ is connected. Take $\Omega$ be an arbitrary $\Omega_i$, then $\Omega \times \mathbb{D}$ has nonempty intersection with  each $Z_{\theta_j}$.
To show $S_z^*$ is Cowen-Douglas on $\Omega^*$, it suffices to show that
\[ \bigvee_{\lambda\in \Omega} Ker S_{z-\lambda}^* =\mathcal{K}_{\theta}. \]
Suppose   $f\in \mathcal{K}_{\theta}$   is orthogonal to
 $$\bigvee\limits_{\lambda\in \Omega} Ker S_{z-\lambda}^*=\left\{\frac{g(w)}{1-\overline{\lambda}z} :~ g(w)\in H_w^2\ominus \theta(\lambda,w)H_w^2\right\},$$
then  for each $\theta_j$, $f/\theta_j$ is holomorphic on $\Omega\times \mathbb{D}$. In fact, as $\theta_j$ is irreducible, there are at most finitely many points  $(\lambda,w)$ in $\mathbb{D}^2$ such that $\theta_j(\lambda, w)=\frac{\partial \theta_j(\lambda,w)}{\partial \lambda}=0$ and hence  $\theta_j(\cdot, w)$ has only simple zeros except for finitely many $w$. Therefore except for finitely many $w$, $f(\cdot,w)/\theta_j(\cdot,w)$ is holomorphic in $\Omega$, and obviously, $f(\lambda,\cdot)/ \theta_j(\lambda,\cdot)$ is holomorphic in $\mathbb{D}$, so by Hartogs's Theorem, $f/\theta_j$ is holomorphic on $\Omega\times \mathbb{D}$. So for every $\theta_j$, $f=0$ on $Z_{\theta_j} \cap \Omega\times \mathbb{D}$. By the continuations of the values of an algebra function (cf.\cite{Bli}, Chapter \textrm{2}), that is,
\[ f(z,\rho_i(z))=0~~for~~all~~z\in \Omega~~and~~i=1,\cdots,N_j,\]
where $\{\rho_i(z)\}$  are the analytic branches of the algebra function defined by $\theta_j(z,w)=0$.
 Since $\pi_1(Z_{\theta_j}\cap \mathbb{D}^2)(\supset \Omega)$ is connected, by the uniqueness of analytic continuation, we have $f=0$ on $\pi_1(Z_{\theta_j} \cap  \mathbb{D}^2)$, and hence  $f/\theta_j$ is holomorphic on $\mathbb{D}^2$. Repeating the above discussion  on $f/\theta_j$, we get that  $f/\theta_j^{k_j}$ is holomorphic on $\mathbb{D}^2$. It is follows that   $f/\theta$ is holomorphic on $\mathbb{D}^2$.  Since $\theta$ is a good inner function, we get $f\in \theta H^2(\mathbb{D}^2)$, and this gives that    $f$ must be zero.  This completes the proof of the first part of the theorem.

 Now suppose  $\pi_1(Z_{\theta_j})$ is not connected in $\mathbb{D}$ for some irreducible factor $\theta_j$.  As the projection map is continuous, hence $Z_{\theta_j}$ has  disjoint analytic components in $\mathbb{D}^2$. This means that the numerator of $\theta_j$, say $q_j$,  has disjoint analytic components in $\mathbb{D}^2$, and if we  partition $\pi_1(Z_{\theta_j})$  into disjoint components $\Omega_i$, then for every component $\Omega_i$ , there must exist  finite analytic components (take as many as possible) of $q_j$  such that $\Omega_i$ is exactly the union of the projection of those analytic components.
Therefore by Lemma \ref{zeros}, for every component $\Omega_i$ , there exists an inner function $g_i$ such that  $\pi_1(Z_{g_i})=\Omega_i$ and $g_i$ has the same zeros as $\theta_j$ on $\Omega_i\times \mathbb{D}$ .  Note that $\theta g_i/\theta_j\notin \theta H^2(\mathbb{D}^2)$ and
\[ \frac{\theta g_i}{\theta_j}\perp  \bigvee_{\lambda \in \Omega_i}  \left\{\frac{1}{1-\overline{\lambda}z} \frac{1}{1-\overline{\eta}w} : \theta(\lambda, \eta)=0\right\}.  \]
 It follows that
\[
  \bigvee_{\lambda\in \Omega_i} Ker~S_{z-\lambda}^* =\bigvee_{\lambda \in \Omega_i}  \{\frac{1}{1-\overline{\lambda}z} \frac{1}{1-\overline{\eta}w} : \theta(\lambda, \eta)=0\}\neq \mathcal{K}_{\theta},\]
and therefore $S_z^*$ is not a Cowen-Douglas operator on $\Omega_i^*$.  This completes the proof.
\end{proof}

\section{The Generalized Cowen-Douglas Operator}
In this section, we     study the elementary   geometric  properties for     the generalized Cowen-Douglas operators, and prove that $S_z^*$ on $\mathcal{K}_{\theta}$ induced by a rational inner function is  a generalized Cowen-Douglas operator  and  $S_z$ is reducible if and only if $E_{S_z^*}$ is strictly   reducible.
Recall that for an operator $T$  in $\mathfrak{B}_{\alpha}(\Omega)$ and a connected component $\Omega_i$ of $\Omega$,   $(E_T(\Omega_i), \pi_i)$  is Hermitian holomorphic vector bundle over $\Omega_i$ of dimension $\alpha_i$ and
\[ E_T= \bigsqcup_{i=1}^N E_T(\Omega_i). \]
\begin{definition}
We say that $E_T$ is  reducible  if there exist nontrivial (not equals $\{0\}$ or $E_T$) $E^1$  and $E^2$ with
\[ E^1= \bigsqcup_{i=1}^N E^1(\Omega_i),~~~~E^2= \bigsqcup_{i=1}^N E^2(\Omega_i)\]
such that
\begin{enumerate}
  \item (Reducibility condition) $E^1(\Omega_i)\oplus E^2(\Omega_i)=E_T(\Omega_i)$ for all $i=1,\cdots,N$.
  \item (Orthogonality condition) $E^1(\Omega_i)\perp E^2(\Omega_j)$ for any $i,j=1,\cdots,N$, that is,
      \[ \bigvee_{w\in \Omega_i} E^1(w) \perp \bigvee_{w\in \Omega_j} E^2(w).\]
\end{enumerate}
In this case, we write as
\[E_T=E^1\oplus E^2.\]
\end{definition}
It is worth noting that $E^1(\Omega_i)$ may be zero   bundle for some $i$ since we only require $E^1$ is nontrivial. The following example provides a generalized Cowen-Douglas operator $T$ such that $E_T$ is reducible, however, each $E_T(\Omega_i)$ is irreducible as a Hermitian holomorphic  bundle.

\begin{example}\label{key-example}
Let $\mathcal{H}=H^2(\mathbb{D})\oplus H^2(\mathbb{D})$ and $M_z$ be the shift on $H^2(\mathbb{D})$. Let $T=M_z^*\oplus M_{z+2}^*$,  $\Omega_1=\mathbb{D}$, $\Omega_2=2+\mathbb{D} $,  $\Omega=\mathbb{D}\bigcup 2+\mathbb{D}=\bigcup\limits_{i=1}^2 \Omega_i$, then
\begin{equation*}
Ker(T-\bar{w} )=
\begin{cases}
 \mathbb{C}\left(
             \begin{array}{c}
               K_w \\
               0 \\
             \end{array}
           \right)
  &if\quad w\in  \Omega_1\\

   \mathbb{C}\left(
             \begin{array}{c}
               0 \\
               K_{w-2} \\
             \end{array}
           \right)
  &if\quad w\in  \Omega_2.
\end{cases}
\end{equation*}
It is clear that $T\in \mathfrak{B}_{\alpha}(\Omega^*)$ with $\alpha=(1,1)$.

Take
\begin{align*}
  E^1(\Omega_1)&=\left\{~\mathbb{C}\left(
             \begin{array}{c}
               K_w \\
               0 \\
             \end{array}
           \right) ~\right\} & E^1(\Omega_2)&=\{0\}, \\
  E^2(\Omega_1)&=\{0\} &E^2(\Omega_2)&=\left\{~\mathbb{C}\left(
             \begin{array}{c}
               0 \\
               K_{w-2} \\
             \end{array}
           \right)~ \right\}.
\end{align*}
It is clear that $E_T=E^1\bigoplus E^2$, and hence it is reducible.
\end{example}


In the following, we will give some elementary   geometric  properties for     the generalized Cowen-Douglas operators.
\vspace{0.5cm}

We say $w_0$ is  \textbf{a point of stability}  for $T$ if $T-w_0$ is Fredholm and $\dim Ker(T-w)$ is constant on some neighborhood of $w_0$. Obviously, for an operator $T$ in $\mathfrak{B}_{\alpha}(\Omega)$, any point $w$ in $\Omega$ is a point of stability for $T$.

\begin{lemma}[\cite{CoD}, Proposition 1.11] \label{basis}

If $w_0$ is a point of stability for $T$ in $\mathcal{L}(\mathcal{H})$, then there exist holomorphic $\mathcal{H}$-valued functions $\{e_i(w)\}_{i=1}^n$ defined on some neighborhood $\Delta$ of $w_0$ such that $\{e_1(w), e_2(w),\cdots,e_n(w)\}$ forms a basis of  $Ker(T-w)$ for $w$ in $\Delta$.
\end{lemma}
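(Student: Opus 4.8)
Write $A(w)=T-w$ and $A_0=A(w_0)$, and put $N=\ker A_0$, $n=\dim N<\infty$. The aim is to exhibit a holomorphic family of invertible operators $U(w)$ on $\mathcal{H}$, defined on a neighbourhood $\Delta$ of $w_0$, with $U(w_0)=I$ and $\ker A(w)=U(w)N$ for every $w\in\Delta$; then, fixing any basis $y_1,\dots,y_n$ of $N$, the vectors $e_i(w):=U(w)y_i$ are holomorphic in $w$ and form a basis of $\ker(T-w)$, which is exactly the assertion. I would construct $U(w)$ from a fixed generalized inverse of $A_0$ via a Neumann series. Since $A_0$ is Fredholm, $N$ is finite dimensional and $R:=\operatorname{Ran}A_0$ is closed of finite codimension, so one may fix bounded idempotents $P$ (with range $N$) and $Q$ (with range $R$) on $\mathcal{H}$, together with the bounded operator $\hat D:=DQ$, where $D\colon R\to\ker P$ is the inverse (bounded, by the open mapping theorem) of the bijection $A_0|_{\ker P}\colon\ker P\to R$. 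A direct computation then gives $\hat D A_0=I-P$ and $A_0\hat D=Q$.

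The core of the argument is a short parametrix computation. For $|w-w_0|$ small enough that the relevant Neumann series converges, set $U(w):=\bigl(I-(w-w_0)\hat D\bigr)^{-1}$; this is a holomorphic family of invertible operators with $U(w_0)=I$. From
\[
\hat D\,A(w)=\hat D A_0-(w-w_0)\hat D=(I-P)-(w-w_0)\hat D=U(w)^{-1}-P
\]
we get $U(w)\,\hat D\,A(w)=I-U(w)P$. Hence, whenever $A(w)h=0$, we have $h=U(w)Ph\in U(w)N$, so $\ker A(w)\subseteq U(w)N$ for all such $w$; and since $U(w)$ is invertible, $\dim U(w)N=n$.

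It remains to promote this inclusion to an equality, and this is the only place where the full strength of the hypothesis is used. Because $w_0$ is a point of stability, $\dim\ker A(w)=n$ on some neighbourhood $\Delta_0$ of $w_0$; taking $\Delta$ to be the intersection of $\Delta_0$ with the disk on which $U(w)$ is defined, the inclusion $\ker A(w)\subseteq U(w)N$ together with the equality $\dim\ker A(w)=n=\dim U(w)N$ forces $\ker A(w)=U(w)N$ on $\Delta$. This finishes the proof. The main subtlety to flag is that the orthogonal projection onto $\ker(T-w)$ is only real-analytic in $w$, so one cannot legitimately differentiate a spanning set of $\ker(T-w)$ directly; the device above sidesteps this by replacing $\ker(T-w)$ with the manifestly holomorphic family $U(w)N$, and the constancy of the kernel dimension is precisely what makes the two families coincide.
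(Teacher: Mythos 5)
Your proof is correct, and the technical details — the generalized inverse $\hat D$ satisfying $\hat D A_0=I-P$ and $A_0\hat D=Q$, the identity $U(w)\hat DA(w)=I-U(w)P$, the resulting inclusion $\ker A(w)\subseteq U(w)N$, and the final dimension count using the stability hypothesis — all check out. Note, however, that the paper does not actually prove this lemma; it simply cites it as Proposition~1.11 of Cowen--Douglas, so there is no in-paper argument to compare against.

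For what it is worth, the parametrix-plus-Neumann-series construction you use is the standard modern route to a holomorphic local frame for the kernel bundle of a Fredholm family, and it is in the same spirit as the original Cowen--Douglas argument (which also reduces the problem to inverting a holomorphically perturbed operator built from a complement of $\ker(T-w_0)$). You correctly isolate the one place where the hypothesis of \emph{stability} (local constancy of $\dim\ker(T-w)$) is genuinely needed: without it the inclusion $\ker A(w)\subseteq U(w)N$ can be strict, and the conclusion would fail. Your closing caveat about the orthogonal projection onto $\ker(T-w)$ being only real-analytic is also the right thing to flag, since it is exactly the na\"{\i}ve approach this lemma is designed to avoid.
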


By  Lemma \ref{basis}, we have the following proposition.
\begin{proposition}
For  an operator $T$ in $\mathfrak{B}_{\alpha}(\Omega)$ and a connected component $\Omega_i$ of $\Omega$, let $\Delta$ be a connected open subset of $\Omega_i$, then
\[ \bigvee_{w\in \Delta} Ker(T-w)=\bigvee_{w\in \Omega_i} Ker(T-w) .\]
Moreover, by the spanning property (cf.\cite{CoD},section 1.7), for any $w_0\in \Omega_i$, we have
\[ \bigvee_{k=1}^\infty Ker(T-w_0)^k  =\bigvee_{w\in \Omega_i} Ker(T-w).\]
\end{proposition}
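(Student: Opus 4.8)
The plan is to adapt the argument behind Corollary 1.13 of \cite{CoD} to the present component-wise situation. Fix the connected component $\Omega_i$ and put $n=\alpha_i$. Then for every $w\in\Omega_i$ the operator $T-w$ is surjective with $\dim Ker(T-w)=n<\infty$, hence Fredholm, so every point of $\Omega_i$ is a point of stability for $T$ and Lemma \ref{basis} supplies, near each point of $\Omega_i$, a holomorphic $\mathcal{H}$-valued frame of $Ker(T-w)$. Since $\Delta\subseteq\Omega_i$, the inclusion $\bigvee_{w\in\Delta}Ker(T-w)\subseteq\bigvee_{w\in\Omega_i}Ker(T-w)$ is automatic; writing $\mathcal{M}=\bigvee_{w\in\Delta}Ker(T-w)$, the task reduces to proving $Ker(T-w)\subseteq\mathcal{M}$ for every $w\in\Omega_i$.

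The key step is a connectedness argument powered by the vector-valued identity theorem. I would let $V$ be the set of those $w_0\in\Omega_i$ that admit a neighbourhood on which $Ker(T-w)\subseteq\mathcal{M}$ for all $w$. By definition $V$ is open, and $\Delta\subseteq V$, so $V$ is nonempty. To see that $V$ is closed in $\Omega_i$, take a limit point $w_0\in\Omega_i$ of $V$, choose a holomorphic frame $e_1,\dots,e_n$ of $Ker(T-w)$ over a disk $B\ni w_0$ (Lemma \ref{basis}), and let $Q$ be the orthogonal projection of $\mathcal{H}$ onto $\mathcal{M}^{\perp}$. Each map $w\mapsto Qe_j(w)$ is holomorphic on $B$ and vanishes on the nonempty open set $B\cap V$ (there $Ker(T-w)\subseteq\mathcal{M}$), hence vanishes identically on $B$ by the identity theorem; thus $Ker(T-w)\subseteq\mathcal{M}$ for all $w\in B$, so $B\subseteq V$ and $w_0\in V$. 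As $\Omega_i$ is connected and $V$ is a nonempty clopen subset, $V=\Omega_i$, which gives $\bigvee_{w\in\Omega_i}Ker(T-w)\subseteq\mathcal{M}$ and hence the first identity.

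For the ``moreover'' part I would fix $w_0\in\Omega_i$ and take $\Delta$ to be a disk centred at $w_0$ inside $\Omega_i$ carrying a holomorphic frame $e_1,\dots,e_n$ of $Ker(T-w)$. Differentiating $(T-w)e_j(w)=0$ repeatedly at $w_0$ gives $(T-w_0)e_j^{(k)}(w_0)=k\,e_j^{(k-1)}(w_0)$, whence $(T-w_0)^{k+1}e_j^{(k)}(w_0)=0$, i.e.\ every Taylor coefficient $e_j^{(k)}(w_0)$ lies in $Ker(T-w_0)^{k+1}$; expanding $e_j(w)$ in its norm-convergent Taylor series on $\Delta$ then shows $\bigvee_{w\in\Delta}Ker(T-w)\subseteq\bigvee_{k\geq1}Ker(T-w_0)^k$, while conversely the derivatives $e_j^{(k)}(w_0)$, being limits of difference quotients of the $e_j(w)$, all lie in $\bigvee_{w\in\Delta}Ker(T-w)$. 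The spanning property (cf.\ \cite{CoD}, section 1.7) — applicable here because $T-w$ is surjective with constant finite nullity near $w_0$ — states precisely that $\{e_j^{(k)}(w_0)\}$ spans $\bigvee_{k\geq1}Ker(T-w_0)^k$; hence $\bigvee_{k\geq1}Ker(T-w_0)^k=\bigvee_{w\in\Delta}Ker(T-w)$, and combining this with the first part yields
\[ \bigvee_{k=1}^\infty Ker(T-w_0)^k=\bigvee_{w\in\Omega_i}Ker(T-w). \]

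The routine ingredients are the Fredholmness bookkeeping on $\Omega_i$ and the elementary differentiation identities. The step requiring the most care is showing that the closed set $V$ is all of $\Omega_i$, that is, making the purely local identity-theorem argument propagate across $\Omega_i$: this is exactly where it matters that $\Delta$ is a nonempty \emph{open} set rather than a single point, and where connectedness of $\Omega_i$ enters. I do not anticipate any genuine obstacle beyond this.
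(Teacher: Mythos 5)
Your proof is correct and follows essentially the same route as the paper: both run a connectedness (clopen) argument on $\Omega_i$ using the local holomorphic frames from Lemma \ref{basis} and the identity theorem, the only cosmetic difference being that you package the orthogonality via the projection $Q$ onto $\mathcal{M}^\perp$ while the paper fixes a single vector $x\perp\bigvee_{w\in\Delta}Ker(T-w)$ and shows the interior of $\{w: x\perp Ker(T-w)\}$ is clopen. The ``moreover'' part is likewise handled as in the paper (by appeal to the spanning property), with your added differentiation details being a harmless elaboration.
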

\begin{proof}
Suppose $x$ in $\mathcal{H}$ is orthogonal to $Ker(T-w)$ for $w$ in $\Delta$. If $w_0$ is a boundary point of $\Lambda=$ interior of $\{ w\in \Omega : x\perp Ker(T-w) \}$ in $\Omega$ (obviously, $\Delta\subset \Lambda$, hence $\Lambda$ is nonempty) , then by Lemma \ref{basis}, there exist an open set $\Lambda_0$ of $\Omega$ about $w_0$ and holomorphic functions $\{e_1(w), e_2(w),\cdots,e_n(w)\}$ defined on $\Lambda_0$ which form a basis for $Ker(T-w)$ for each $w$ in $\Lambda_0$. Since the holomorphic functions $\langle e_i(w),x \rangle$ for $i=1,2,\cdots,n$ vanish on $\Lambda$, they  vanish identically and hence $\Lambda_0$ is contained in $\Lambda$. Thus $\Lambda=\Omega$ which completes the proof.
\end{proof}

There are many properties of Cowen-Douglas operators can be extend to the generalized Cowen-Douglas operators and the proof is similar to the original proof. For reader's convenience, we  give the original references and the similar proofs.

\begin{proposition}[\cite{CoD}, Proposition 1.21]\label{commutant-CD}
For $T$ being in the generalized Cowen-Douglas class $\mathfrak{B}_{\alpha}(\Omega)$, let $H^\infty_{\mathcal{L}(E_T)}(\Omega)$ be the collection of bounded endomorphisms on $E_T$.  Then there is a contractive monomorphism $\Gamma_T$ from the  commutant $\{T\}'$ of $T$ into $H^\infty_{\mathcal{L}(E_T)}(\Omega)$.
\end{proposition}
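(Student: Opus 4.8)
The plan is to mimic the classical construction of the map $\Gamma$ in \cite{CoD}, Proposition 1.21, adapting it componentwise to the disjoint-union structure of $E_T=\bigsqcup_{i=1}^N E_T(\Omega_i)$. First I would fix an operator $X$ in the commutant $\{T\}'$, i.e.\ $XT=TX$. For each connected component $\Omega_i$ and each $w\in\Omega_i$, the identity $XT=TX$ shows that $X$ maps $Ker(T-w)$ into itself: if $(T-w)x=0$ then $(T-w)Xx=X(T-w)x=0$. Hence $X$ restricts to a linear endomorphism of each fibre $E_T(\Omega_i)_w=Ker(T-w)$, and we define $(\Gamma_T X)(w)=X|_{Ker(T-w)}$. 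This simultaneously defines a fibrewise endomorphism of every subbundle $E_T(\Omega_i)$, and therefore an endomorphism of $E_T$.

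Second, I would check that $\Gamma_T X$ is a \emph{bounded holomorphic} endomorphism. Boundedness is immediate since $\|(\Gamma_T X)(w)\|\le\|X\|$ for every $w$, so $\Gamma_T X\in H^\infty_{\mathcal{L}(E_T)}(\Omega)$ with $\|\Gamma_T X\|_\infty\le\|X\|$; this already gives the contractivity claim. For holomorphy, I would work locally on a connected open subset $\Delta\subseteq\Omega_i$: by Lemma \ref{basis} there is a holomorphic frame $\{e_1(w),\dots,e_{\alpha_i}(w)\}$ for $Ker(T-w)$ over $\Delta$, and one writes $X e_j(w)=\sum_k a_{kj}(w)e_k(w)$. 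One shows the scalar coefficients $a_{kj}(w)$ are holomorphic in the standard way — e.g.\ by applying a biorthogonal system of holomorphic functionals to both sides, or by observing that $w\mapsto Xe_j(w)$ is a holomorphic $\mathcal{H}$-valued function lying in the holomorphic subbundle and expanding it in the frame — so the matrix $[a_{kj}(w)]$ of $(\Gamma_T X)(w)$ is holomorphic; patching over components gives holomorphy of $\Gamma_T X$ on all of $\Omega$.

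Third, $\Gamma_T$ is clearly linear, and it is an algebra homomorphism because restriction of a product is the product of restrictions on the common invariant subspaces $Ker(T-w)$. The one remaining point is \emph{injectivity}: if $\Gamma_T X=0$ then $X$ annihilates $Ker(T-w)$ for every $w\in\Omega$, hence annihilates $\bigvee_{w\in\Omega}Ker(T-w)=\mathcal{H}$ by condition (2) in Definition \ref{general-definition}, so $X=0$. This is where the full spanning hypothesis (2) is essential, and it is the only place the disconnectedness of $\Omega$ matters — the span must be taken over the union of \emph{all} components. Assembling these four pieces — fibrewise well-definedness, contractive boundedness, local holomorphy, and injectivity — yields the contractive monomorphism $\Gamma_T\colon\{T\}'\to H^\infty_{\mathcal{L}(E_T)}(\Omega)$.

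I do not anticipate a serious obstacle here: the argument is a routine transcription of the connected case, and the disjoint-union bookkeeping is harmless because the fibrewise invariance and the holomorphy are local statements on each $\Omega_i$, while the only global statement (injectivity) follows verbatim from the spanning axiom. If anything needs care, it is making precise the holomorphic-frame argument for holomorphy of $\Gamma_T X$ across the various $\Omega_i$ of possibly different ranks $\alpha_i$, but this is handled component by component and presents no new difficulty.
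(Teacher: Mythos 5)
Your proposal is correct and follows essentially the same route as the paper's proof: define $(\Gamma_T X)(w)=X|_{Ker(T-w)}$ using fibrewise invariance, obtain contractivity from $\|X|_{Ker(T-w)}\|\le\|X\|$, get holomorphy from the local holomorphic frames, and deduce injectivity from the spanning condition $\bigvee_{w\in\Omega}Ker(T-w)=\mathcal{H}$. Your treatment of the holomorphy step is slightly more detailed than the paper's one-line remark, but there is no substantive difference.
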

\begin{proof}
If $XT=TX$, then $X Ker (T-w) \subseteq Ker (T-w)$ for $w$ in $\Omega$. Moreover, if $e(w)$ is a local holomorphic cross-section of $E_T$, then so is $X e(w)$. Therefore $X$ defines a holomorphic bundle map $\Gamma_T X$ on $E_T$ by $(\Gamma_T X)(w)=X|_{Ker(T-w)}$. Since
\[\|(\Gamma_T X)(w)\|=\|X|_{Ker(T-w)}\|\leq \|X\| , \]
we see that $\Gamma_T X$ lies in $H^\infty_{\mathcal{L}(E_T)}(\Omega)$ and $\Gamma_T$ is contractive. That $\Gamma_T$ is a homomorphism is clear and it is one-to-one since $\bigvee_{w\in \Omega} Ker(T-w)=\mathcal{H}$.
\end{proof}

In general, $\Gamma_T$ is not onto. For example,  when $T$ is the Dirichlet operator (cf. \cite{Taylor}), $\Gamma_T$ is not onto. Further, similar to the proof of Corollary 3.7 in \cite{ChD}, we can show that for a generalized Cowen-Douglas operator $T$, if $\Omega$ has only finitely many connected components, then its von Neumann algebra $V^*(T)$ is finite dimensional.
\begin{proposition}[\cite{ChD}, Corollary 3.7]
For a generalized Cowen-Douglas operator $T$ over $\Omega$, assume $\Omega$ has only finitely many connected components, then its von Neumann algebra $V^*(T)$ is finite dimensional.
\end{proposition}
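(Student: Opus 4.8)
The plan is to mimic the argument of Corollary 3.7 in \cite{ChD} for ordinary Cowen-Douglas operators, reducing the finite-dimensionality of $V^*(T)$ to the finite-dimensionality of the commutant of a single self-adjoint matrix algebra, componentwise. First I would recall from Proposition \ref{commutant-CD} that $\Gamma_T$ embeds $\{T\}'$ contractively into $H^\infty_{\mathcal{L}(E_T)}(\Omega)$, and hence embeds $V^*(T)=\{T\}'\cap\{T^*\}'$ into the self-adjoint subalgebra of bounded endomorphisms of $E_T$ that are compatible with the Hermitian metric on each component bundle $E_T(\Omega_i)$. Because $\Omega=\bigcup_{i=1}^N\Omega_i$ with $N<\infty$, it suffices to bound $\dim V^*(T)$ by a finite sum of contributions, one for each component and each pair of components.

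Next I would work on a fixed connected component $\Omega_i$. Here $E_T(\Omega_i)$ is a genuine Hermitian holomorphic vector bundle of finite rank $\alpha_i$ with its canonical (Chern) connection $D^{(i)}$, so Theorem \ref{block-diag} and Theorem \ref{von-alg} apply verbatim: off the coalescing set, near a point of stability $z_0^{(i)}\in\Omega_i$ the connection matrix block-diagonalizes as $\Theta_1^{(i)}\otimes I_{m_1^{(i)}}\oplus\cdots\oplus\Theta_{r_i}^{(i)}\otimes I_{m_{r_i}^{(i)}}$, and the algebra $\mathcal{A}(s)(z)$ generated by the curvature and its covariant derivatives equals a fixed self-adjoint matrix algebra $M(\mathbf{n}^{(i)},\otimes\mathbf{m}^{(i)})\subseteq M_{\alpha_i}(\mathbb{C})$. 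The commutant of that algebra inside $M_{\alpha_i}(\mathbb{C})$ is finite-dimensional (it is a direct sum of full matrix algebras $M_{m_k^{(i)}}(\mathbb{C})$), so the part of $V^*(T)$ that acts ``within $E_T(\Omega_i)$'' — i.e. the image of $\{ X\in V^*(T): X\bigvee_{w\in\Omega_j}Ker(T-w)=0 \text{ for } j\neq i\}$ under $\Gamma_T$ — injects into this finite-dimensional commutant. The spanning property on $\Omega_i$ (used already in the proof of Proposition \ref{commutant-CD}) guarantees that such an $X$ is determined by its restriction to a single fibre, so this injection is genuine.

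Then I would handle the ``cross'' contributions. A general $X\in V^*(T)$ decomposes, via the reducibility framework of Section 4, according to which blocks $\bigvee_{w\in\Omega_i}Ker(T-w)$ it intertwines; since any $X\in\{T\}'$ satisfies $X\,Ker(T-w)\subseteq Ker(T-w)$ pointwise, $X$ cannot send a fibre over $\Omega_i$ to a fibre over $\Omega_j$ when $\Omega_i\cap\Omega_j=\emptyset$ (the eigenvalues differ), so in fact $X$ respects the orthogonal-type decomposition $\mathcal{H}=\bigoplus_i \bigl(\bigvee_{w\in\Omega_i}Ker(T-w)\bigr)^{\!-}$ only in the sense that each summand is $X$-coinvariant for the eigenvalue reason, but these summands need not be mutually orthogonal. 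The cleanest route is: observe that $V^*(T)\hookrightarrow \bigoplus_{i} V^*(T|_{\mathcal{H}_i})$ where $\mathcal{H}_i=\bigvee_{w\in\Omega_i}Ker(T-w)$ and $T|_{\mathcal{H}_i}$ is a genuine Cowen-Douglas operator in $\mathfrak{B}_{\alpha_i}(\Omega_i)$ — this uses that $X\in V^*(T)$ preserves each $Ker(T-w)^k$ hence each $\mathcal{H}_i$, and $X^*$ does too since $X^*\in V^*(T)$ — and then apply Corollary 3.7 of \cite{ChD} to each $T|_{\mathcal{H}_i}$ to conclude each $V^*(T|_{\mathcal{H}_i})$ is finite-dimensional, so the finite direct sum is as well.

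The main obstacle is the last reduction: justifying that $X\in V^*(T)$ restricts to an operator in $V^*(T|_{\mathcal{H}_i})$, because the $\mathcal{H}_i$ need not be mutually orthogonal (they are not reducing subspaces in general — only the ``strict reducibility'' of Section 4 would force orthogonality). One must verify that $\mathcal{H}_i$ is invariant under both $X$ and $X^*$: invariance under $X$ follows from $X$ preserving every $Ker(T-w)$, $w\in\Omega_i$, and invariance under $X^*$ follows by applying the same observation to $X^*\in V^*(T)$; thus $P_{\mathcal{H}_i}X|_{\mathcal{H}_i}=X|_{\mathcal{H}_i}$ genuinely restricts, commutes with $T|_{\mathcal{H}_i}$, and its adjoint in $\mathcal{L}(\mathcal{H}_i)$ is $X^*|_{\mathcal{H}_i}$, so it lies in $V^*(T|_{\mathcal{H}_i})$. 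The map $X\mapsto (X|_{\mathcal{H}_i})_{i=1}^N$ is then injective because $\bigvee_i\mathcal{H}_i=\bigvee_{w\in\Omega}Ker(T-w)=\mathcal{H}$. Once this is in place, the finiteness of $V^*(T)$ is immediate from $N<\infty$ and the known Cowen-Douglas case applied componentwise.
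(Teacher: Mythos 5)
Your proposal is essentially correct, but it takes a genuinely different route from the paper. You reduce to the classical Cowen--Douglas case componentwise: you restrict each $X\in V^*(T)$ to $\mathcal{H}_i=\bigvee_{w\in\Omega_i}Ker(T-w)$, check that $\mathcal{H}_i$ reduces $X$ (so that $(X|_{\mathcal{H}_i})^{*}=X^{*}|_{\mathcal{H}_i}$ and $X|_{\mathcal{H}_i}\in V^*(T|_{\mathcal{H}_i})$), and invoke Corollary 3.7 of \cite{ChD} on each piece, with injectivity of $X\mapsto(X|_{\mathcal{H}_i})_{i=1}^N$ coming from $\bigvee_i\mathcal{H}_i=\mathcal{H}$. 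The paper instead re-runs the \emph{proof} of that corollary directly in the generalized setting: it fixes one point $w_i$ in each component, and shows that any $S\in V^*(T)$ vanishing on the finite-dimensional space $\bigvee_{i=1}^N Ker(T-w_i)$ satisfies $S\mathcal{H}\subseteq\bigcap_{k}Ran[(T-w_i)^*]^k=\bigl(\bigvee_k Ker(T-w_i)^k\bigr)^{\perp}$ for every $i$, whence $S\mathcal{H}\perp\mathcal{H}$ by the spanning property and $S=0$. The paper's argument is shorter and self-contained, needs only one point per component, and in particular never has to make sense of $T|_{\mathcal{H}_i}$ as an operator in its own right; your argument buys a cleaner conceptual reduction to the known one-component theorem. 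Your first two paragraphs (block diagonalization of the connection on each $E_T(\Omega_i)$) are not actually used once you switch to the ``cleanest route,'' and could be dropped.

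The one step you assert but do not justify is that $T|_{\mathcal{H}_i}$ is a genuine Cowen--Douglas operator in $\mathfrak{B}_{\alpha_i}(\Omega_i)$; the kernel and spanning conditions are immediate, but surjectivity of $T|_{\mathcal{H}_i}-w$ on $\mathcal{H}_i$ is not. It does hold: $(T-w)$ is bounded below on $Ker(T-w)^{\perp}$, so $(T-w)\mathcal{H}_i=(T-w)(\mathcal{H}_i\cap Ker(T-w)^{\perp})$ is closed, and it contains $Ker(T-w')$ for every $w'\in\Omega_i\setminus\{w\}$, hence contains $\bigvee_{w'\in\Omega_i\setminus\{w\}}Ker(T-w')=\mathcal{H}_i$ by the spanning property on the connected set $\Omega_i\setminus\{w\}$. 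Without some such verification your appeal to Corollary 3.7 of \cite{ChD} is not licensed, since that corollary (and in particular the decomposition $\mathcal{H}_i=Ker(T|_{\mathcal{H}_i}-w)\oplus Ran(T|_{\mathcal{H}_i}-w)^{*}$ used in its proof) requires the range condition. With this supplied, your argument is complete.
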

\begin{proof}
Assume
\[\Omega=\bigcup_{i=1}^N \Omega_i,~~~N<\infty.\]
For $w_i\in \Omega_i$, since $Ker(T-w_i)$ is finite dimensional, it   suffices  to show that for a operator $S\in V^*(T)$, $S$ is completely determined by its action on $\bigvee_{i=1}^N Ker(T-w_i)$. That is, if $S\in V^*(T)$ and $S|_{\bigvee_{i=1}^N Ker(T-w_i)}=0$, then $S=0$.

For any $i$, since   $Ran(T-w_i)^*$ is closed and $\mathcal{H}=Ker(T-w_i)\oplus Ran (T-w_i)^*$, we have
\begin{align*}
   S\mathcal{H} &=S(T-w_i)^* \mathcal{H}=(T-w_i)^* S\mathcal{H}=(T-w_i)^* S(T-w_i)^*\mathcal{H}  \\
   & =[(T-w_i)^*]^2 S\mathcal{H}=\cdots \subset \bigcap_{k=1}^\infty Ran[(T-w_i)^*]^k.
\end{align*}
Hence
\[ S\mathcal{H}\subset \bigcap_{i=1}^N \bigcap_{k=1}^\infty Ran[(T-w_i)^*]^k=\left(\bigvee_{i=1}^N \bigvee_{i=1}^\infty Ker(T-w_i)^k \right)^\perp.\]
By the spanning property and $\bigvee_{i=1}^N \bigvee_{w\in \Omega_i} Ker(T-w)=\mathcal{H}$, we have $S\mathcal{H}=0$, and this completes the proof.
\end{proof}


\begin{theorem}[\cite{CoD}, Propositon 1.18]\label{reduce}
An operator $T$ in $\mathfrak{B}_{\alpha}(\Omega)$ is reducible if and only if $E_T$ is reducible.
\end{theorem}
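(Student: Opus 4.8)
The plan is to follow the template of the connected case (Lemma \ref{reducibility}, i.e. \cite{CoD}, Proposition 1.18), proving the two implications separately; the only genuinely new bookkeeping is that the components $\Omega_1,\dots,\Omega_N$ carry fibres of possibly different dimensions $\alpha_i$, and that the definition of reducibility for $E_T$ additionally demands orthogonality \emph{across} components. I would first dispatch the easy direction. Assume $E_T=E^1\oplus E^2$ in the sense of the preceding definition, and set $\mathcal{H}_1=\bigvee_{w\in\Omega}E^1(w)$, $\mathcal{H}_2=\bigvee_{w\in\Omega}E^2(w)$. The orthogonality condition forces $\mathcal{H}_1\perp\mathcal{H}_2$; the reducibility condition gives $E^1(w)\oplus E^2(w)=Ker(T-w)$ for each $w\in\Omega$, so combined with the spanning property (condition (2) of Definition \ref{general-definition}) one gets $\mathcal{H}=\mathcal{H}_1\oplus\mathcal{H}_2$. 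Since every $x\in E^1(w)$ obeys $Tx=wx\in E^1(w)\subseteq\mathcal{H}_1$, the operator $T$ carries the total subset $\bigcup_{w}E^1(w)$ of $\mathcal{H}_1$ into $\mathcal{H}_1$, hence $T\mathcal{H}_1\subseteq\mathcal{H}_1$ by continuity; similarly $T\mathcal{H}_2\subseteq\mathcal{H}_2=\mathcal{H}_1^\perp$, which is equivalent to $T^*\mathcal{H}_1\subseteq\mathcal{H}_1$. Thus $\mathcal{H}_1$ reduces $T$, and nontriviality of $E^1,E^2$ shows $\mathcal{H}_1,\mathcal{H}_2\neq 0$, so the reduction is proper.

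For the converse, suppose $\mathcal{H}_1$ reduces $T$ with $\mathcal{H}=\mathcal{H}_1\oplus\mathcal{H}_2$, and let $P$ be the orthogonal projection onto $\mathcal{H}_1$, so $P\in\{T\}'$. By Proposition \ref{commutant-CD}, $\Gamma_T P$ is a bounded holomorphic endomorphism of $E_T$, acting fibrewise as $P|_{Ker(T-w)}$; since $P^2=P$ it is an idempotent bundle map, $(\Gamma_T P)^2=\Gamma_T P$. The key point I would verify next is that, in a local holomorphic frame of $E_T(\Omega_i)$ supplied by Lemma \ref{basis}, $\Gamma_T P$ is represented by a holomorphic idempotent matrix function, which therefore has constant rank on the connected set $\Omega_i$ (its rank and that of $I-\Gamma_T P$ are both lower semicontinuous and sum to the constant $\dim Ker(T-w)$, hence both are locally constant); consequently $E^1(\Omega_i):=\mathrm{Ran}(\Gamma_T P)|_{\Omega_i}$ and $E^2(\Omega_i):=\ker(\Gamma_T P)|_{\Omega_i}=\mathrm{Ran}(\Gamma_T(I-P))|_{\Omega_i}$ are holomorphic subbundles of $E_T(\Omega_i)$ with $E^1(\Omega_i)\oplus E^2(\Omega_i)=E_T(\Omega_i)$. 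Because $P=P^*$ in $\mathcal{L}(\mathcal{H})$, the restriction $P|_{Ker(T-w)}$ is self-adjoint on each fibre, so this splitting is orthogonal; moreover $E^1(w)\subseteq\mathcal{H}_1$ and $E^2(w)\subseteq\mathcal{H}_2=\mathcal{H}_1^\perp$ for every $w$, which is precisely the orthogonality condition $E^1(\Omega_i)\perp E^2(\Omega_j)$ for all $i,j$. Finally, since $P$ sends a dense subspace of $\mathcal{H}$ onto a dense subspace of $\mathcal{H}_1$, the spanning property gives $\bigvee_{w\in\Omega}E^1(w)=\mathcal{H}_1$ and $\bigvee_{w\in\Omega}E^2(w)=\mathcal{H}_2$, so both $E^1,E^2$ are nontrivial because the reduction is proper; hence $E_T=E^1\oplus E^2$ is reducible.

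I expect the main obstacle to be exactly this subbundle construction in the converse direction: turning the commuting projection $P$ into a holomorphically varying pair of subbundles requires knowing that the idempotent $\Gamma_T P$ has locally constant rank on each $\Omega_i$, and then one must check that self-adjointness of $P$ simultaneously (i) upgrades the fibrewise algebraic direct sum to an orthogonal one and (ii) yields the cross-component orthogonality $E^1(\Omega_i)\perp E^2(\Omega_j)$ for $i\neq j$, a feature absent from the classical connected statement. The remainder — the spanning-property manipulations, the continuity arguments, and the density of $P$ applied to a total set — is routine and parallels \cite{CoD}.
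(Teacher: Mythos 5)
Your proof is correct, but it routes around the paper's proof in both directions, and it is worth understanding why.

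For the implication ``$T$ reducible $\Rightarrow E_T$ reducible'' the paper does not go through $\Gamma_T P$ at all: it simply takes $x\in\operatorname{Ker}(T-w)$, writes $x=x_1\oplus x_2$ with $x_1\in\mathcal{M}$, $x_2\in\mathcal{N}$, and observes that $Tx_1\oplus Tx_2=wx_1\oplus wx_2$ forces both $x_1,x_2\in\operatorname{Ker}(T-w)$, so $\operatorname{Ker}(T-w)=\bigl(\operatorname{Ker}(T-w)\cap\mathcal{M}\bigr)\oplus\bigl(\operatorname{Ker}(T-w)\cap\mathcal{N}\bigr)$ and $T|_{\mathcal{M}},T|_{\mathcal{N}}$ are again generalized Cowen--Douglas operators. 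Your version encodes the same decomposition through the idempotent bundle map $\Gamma_T P$, and what it buys you is exactly the step the paper is silent about: the fact that $\dim\bigl(\operatorname{Ker}(T-w)\cap\mathcal{M}\bigr)$ is \emph{locally constant} on each $\Omega_i$, which you get from the trace (or lower-semicontinuity) argument for a holomorphic idempotent. So your converse direction is more explicit, but not shorter.

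For the implication ``$E_T$ reducible $\Rightarrow T$ reducible'' the two arguments genuinely diverge. You read the orthogonality condition in Definition \ref{Def4.9}'s precursor at face value --- since it asserts $\bigvee_{w\in\Omega_i}E^1(w)\perp\bigvee_{w\in\Omega_j}E^2(w)$ for \emph{all} $i,j$ including $i=j$, the orthogonality $\mathcal{H}_1\perp\mathcal{H}_2$ is immediate, and with the spanning property and the invariance $T(E^1(w))\subseteq E^1(w)$ the reduction falls out in two lines. The paper instead assumes only fibrewise orthogonality $\langle e_s(w),e_t(w)\rangle=0$ inside each $\Omega_i$ and \emph{derives} the global orthogonality of the local spans by differentiating the inner product and summing the Taylor series, as in \cite{CoD}. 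The paper's computation is thus not redundant in spirit: it shows that the $i=j$ part of the orthogonality condition is automatic once one has an orthogonal holomorphic subbundle decomposition of each $E_T(\Omega_i)$, and that the only genuinely extra hypothesis is the cross-component orthogonality $i\neq j$. Your shortcut is correct under the literal reading of the definition, but it does not reproduce this observation, which is the real geometric content inherited from the classical connected case. You might remark on this if you want your writeup to match the paper's intent rather than just its letter.

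Both implications are proved; no gap. The differences are (i) in the easy direction you lean on the stated orthogonality condition where the paper re-derives the within-component half of it, and (ii) in the converse direction you replace the paper's elementary splitting of $\operatorname{Ker}(T-w)$ with the $\Gamma_T$ / holomorphic idempotent mechanism, which has the small advantage of making the rank-constancy explicit.
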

\begin{proof}
Suppose $\mathcal{H}=\mathcal{M}\oplus \mathcal{N}$, where $\mathcal{M}$ and $\mathcal{N}$ reduce $T$. Let  $x$ be in $Ker (T-w)$,   write $x=x_1\oplus x_2$ for $x_1\in \mathcal{M}$ and $x_2$ in $\mathcal{N}$.  Then $$Tx_1\oplus Tx_2=wx=wx_1\oplus wx_2,$$
and this implies  that both $x_1$ and $x_2$ are in $Ker(T-w)$. It follows that $Ker(T-w)$ decomposes into
\[\left\{Ker(T-w)\cap \mathcal{M}\right\} \oplus \left\{Ker(T-w)\cap \mathcal{N}\right\}, \]
and hence $T_1=T|_{\mathcal{M}}$ is in $\mathfrak{B}_{\beta}(\Omega)$ and $T_2=T|_{\mathcal{N}}$ is in $\mathfrak{B}_{\gamma}(\Omega)$, where $\alpha=\beta+\gamma$, i.e., $\alpha_i=\beta_i+\gamma_i$ for  $i=1,2$. Therefore $E_T=E_{T_1}\oplus E_{T_2}$ is reducible.

Now suppose $E_T=E^1\oplus E^2$ is a reduction of $E_T$. For every $i$, fixed $w_i$ in $\Omega_i$ and let $\Delta_i$ be a neighborhood of $w_i$ in $\Omega_i$ on which there is a trivilization $e_1(w),\cdots,e_{\alpha_i}(w)$ of $E_T(\Omega_i)$, where $e_1(w),\cdots,e_{\beta_i}(w)$ span $E^1(\Omega_i)$ and $e_{\beta_i+1}(w),\cdots, e_{\alpha_i}(w)$ span $E^2(\Omega_i)$. To show that $T$ is reducible, it is enough to show that for every $i$, $ \bigvee_{w\in \Delta_i} \{~e_1(w),\cdots,e_{\beta_i}(w)~\}  $  and $  \bigvee_{w\in \Delta_i} \{~e_{\beta_i+1}(w),\cdots, e_{\alpha_i}(w)~\}  $ are orthogonal.  As $\langle e_s(w), e_t(w) \rangle=0$ for $1\leq s\leq \beta_i<t\leq \alpha_i$ and differentiating with respect to $w$ (viewing the functions of the variables $w$ and $\bar{w}$) we obtain
\[ 0=\frac{\partial}{\partial w} \langle e_s(w), e_t(w) \rangle=\langle e_s'(w), e_t(w) \rangle .  \]
Similarly, we have $\langle e_s^{(k)}(w), e_t(w) \rangle=0$ for $k=0,1,2,\cdots$ and therefore
\[ \langle~ e_s(w), e_t(w_i) ~\rangle=\langle~ \sum_{k=0}^\infty \frac{e_s^{(k)}(w_i)}{k!}(w-w_i)^k, e_t(w_i) ~\rangle =0 \]
for $|w-w_i|$ sufficiently small. Recall   that $\bigvee_{w\in \Delta_i} Ker (T-w)=\bigvee_{w\in \Omega_i} Ker (T-w)$, thus we proved that $e_t(w_i)$ is orthogonal to $ \bigvee_{w\in \Delta_i} \{~e_1(w),\cdots,e_{\beta_i}(w)~\}  $. Since $w_i$ is arbitrary and again by the fact $\bigvee_{w\in \Delta_i} Ker (T-w)=\bigvee_{w\in \Omega_i} Ker (T-w)$,   we finish the proof.
\end{proof}


The following theorem is main theorem of this section, which shows that $S_z^*$ is a generalized Cowen-Douglas operator.
\begin{theorem}\label{4.7}
Suppose $\theta(z,w)$ is a rational inner function  without   inner factors depending only on variable $z$ and   $\mathcal{K}_{\theta}=H^2(\mathbb{D}^2)\ominus \theta H^2(\mathbb{D}^2)$ is the Beurling type quotient module.  Let
 $$\Omega=\sigma(S_z)\setminus \sigma_e(S_z)=\bigcup_i \Omega_i,$$
 where $\Omega_i, i=1,\ldots, N$ are the connected components of $\Omega$,
 then $S_z^*$ belongs to $\mathfrak{B}_{\alpha}(\Omega^*)$, where $\alpha_i=Z_\theta|_{\Omega_i}$ is the numbers of zeros of $\theta(\lambda, \cdot)$ in $\mathbb{D}$, $\lambda\in \Omega_i$. 
\end{theorem}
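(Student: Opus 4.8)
The plan is to verify the three defining conditions of Definition \ref{general-definition} for the operator $T=S_z^*$ on the Hilbert space $\mathcal{H}=\mathcal{K}_\theta$ with $\Omega^*=(\sigma(S_z)\setminus\sigma_e(S_z))^*$, reading off the index from the spectral computations already established. First I would recall from Theorem \ref{spec2} that for $\lambda\in\Omega=\sigma(S_z)\setminus\sigma_e(S_z)$ the operator $S_{z-\lambda}$ is Fredholm, with $\operatorname{Ker}S_{z-\lambda}=\{0\}$ (since $\theta$ has no univariate inner factor in $z$, so $B(z)\equiv 1$ in the notation of Theorem \ref{kernel}) and $\operatorname{ind}(S_{z-\lambda}^*)=Z_\theta(\lambda)$. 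Fredholmness together with $\operatorname{Ker}S_{z-\lambda}=\{0\}$ forces $\operatorname{Ran}S_{z-\lambda}$ to be closed; combined with $\lambda\notin\sigma_e(S_z)$ but $\lambda\in\sigma(S_z)$, the operator $S_{z-\lambda}$ is bounded below but not surjective, hence $S_{z-\lambda}^*$ is surjective, i.e. $\operatorname{Ran}(S_z^*-\bar\lambda)=\mathcal{K}_\theta$ for $\bar\lambda\in\Omega^*$. This gives condition (1).

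Next, condition (3): by Theorem \ref{kernel}(3), $\operatorname{Ker}(S_z^*-\bar\lambda)=\operatorname{Ker}S_{z-\lambda}^*=\{\,g(w)/(1-\bar\lambda z):g\in H_w^2\ominus\theta(\lambda,w)H_w^2\,\}$, so its dimension equals $\dim\bigl(H_w^2\ominus\theta(\lambda,w)H_w^2\bigr)$. Since $\theta$ is rational and $\lambda\notin\sigma_e(S_z)$, the one-variable function $\theta(\lambda,\cdot)$ has no zeros on $\mathbb{T}$ (as shown in the proof of Theorem \ref{spec2}), hence its inner part is a finite Blaschke product of degree $Z_\theta(\lambda)$, and $\dim\operatorname{Ker}(S_z^*-\bar\lambda)=Z_\theta(\lambda)$. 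By Lemma \ref{lemma3.9}, the Fredholm index $\lambda\mapsto Z_\theta(\lambda)$ is locally constant on $\Omega$, hence constant on each connected component $\Omega_i$; call this value $\alpha_i$. Thus $\dim\operatorname{Ker}(S_z^*-\bar w)=\alpha_i$ for all $\bar w\in\Omega_i^*$, which is condition (3), and also identifies the index $\alpha=\{\alpha_i\}$ as claimed.

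The remaining point, and the one I expect to require the most care, is the spanning condition (2): $\bigvee_{\lambda\in\Omega}\operatorname{Ker}(S_z^*-\bar\lambda)=\mathcal{K}_\theta$. Using the explicit kernel vectors $K_\lambda(z)\,K_{a_k(\lambda)}^{(j)}(w)$ from Remark \ref{remark2.8}, equivalently the reproducing-kernel description $\bigvee_{\lambda\in\Omega}\{(1-\bar\lambda z)^{-1}(1-\bar\eta w)^{-1}:\theta(\lambda,\eta)=0\}$, I would show that any $f\in\mathcal{K}_\theta$ orthogonal to this span satisfies $f(\lambda,\eta)=0$ whenever $\theta(\lambda,\eta)=0$ and $\lambda\in\Omega$. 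Writing $\theta=\prod_j\theta_j^{k_j}$ with $\theta_j=\widetilde{p_j}/p_j$ irreducible, the argument of Theorem \ref{3.10} (Hartogs's theorem plus analytic continuation of the branches $\rho_i(z)$ of the algebraic function $\theta_j(z,w)=0$) shows $f/\theta_j$ extends holomorphically across $Z_{\theta_j}\cap(\Omega_i\times\mathbb{D})$ for each component $\Omega_i$; here the crucial input is Lemma \ref{existence}, which guarantees each $\Omega_i\times\mathbb{D}$ meets every algebraic component of $Z_{\theta_j}$, so that the vanishing propagates over all of $\pi_1(Z_{\theta_j}\cap\mathbb{D}^2)$ by connectedness of the latter along each branch — the subtlety is that $\Omega$ itself need not be connected, but each branch $\rho_i$ of $\theta_j=0$ is single-valued and analytic over a connected set, so orthogonality on one $\Omega_i$ forces $f=0$ on all of $Z_{\theta_j}$. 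Iterating up to multiplicity $k_j$ and over all $j$ yields that $f/\theta$ is holomorphic on $\mathbb{D}^2$; since $\theta$ is a good inner function, Lemma \ref{Rudin} gives $f\in\theta H^2(\mathbb{D}^2)$, whence $f=0$. This establishes (2) and completes the proof that $S_z^*\in\mathfrak{B}_\alpha(\Omega^*)$ with $\alpha_i=Z_\theta|_{\Omega_i}$.
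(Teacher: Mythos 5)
Your treatment of conditions (1) and (3) of Definition \ref{general-definition} is correct and matches the paper: surjectivity of $S_{z-\lambda}^*$ follows from Fredholmness plus $\operatorname{Ker}S_{z-\lambda}=\{0\}$ (via Theorem \ref{kernel}(2) with $B\equiv 1$), and the index identification $\dim\operatorname{Ker}S_{z-\lambda}^*=Z_\theta(\lambda)$, constant on each component, follows from Theorem \ref{kernel}(3) and Theorem \ref{spec2}. The paper's own proof of condition (2) is equally terse (it says ``by a similar argument with the proof of Theorem \ref{3.10}''), so here you are filling in details the paper leaves implicit — but one of your filled-in steps is wrong.

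Specifically, your claim ``orthogonality on one $\Omega_i$ forces $f=0$ on all of $Z_{\theta_j}$'' is false, and in fact directly contradicts Example \ref{counterexample}: there $\theta=g$ is a single irreducible factor, $Z_g\cap\mathbb{D}^2=E_1\sqcup E_2$, and $P_{\mathcal N}g_1$ is a nonzero element of $\mathcal{K}_g$ orthogonal to $\bigvee_{\lambda\in\pi_1(E_1)}\operatorname{Ker}S_{z-\lambda}^*$, so orthogonality on the component(s) of $\Omega$ lying inside $\pi_1(E_1)$ does not force $f=0$ on $E_2$. The issue is that although $Z_{\theta_j}$ is an irreducible algebraic variety in $\mathbb{C}^2$ (hence connected there), its intersection with $\mathbb{D}^2$ may be disconnected, and the branches $\rho_i$ are single-valued only locally; analytic continuation does not cross between disconnected pieces of $Z_{\theta_j}\cap\mathbb{D}^2$ because $f$ is only defined on $\mathbb{D}^2$. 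What actually carries Theorem \ref{4.7} — and distinguishes it from Theorem \ref{3.10}, which does need the connectedness hypothesis on $\pi_1(Z_{\theta_j}\cap\mathbb{D}^2)$ — is that here you span over all of $\Omega$, and $\pi_1(Z_{\theta_j}\cap\mathbb{D}^2)\setminus\Omega$ is contained in $\sigma_e(S_z)\cap\mathbb{D}$, which by Lemma \ref{lemma3.9} is a finite union of real-one-dimensional curves, hence a thin set in $\pi_1(Z_{\theta_j}\cap\mathbb{D}^2)$. Thus $Z_{\theta_j}\cap(\Omega\times\mathbb{D})$ is dense in $Z_{\theta_j}\cap\mathbb{D}^2$, and the vanishing of the holomorphic restriction $f|_{Z_{\theta_j}\cap\mathbb{D}^2}$ on this dense open set propagates to all of $Z_{\theta_j}\cap\mathbb{D}^2$ by analyticity. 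Replacing your single-component propagation claim with this density argument closes the gap; the remainder of your proof (iterating over multiplicities $k_j$ and all $j$, then applying Lemma \ref{Rudin}) is then correct.
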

\begin{proof}
By Theorem \ref{kernel} and Corollary \ref{spec2}, we see that for $\lambda\in \Omega_i$, $Ran(S_{z-\lambda}^*)=\mathcal{K}_{\theta}$  and the dimension of $kerS_{z-\lambda}^*$ is equal to $n_i$, which is a constant.
 To see  that $S_z^*\in \mathfrak{B}_{\mathbf{n}}(\Omega^*)$, suppose $f\in \mathcal{K}_{\theta}$ and $f$ is orthogonal to
$$\bigvee\limits_{\lambda\in \Omega} Ker S_{z-\lambda}^*=\left\{\frac{g(w)}{1-\overline{\lambda}z} :~ g(w)\in H_w^2\ominus \theta(\lambda,w)H_w^2\right\},$$
then by a similar argument with the proof of Theorem \ref{3.10}, we can obtain that $f/\theta$ is holomorphic in $\mathbb{D}^2$. As $\theta$ is a  good inner function, by Lemma \ref{Rudin}, we know that $f\in \mathcal{K}_{\theta}$, and  hence $f=0$.  Therefore
\[ \bigvee\limits_{\lambda\in \Omega} Ker S_{z-\lambda}^*=\mathcal{K}_{\theta}, \]
and this gives that $S_z^*\in \mathfrak{B}_{\mathbf{n}}(\Omega^*)$. The proof is completed.
\end{proof}

By Theorem \ref{reduce} and \ref{4.7},   $S_z$ is reducible if and only if $E_{S_z^*}$ is reducible. As seen in Example \ref{key-example}, there exists a generalized Cowen-Douglas operator $T$ such that each $E_T(\Omega_i)$ is irreducible as a   Hermitian holomorphic  bundle, and $T$ is reducible. However, in the case of $\theta$ has no univariate inner factors,  we will see later that if $E_{S_z^*}(\Omega_i)$ is irreducible for some $i$, then $S_z$ is irreducible.
\begin{definition}\label{Def4.9}
Let $T$ be a generalized Cowen-Douglas operator. We say  $E_T$ is    strictly reducible   if $E_T$ is reducible and for each reducible decomposition  $E_T=E^1\bigoplus E^2$,  $E^1(\Omega_i)$ is nontrivial for any $i$, that is, $E^1(\Omega_i)$ is not $\{0\}$ or $E_T(\Omega_i)$.
In particular, if $E_T$ is    strictly reducible, then each  $E_T(\Omega_i)$ is  also  reducible.
\end{definition}

\begin{theorem}\label{strictly-reducible}
Suppose $\theta(z,w)$ is a rational inner function  without  univariate inner factors and   $\mathcal{K}_{\theta}=H^2(\mathbb{D}^2)\ominus \theta H^2(\mathbb{D}^2)$ is the associated Beurling type quotient module. Then  $S_z$ is reducible if and only if $E_{S_z^*}$ is strictly reducible.
\end{theorem}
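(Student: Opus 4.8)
The proof rests on Theorem~\ref{4.7} and Theorem~\ref{reduce}. Since $\theta$ has no univariate inner factor, Theorem~\ref{4.7} gives $S_z^*\in\mathfrak{B}_\alpha(\Omega^*)$, and then Theorem~\ref{reduce} already yields that $S_z$ is reducible if and only if $E_{S_z^*}$ is reducible. So the only additional content of the statement is that, under this hypothesis on $\theta$, \emph{every} reducing decomposition of $E_{S_z^*}$ is automatically strict. The ``if'' implication is then immediate: if $E_{S_z^*}$ is strictly reducible it is in particular reducible, and Theorem~\ref{reduce} gives that $S_z^*$, hence $S_z$, is reducible.

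For the ``only if'' implication I would argue by contradiction. Suppose $S_z$ is reducible but $E_{S_z^*}$ is not strictly reducible. By Theorem~\ref{reduce} there is a reducing decomposition $E_{S_z^*}=E^1\oplus E^2$, and by failure of strictness, after possibly interchanging the roles of $E^1$ and $E^2$, we may assume $E^1(\Omega_{i_0})=\{0\}$ for some connected component $\Omega_{i_0}$; the associated reducing subspace $\mathcal{M}^1=\bigvee_i\bigvee_{w\in\Omega_i}E^1(w)$ is still nonzero because $E^1$ is a nontrivial bundle. From the orthogonality condition in the definition of reducibility of $E_{S_z^*}$ together with $E^2(\Omega_{i_0})=E_{S_z^*}(\Omega_{i_0})$, we obtain
\[\mathcal{M}^1\ \perp\ \bigvee_{\lambda\in\Omega_{i_0}}Ker\,S_{z-\lambda}^*.\]
Using the description of $Ker\,S_{z-\lambda}^*$ from Theorem~\ref{kernel}(3) and the reproducing identity $\big\langle f,\tfrac{g(w)}{1-\bar\lambda z}\big\rangle=\langle f(\lambda,\cdot),g\rangle_{H_w^2}$, this says precisely that $f(\lambda,\cdot)\in\theta(\lambda,\cdot)H_w^2$ for every $f\in\mathcal{M}^1$ and every $\lambda\in\Omega_{i_0}$; here $\theta(\lambda,\cdot)H_w^2$ is a closed subspace, since $\lambda\in\Omega_{i_0}$ lies outside $\sigma_e(S_z)$ and so $\theta(\lambda,\cdot)$ is bounded away from zero on $\mathbb{T}$ (compare the proof of Theorem~\ref{spec2}).

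Next I would bootstrap using that $\mathcal{M}^1$ is $S_z^*$-invariant. Write $f=\sum_{n\ge0}c_n(w)z^n$; since $S_z^*f=\big(f(z,w)-f(0,w)\big)/z$ also lies in $\mathcal{M}^1$, both $f(\lambda,\cdot)$ and $\big(f(\lambda,\cdot)-f(0,\cdot)\big)/\lambda$ lie in $\theta(\lambda,\cdot)H_w^2$ for $\lambda\in\Omega_{i_0}\setminus\{0\}$, and subtracting gives $f(0,\cdot)\in\theta(\lambda,\cdot)H_w^2$ for all such $\lambda$. Hence $f(0,\cdot)$ vanishes on $\pi_2\big(Z_\theta\cap(\Omega_{i_0}\times\mathbb{D})\big)$, where $\pi_2(z,w)=w$. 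The crucial geometric fact is that this set has nonempty interior: by Lemma~\ref{existence}, $\Omega_{i_0}\subseteq\pi_1(Z_{\theta_j}\cap\mathbb{D}^2)$ for each irreducible factor $\theta_j$, so $Z_{\theta_j}\cap(\Omega_{i_0}\times\mathbb{D})$ is a nonempty open subset of the irreducible algebraic curve $Z_{\theta_j}$; as $\theta_j$ has no univariate inner factor, $Z_{\theta_j}$ contains no vertical line $\{w=w_0\}$, so $\pi_2$ is nonconstant on $Z_{\theta_j}$, hence open, and $\pi_2\big(Z_{\theta_j}\cap(\Omega_{i_0}\times\mathbb{D})\big)$ contains an open subset $V\subseteq\mathbb{D}$. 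Thus the holomorphic function $f(0,\cdot)$ vanishes on $V$, so $f(0,\cdot)\equiv0$. Applying this conclusion to $S_z^{*n}f\in\mathcal{M}^1$ for every $n\ge0$ gives $c_n=(S_z^{*n}f)(0,\cdot)=0$ for all $n$, i.e.\ $f=0$; since $f\in\mathcal{M}^1$ was arbitrary, $\mathcal{M}^1=\{0\}$, contradicting $E^1$ being nontrivial. Therefore every reducing decomposition of $E_{S_z^*}$ is strict, which completes the proof.

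The step I expect to be the main obstacle is the geometric claim that $\pi_2\big(Z_\theta\cap(\Omega_{i_0}\times\mathbb{D})\big)$ has nonempty interior: this is exactly where the irreducibility of the algebraic components of $Z_\theta$ and the absence of a univariate $w$-inner factor must be used together, and one has to be careful that a nonempty open piece of an irreducible curve cannot project onto a set of $w$-values that is too thin to force a nonzero $H^2$ function to vanish. A secondary point needing care is the bookkeeping around conjugation (recall $E_{S_z^*}$ lives over $\Omega^*$) and the verification that $\theta(\lambda,\cdot)H_w^2$ is closed for $\lambda$ in a Fredholm component, so that the orthogonality condition genuinely translates into the divisibility statement $f(\lambda,\cdot)\in\theta(\lambda,\cdot)H_w^2$.
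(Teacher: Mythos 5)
Your proposal is correct, and its skeleton coincides with the paper's: both reduce the statement via Theorem~\ref{reduce} to showing that a reducing decomposition $E_{S_z^*}=E^1\oplus E^2$ with $E^1(\Omega_{i_0})=\{0\}$ on some component forces $E^1$ to vanish entirely, and both hinge on exactly the same geometric input — since $\theta$ has no univariate inner factor, every branch of $Z_\theta$ over $\Omega_{i_0}$ is nonconstant, so its $w$-image is open. Where you genuinely diverge is in how the contradiction is extracted. The paper stays at the bundle level: it expands a local anti-holomorphic frame $e_j(\lambda)$ of $E^1(\Omega_i)$ in the natural kernel basis $K_\lambda(z)K^{(l)}_{a_k(\lambda)}(w)$, pairs against $K_\zeta(z)K_{\rho_n(\zeta)}(w)$ for $\zeta\in\Omega_{i_0}$, and uses the linear independence of the functions $K^{(l)}_{a_k(\lambda)}(\cdot)$ on an open set of $w$-values to annihilate all the frame coefficients. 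You instead pass to the Hilbert-space side: the orthogonality becomes the slice condition $f(\lambda,\cdot)\in\theta(\lambda,\cdot)H_w^2$ for every $f$ in the reducing subspace $\mathcal{M}^1$ (Theorem~\ref{kernel}(3)), and $S_z^*$-invariance of $\mathcal{M}^1$ lets you peel off the Taylor coefficients $c_n(w)$ one at a time, each vanishing on the open set $\pi_2\bigl(Z_\theta\cap(\Omega_{i_0}\times\mathbb{D})\bigr)$ whose nonemptiness you correctly secure via Lemma~\ref{existence}. Your route trades the paper's frame bookkeeping and linear-independence step for the verifications (which you flag and handle correctly) that $\theta(\lambda,\cdot)H_w^2$ is closed on a Fredholm component and that the spans of $E^1$ and $E^2$ are genuinely reducing subspaces of $S_z$; both arguments are sound, and yours has the mild advantage of never needing to choose a frame or distinguish the derivative kernels.
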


\begin{proof} It suffices to show that if $E_{S_z^*}$ is  not strictly reducible, then $S_z$ is irreducible.
If $\Omega$ is connected,  by Lemma \ref{reducibility},  the conclusion is clear. Now assume that $\Omega=\bigcup\limits_{i=1}^N \Omega_i$ has at least two connected components, where $\Omega_i, i=1,2\ldots, N$ are the connected components of $\Omega$, $N\geq 2$. For convenience, we deal with the complex conjugate of $E_{S_z^*}$, and write
\[ E_{S_z^*}= \bigsqcup_{i=1}^N E_{S_z^*}(\Omega_i), \]
where $E_{S_z^*}(\Omega_i)=\left\{(\lambda,x)\in \Omega_i\times \mathcal{K}_{\theta}:    x\in Ker~ S_{z-\lambda}^*\right\}$ is a Hermitian anti-holomorphic bundle over $\Omega_i$.  Next we will show that $E_{S_z^*}$ is irreducible by contradiction and this will  yield that $S_z$ is irreducible.

 Suppose that  $E_{S_z^*}$ is  reducible and $E_{S_z^*}=E^1\oplus E^2$, since $E_{S_z^*}$ is not strictly reducible, there is some $i$ such that either $E^1(\Omega_i)$ or $E^2(\Omega_i)$ equals $\{0\}$. Without loss of generality, we assume $E^1(\Omega_1)=\{0\}$, $E^2(\Omega_1)=E_{S_z^*}(\Omega_1)$, and
$$E_{S_z^*}(\Omega_i)=E^1(\Omega_i)\oplus E^2(\Omega_i),$$
where $E^1(\Omega_i)\perp E^2(\Omega_j)$ for $i\neq j$.
For $\lambda\in \Omega_i$, there is a natural basis of $Ker S_{z-\lambda}^*$ with the form of
\[ K_\lambda(z)K_{a_1(\lambda)}(w),~~K_\lambda(z)K_{a_1(\lambda)}'(w),~~\cdots,~~K_\lambda(z)K_{a_1(\lambda)}^{(k_1)}(w)  \]
\[ K_\lambda(z)K_{a_2(\lambda)}(w),~~K_\lambda(z)K_{a_2(\lambda)}'(w),~~\cdots,~~K_\lambda(z)K_{a_2(\lambda)}^{(k_2)}(w)  \]
\[ \vdots\]
\[ K_\lambda(z)K_{a_r(\lambda)}(w),~~K_\lambda(z)K_{a_r(\lambda)}'(w),~~\cdots,~~K_\lambda(z)K_{a_r(\lambda)}^{(k_r)}(w)  \]
where $\{ a_k(\lambda)\}$  are locally anti-holomorphic functions and $K^{(l)}_\zeta(w)=\frac{l! w^l}{(1-\bar{\zeta}w)^{l+1}}$.
Assume $\{e_1(\lambda),\cdots, e_{m_i}(\lambda)\}$ is a local anti-holomorphic frame of $E^1(\Omega_i)$, then every $e_j(\lambda)$ can be written as a linear combination of the above natural basis with coefficients $f_{kl}^j(\lambda)$, where $f_{kl}^j$ is anti-holomorphic in some open set of $\Omega_i$.  For any $\zeta\in \Omega_1$, there is a neighborhood $\Delta$ of $\zeta$ and some  functions $\rho_n$ holomorphic in $\Delta$ such that $\theta(\zeta,\rho_n(\zeta))=0$ holds on $\Delta$. Since $\theta$ has no univariate inner factors, every function $\rho_n(\zeta)$ is not a constant function.      Note that $K_\zeta(z)K_{\rho_n(\zeta)}(w)\in Ker S_{z-\zeta}^*$  and  $E^1(\Omega_i)\perp E^2(\Omega_1)$, we have
 \[ 0=\langle e_j(\lambda), K_\zeta(z)K_{\rho_n(\zeta)}(w) \rangle=\sum_{k,l} f_{kl}^j(\lambda)K_\lambda(\zeta)K_{a_k(\lambda)}^{(l)}(\rho_n(\zeta))  \]
for any $\lambda\in \Omega_i$ and $\zeta\in \Delta$. Since the range of $\rho_n$ is also an open set, it implies that
\[\sum_{k,l} f_{kl}^j(\lambda) K_{a_k(\lambda)}^{(l)}(w)=0 \]
 holds for any $\lambda\in \Omega_i$ and $w$ in some open set of $\mathbb{D}$.
 But note that for every given $\lambda\in \Omega_i$, the functions $\{~ K_{a_k(\lambda)}^{(l)}(w)~\}$
are linear independent, hence $f_{kl}^j(\lambda)=0$ for any $k,l$ and $\lambda\in \Omega_i$, which implies that $e_j(\lambda)\equiv 0$. Thus all $E^1(\Omega_i)$ are zero vector bundle. This is a contradiction.  Therefore $E_{S_z^*}$ is irreducible. By Theorem \ref{reduce}, we get that $S_z$ is irreducible and this completes the proof.
\end{proof}

\textbf{Remark:}In fact, when $\theta$ has an inner factor of the form $\phi(w)$, then $S_z^*$ is the direct sum of a Cowen-Douglas operator over $\mathbb{D}$ and a generalized Cowen-Douglas operator. For instance, let $\theta(z,w)=b_{w_0}(w)\varphi(z,w)$, where $\varphi$ is a $(n,1)$-type rational inner function without univariate inner factor. Then we have
\[H^2(\mathbb{D}^2)\ominus \theta H^2(\mathbb{D}^2)=\left( H^2(\mathbb{D}^2)\ominus b_{w_0}(w)H^2(\mathbb{D}^2) \right)
\bigoplus b_{w_0}(w)\left(H^2(\mathbb{D}^2)\ominus \varphi H^2(\mathbb{D}^2)\right). \]
Thus $S_z^{\theta}$ is unitarily equivalent to $S_z^{b_{w_0}}\bigoplus S_z^{\varphi}$ (here $S_z^{\theta}$ means the compressed shift operator $S_z$ on the quotient module $\mathcal{K}_\theta$). Suppose the Generalized Cowen-Douglas domain of $(\bigoplus S_z^{\varphi})^*$ is $\Omega$, then  $(S_z^{\theta})^*$ is a Generalized Cowen-Douglas operator over $\mathbb{D}\setminus \partial \Omega$, and $E_{(S_z^{\theta})^*}$ has the following decomposition:
\begin{align*}
  E^1(\Omega)&=span\{~K_{w_0}(w_0)\cdot K_\lambda K_{a(\lambda)}-K_{a(\lambda)}(w_0)\cdot K_\lambda K_{w_0}~\} & E^1(\mathbb{D}\setminus \overline{\Omega})&=\{0\}, \\
  E^2(\Omega)&=span\{~ K_\lambda K_{w_0}~\} &E^2(\mathbb{D}\setminus \overline{\Omega})&=span\{~ K_\lambda K_{w_0}~\}.
\end{align*}

\vspace{1cm}

 By Theorem \ref{strictly-reducible}, if the zeros counter function $Z_\theta$ is $1$ on some connected component  of $\Omega$, then $S_z$ is irreducible. Therefore $S_z^*$ is irreducible on  quotient modules induced by $(n,1)$-type rational inner functions $\theta$ satisfying the condition in Theorem \ref{strictly-reducible}, and this fact was obtained in \cite{ZYL} by a  different method.
The following example is another application of Theorem \ref{strictly-reducible}.
\begin{example}
Let $\theta(z,w)=\prod\limits_{i=1}^n \frac{zw-t_i}{1-t_izw}$  be a rational inner function with $0<t_1<t_2\leq t_3\leq\cdots \leq t_n<t_{n+1}=1$. Then we have
\[\sigma(S_z)\setminus \sigma_e(S_z)=\bigcup_{i=1}^n ~\{ \lambda\in \mathbb{C}: t_i<|\lambda|<t_{i+1} \}. \]
Since $Z_\theta(\lambda)=1$ on $\{ \lambda\in \mathbb{C}: t_1<|\lambda|<t_{2} \}$, by Theorem \ref{4.7}, $S_z$ is irreducible on $\mathcal{K}_{\theta}$.

\end{example}

It is not hard to see that the   proof for Theorem \ref{strictly-reducible} also holds for the polynomial  quotient module $H^2(\mathbb{D}^2)\ominus [p]$ such that $S_z^*$ is a generalized Cowen-Douglas operator. Therefore it is natural to ask the following question.
 \begin{question}
 Suppose $p(z,w)$ is a polynomial   without univariate factors , then when is $S_z^*$ a generalized Cowen-Douglas operator on   $H^2(\mathbb{D}^2)\ominus [p]$?
 \end{question}
This question is equivalent to ask whether the polynomial submodule $[p]$ is determined by its zeros in $\mathbb{D}^2$ (counting  multiplicity)? It seems to be positive for any polynomial but we cannot give a proof at this moment.  If it does, then   $S_z^*$ is a generalized Cowen-Douglas operator on $\Omega$ with index $\alpha_i=Z_p(\lambda), \lambda \in \Omega_i$, where   $\Omega=\sigma(S_z)\setminus \sigma_e(S_z)=\bigcup_i \Omega_i$. In this case,  $S_z$ is reducible if and only if $E_{S_z^*}$ is strictly reducible. We will give a further discussion for certain polynomials in next section.

We end this section with a concrete example that $S_z^*$ (not a Cowen-Douglas operator) is reducible. It is just a modification of Example \ref{counterexample}.
\begin{example}\label{counterexample2}
Take $0<c<1/8$, define
\[ \theta(z,w)= \frac{2z^2w^2-z+c}{2-zw^2+cz^2w^2}. \]
Let
\begin{equation*}
h_1(z,w)=
\begin{cases}
  z-\frac{1-\sqrt{1-8cw^2}}{4w^2} &if\quad w\neq 0\\
  z-c&if\quad w=0
\end{cases}
\end{equation*}
\[ h_2(z,w)=2zw^2-(1+\sqrt{1-8cw^2})/2 ,\]
then
\[\Omega=\pi_1(Z_\theta \cap \mathbb{D}^2)= \Omega_1 \cup \Omega_2=\pi_1(Z_{h_1} \cap \mathbb{D}^2)\cup \pi_1(Z_{h_2} \cap \mathbb{D}^2), \]
and the index of $S_z^*$ is $(2,2)$. Let $\rho(\lambda)=\sqrt{\frac{\lambda-c}{2\lambda^2}}$, then
\begin{equation*}
Ker(S_{z-\lambda}^* )=
\begin{cases}
~~ span~ \{ K_\lambda(z)K_{\rho(\lambda)}(w), K_\lambda(z) K_{-\rho(\lambda)}(w) ~\}
  &if\quad \lambda\in \Omega,\lambda\neq c\\
~~ span~ \{ K_{\lambda}(z), K_\lambda(z)w  \}
  &if\quad \lambda =c
\end{cases}
\end{equation*}
It is not hard to check that
\[ (\lambda, K_\lambda(z)K_{\rho(\lambda)}(w)+ K_\lambda(z) K_{-\rho(\lambda)}(w)) \]
and
\[ \left(\lambda, \frac{1}{2\overline{\rho(\lambda)}}  [K_\lambda(z)K_{\rho(\lambda)}(w)- K_\lambda(z) K_{-\rho(\lambda)}(w))] \right) \]
gives a reducible decomposition of $E_{S_z^*}$, hence by Theorem \ref{strictly-reducible}, $S_z$ is reducible.
\end{example}

\section{ Polynomial quotient modules $[z^m-w^n]^\perp$ and $[(z-w)^n]^\perp$}
 In the section, we will study the compressed shift operators on certain polynomial quotient modules   by using the geometric approach.

\begin{definition}\label{divisor}
  Let $p(z,w)$ be a polynomial.  we say $p$ has divisor property  for $H^2(\mathbb{D}^2)$ if for any $f\in H^2(\mathbb{D}^2)$ such that $f/p$ is holomorphic in $\mathbb{D}^2$, then  $f\in [p]$.
\end{definition}
If  $p$ has no   factors depending only on variable $z$,  $S_z^*$ is a generalized Cowen-Douglas operator on the quotient module $H^2(\mathbb{D}^2)\ominus [p]$ if and only if $p$ has divisor property. In the case of unit disk, every $p(z)$ has the divisor property for $H^2(\mathbb{D})$ since the existence of inner-outer factorization, hence the definition is trivial. For the case of  bidisk, as we have seen, the numerator of a rational inner function has divisor property. For general polynomials, the following theorem in \cite{DGN} gives an affirmative answer.

\begin{theorem}[\cite{DGN}, Theorem 2.1]\label{5.3}
Let $A_{\beta}^2$ be the classical weighted Bergman space over disk if $\beta>-1$ and $A_{-1}^2$ denote the Hardy space. Suppose $\varphi$ is holomorphic in some neighborhood of $\mathbb{D}^n$. Then
\[ [\varphi]_n=\{~~ \varphi f \in A_{\overrightarrow{\beta},n}^2: f~~is~~holomorphic~~in~~\mathbb{D}^n     ~~\}. ~~\]
Here, $\overrightarrow{\beta}=(\beta_i)_{i=1}^n$ with $\beta_i\geq -1$, $A_{\overrightarrow{\beta},n}^2=\bigotimes_{i=1}^n A_{\beta_i}^2$, and $[\varphi]_n$ is the submodule in $A_{\overrightarrow{\beta},n}^2$ generated by $\varphi$.
\end{theorem}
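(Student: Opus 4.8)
The plan is to prove the two inclusions separately; the inclusion $[\varphi]_n\subseteq\{\varphi f\in A^2_{\overrightarrow{\beta},n}: f\ \text{holomorphic in}\ \mathbb{D}^n\}$ is the routine one, while the reverse inclusion carries essentially all of the content. For the easy direction I would argue that $M:=\{\varphi f: f\in\mathcal{O}(\mathbb{D}^n),\ \varphi f\in A^2_{\overrightarrow{\beta},n}\}$ is a closed $\mathbb{C}[z]$-submodule of $A^2_{\overrightarrow{\beta},n}$ containing $\varphi$; since $[\varphi]_n$ is by definition the smallest such submodule, this gives $[\varphi]_n\subseteq M$. That $M$ is a submodule containing $\varphi$ is immediate. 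For closedness, suppose $\varphi f_k\to g$ in $A^2_{\overrightarrow{\beta},n}$; norm convergence forces locally uniform convergence on $\mathbb{D}^n$ (point evaluations are bounded), so $f_k=(\varphi f_k)/\varphi$ converges locally uniformly on $\mathbb{D}^n\setminus Z_\varphi$ to a holomorphic function, which is locally bounded across the thin analytic set $Z_\varphi$ because it agrees with $g/\varphi$ and the vanishing order of $g$ along $Z_\varphi$ is at least that of $\varphi$ (a closed condition under locally uniform limits). A standard removable-singularity argument (Riemann slicewise, then Hartogs) produces $f\in\mathcal{O}(\mathbb{D}^n)$ with $g=\varphi f$, so $g\in M$.

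For the hard inclusion $M\subseteq[\varphi]_n$, fix $g=\varphi f$ with $f\in\mathcal{O}(\mathbb{D}^n)$ and $g\in A^2_{\overrightarrow{\beta},n}$, and set $f_s(z):=f(sz)$ for $0<s<1$. Since $f$ is holomorphic on $\mathbb{D}^n$, each $f_s$ is holomorphic on a neighborhood of $\overline{\mathbb{D}^n}$, hence a uniform limit on $\overline{\mathbb{D}^n}$ of its Taylor polynomials; as $\varphi$ is bounded on $\overline{\mathbb{D}^n}$ and the defining measure of $A^2_{\overrightarrow{\beta},n}$ is finite (in the Hardy case, the circular $L^2$-means are dominated by the sup), it follows that $\varphi f_s\in[\varphi]_n$. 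Thus it suffices to prove $\varphi f_s\to g$ in $A^2_{\overrightarrow{\beta},n}$ as $s\uparrow1$. Now $\varphi f_s\to\varphi f=g$ pointwise on $\mathbb{D}^n$, and $A^2_{\overrightarrow{\beta},n}$ is a reproducing kernel Hilbert space, so it is enough to show $\sup_{s<1}\|\varphi f_s\|<\infty$ and $\limsup_{s\uparrow1}\|\varphi f_s\|\le\|g\|$: pointwise convergence of a norm-bounded family then upgrades to weak convergence to $g$, and weak convergence together with norm convergence gives strong convergence. Equivalently, writing $\|g-\varphi q\|_{A^2_{\overrightarrow{\beta},n}}=\|g/\varphi-q\|_{A^2(|\varphi|^2\,d\mu)}$, the whole hard inclusion amounts to polynomial density in the degenerate weighted space $A^2(|\varphi|^2\,d\mu)$, i.e. the weight $|\varphi|^2$, which vanishes only on the thin set $Z_\varphi$ and there to finite order, does not obstruct polynomial approximation.

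After the change of variables $w=sz$ one is reduced to comparing $|\varphi(w/s)|^2$ integrated against the dilated weight with $|\varphi(w)|^2$ integrated against the original weight, both multiplied by $|f(w)|^2$; the discrepancy of the underlying measures is controlled by a monotonicity of the Bergman factors $(1-|w_i|^2)^{\beta_i}$ for $\beta_i\ge 0$ (and in the Hardy case $\beta_i=-1$ by the monotonicity in the radius of the circular $L^2$-means), so everything comes down to a pointwise bound for $|\varphi(w/s)|/|\varphi(w)|$. This last estimate, near $Z_\varphi\cap\partial\mathbb{D}^n$, is where I expect the real difficulty to lie: away from $Z_\varphi$ the ratio is harmlessly bounded, but in a collar around $Z_\varphi\cap\partial\mathbb{D}^n$ one must exploit that $\varphi$ extends holomorphically across $\partial\mathbb{D}^n$, so that $Z_\varphi$ is an honest analytic hypersurface meeting the boundary with only finite-order vanishing and $w\mapsto w/s$ pushes $w$ off $Z_\varphi$; quantitatively one wants a local bound of the shape $|\varphi(w/s)|\lesssim|\varphi(w)|+(1-s)$, which converts the collar contribution into $\|g\|^2+(1-s)^2\int_{\mathrm{collar}}|f|^2$, the second term being $o(1)$ once the collar width is tied to $s$. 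Making this uniform as $s\uparrow1$, in particular at higher-order and singular points of $Z_\varphi$ on $\partial\mathbb{D}^n$, is the delicate point and the main obstacle.

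A more robust alternative, which I would fall back on if the uniform collar estimate proves stubborn, is induction on $n$. The base case $n=1$ is classical: in the Hardy case one uses the inner--outer factorization in $H^2(\mathbb{D})$, and in the weighted Bergman case the cyclicity of functions that are zero-free in $\mathbb{D}$ and holomorphic past $\overline{\mathbb{D}}$, together with the fact that multiplication by a finite Blaschke product is bounded below on $A^2_\beta(\mathbb{D})$, which together give $[\varphi]_1=B\,A^2_\beta(\mathbb{D})$ where $B$ is the Blaschke product of the zeros of $\varphi$ in $\mathbb{D}$. The inductive step then reduces to dimension $n-1$ by slicing in one variable, applying the one-variable result fiberwise, and patching the resulting approximants holomorphically in the remaining variables by a Hartogs/normal-families argument; this way the boundary analysis only ever has to be carried out in one variable, where the collar estimate above is elementary.
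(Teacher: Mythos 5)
The paper does not prove this statement; Theorem \ref{5.3} is imported verbatim from \cite{DGN} (Theorem 2.1) and used as a black box, so there is no ``paper's own proof'' against which to compare. You are therefore supplying a proof from scratch, and it has to be judged on its own merits.

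The overall skeleton is reasonable. The easy inclusion $[\varphi]_n\subseteq M$ via closedness of $M$ is correct (with the lower semicontinuity of vanishing order doing the work at $Z_\varphi$), and choosing radial dilations $f_s$ to approximate $g=\varphi f$ from inside $[\varphi]_n$ is a natural strategy: $\varphi f_s\in[\varphi]_n$, pointwise convergence is clear, and the reduction to $\sup_s\|\varphi f_s\|<\infty$ together with $\limsup_s\|\varphi f_s\|\le\|g\|$ is a legitimate way to upgrade to norm convergence in a reproducing-kernel Hilbert space. You also correctly locate the crux at the set $Z_\varphi\cap\partial\mathbb{D}^n$.

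However, the quantitative estimate you propose there does not close the gap, and this is where the argument actually breaks. The bound $|\varphi(w/s)|\lesssim|\varphi(w)|+(1-s)$, even granted uniformly, leads to the collar contribution $(1-s)^2\int_{\mathrm{collar}}|f|^2$, and this quantity is generally not $o(1)$: if $\varphi$ vanishes to order $k\ge 2$ along a piece of $Z_\varphi$ meeting $\partial\mathbb{D}^n$ and $f$ has a pole of the corresponding order (which is exactly the interesting case $g\ne 0$ on $Z_\varphi\cap\partial\mathbb{D}^n$), then $\int_{\mathrm{collar}}|f|^2$ blows up like $(1-s)^{-N}$ with $N$ growing in $k$, swamping the $(1-s)^2$ factor. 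For instance in one variable with $\varphi=(1-z)^k$ and $f=(1-z)^{-(k-1)}$, a direct computation shows that the $m$-th term in the binomial expansion of $\varphi f_s-g$ has norm of size $(1-s)^{m}\cdot(1-s)^{3/2-m}=(1-s)^{3/2}$, i.e.\ the estimate survives only because of a cancellation between the $(1-s)^m$ prefactor and the blow-up of the $L^2$-norm of $z^m(1-z)/(1-sz)^m$; your collar bound discards precisely this cancellation. So the pointwise inequality is the wrong invariant: one needs a finer estimate of $|\varphi(w/s)-\varphi(w)|$ in terms of $|\varphi|$ itself (or its fractional powers / derivative), not merely $(1-s)$, and this has to be carried out uniformly at singular and high-order points of $Z_\varphi$ on $\partial\mathbb{D}^n$ --- which is exactly the point you flag as ``the main obstacle'' and leave open. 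The fallback by induction on $n$ is likewise only a sketch: applying the one-variable theorem fibre by fibre produces approximants that depend on the parameter only measurably, and promoting them to a single holomorphic approximant in all variables by ``Hartogs/normal families'' is itself a substantial step that needs an argument, not an invocation. In short: correct roadmap, correct identification of the hard step, but the hard step is not actually done.
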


By the above theorem, take $n=2$ and $\overrightarrow{\beta}=(-1,-1)$, we conclude that every polynomial $p(z,w)$ has the divisor property.

It is known that  for a polynomial $q$, if $Z_q\cap \mathbb{D}^2=\emptyset$, then $[q]=H^2(\mathbb{D}^2)$ (cf.\cite{CG}, Proposition 2.2.13). So we always assume every algebraic component of $p$   has a nonempty intersection with $\mathbb{D}^2$. The following theorem is an extension of Theorem \ref{kernel}   and the proof is similar, so we omit it.
\begin{theorem}\label{5.1}
Let $p$ be a polynomial without factors depending only on  $z$ and $M=[p]$. The followings holds.

  \begin{enumerate}
\setlength{\itemsep}{0pt}
\item  For   $\lambda\in \mathbb{T}$,  then
\[ Ker S_{z-\lambda} =Ker S_{z-\lambda}^*=\{0\}.\]

\item For $\lambda\in \mathbb{D}$, we have
\[ Ker S_{z-\lambda}= \{0\}  \]
and
\[Ker S_{z-\lambda}^*=\left\{\frac{g(w)}{1-\overline{\lambda}z}:  g(w)\in H^2\ominus p(\lambda,w)H^2 \right\}.\]

\item The range of $S_{z-\lambda}$ is closed if and only if $p(\lambda, \cdot)$ has no zeros on $\mathbb{T}$. Hence for $\lambda\in \mathbb{D}\setminus \pi_1(Z_p\cap \mathbb{D}\times \mathbb{T})$, $S_{z-\lambda}$ is Fredholm with
\[ ind(S_{z-\lambda}^*)=Z_p(\lambda), \]
where $Z_p(\lambda)$ denotes the number of zeros of $p(\lambda,\cdot)$ in $\mathbb{D}$.
\end{enumerate}
\end{theorem}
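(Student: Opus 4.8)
The plan is to follow the proof of Theorem~\ref{kernel} almost verbatim, making the single substitution that the one–variable inner–outer factorization (which fails on the bidisk) is replaced by the divisor property of polynomials supplied by Theorem~\ref{5.3}: if $F\in H^2(\mathbb{D}^2)$ and $F/p$ is holomorphic in $\mathbb{D}^2$, then $F\in[p]$. The decisive point is that, since $p$ has no factor depending only on $z$, one has $p(\lambda,\cdot)\not\equiv0$ for every $\lambda\in\mathbb{C}$; this is exactly what removes the exceptional ``$B(\lambda)=0$'' branch of Theorem~\ref{kernel}, so parts (1) and (2) are actually simpler here than in the Beurling case.

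For the kernel computations, write $N=H^2(\mathbb{D}^2)\ominus[p]$ and note that $S_{z-\lambda}f=0$ with $f\in N$ is equivalent to $(z-\lambda)f\in[p]$, since $S_z$ is the compression of $T_z$ to $N$. As $(z-\lambda)f\in H^2(\mathbb{D}^2)$, Theorem~\ref{5.3} yields $(z-\lambda)f=pg$ with $g$ holomorphic in $\mathbb{D}^2$. If $\lambda\in\mathbb{T}$, then $z-\lambda$ is zero–free on $\mathbb{D}^2$, so $h:=g/(z-\lambda)$ is holomorphic in $\mathbb{D}^2$ and $ph=f\in H^2(\mathbb{D}^2)$, whence $f\in[p]\cap N=\{0\}$. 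If $\lambda\in\mathbb{D}$, restricting $pg=(z-\lambda)f$ to the slice $\{z=\lambda\}$ and using $p(\lambda,\cdot)\not\equiv0$ (which has only isolated zeros) forces $g(\lambda,\cdot)\equiv0$; hence $g$ vanishes on $\{z=\lambda\}$, $g/(z-\lambda)$ is again holomorphic in $\mathbb{D}^2$, and the same conclusion $f=0$ follows. For the cokernels, $S_z^{*}f=\bigl(f(z,w)-f(0,w)\bigr)/z$ gives $f(z,w)=f(0,w)/(1-\overline{\lambda}z)$; for $\lambda\in\mathbb{T}$ this lies in $H^2(\mathbb{D}^2)$ only when $f(0,w)\equiv0$, so $f=0$, while for $\lambda\in\mathbb{D}$ one tests $g(w)/(1-\overline{\lambda}z)$ against $ph$, $h\in H^2(\mathbb{D}^2)$ (these span $[p]$): the reproducing–kernel identity $\langle\,g(w)/(1-\overline{\lambda}z),\,ph\,\rangle=\langle\,g,\,p(\lambda,\cdot)h(\lambda,\cdot)\,\rangle_{H^2_w}$ together with the freedom to let $h$ depend on $w$ alone shows that $g(w)/(1-\overline{\lambda}z)\in N$ precisely when $g\perp p(\lambda,\cdot)H^2_w$, which is the stated description of $\mathrm{Ker}\,S_{z-\lambda}^{*}$.

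For part (3) (and throughout taking $\lambda\in\mathbb{D}$), I would use the model–theoretic facts valid for an arbitrary $z$-invariant submodule: by (\ref{2.1}), $L(\lambda)|_{M\ominus zM}$ coincides, up to the constant unitaries $U,V$ and a unitary summand $W$, with the characteristic function $\Theta_{S_z}(\lambda)$, and by Lemma~\ref{Yang}, $S_z-\lambda$ is Fredholm iff $L_{M\ominus zM}(\lambda)\colon M\ominus zM\to H^2_w$ is Fredholm, with equal indices. It then remains to show that $L_{M\ominus zM}(\lambda)$ is Fredholm exactly when $p(\lambda,\cdot)$ has no zero on $\mathbb{T}$; I would obtain this by describing $M\ominus zM$ as the Beurling–type wandering subspace of $T_z$ acting on $[p]$ and recognizing the restriction of the left evaluation there as a Toeplitz–type operator whose symbol is governed by $p(\lambda,\cdot)$, and which is Fredholm precisely when its symbol stays away from $0$ on $\mathbb{T}$. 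Once the range of $S_{z-\lambda}$ is closed, part (2) gives $\mathrm{Ker}\,S_{z-\lambda}=\{0\}$, while $\mathrm{Ker}\,S_{z-\lambda}^{*}$ is isomorphic to $H^2_w\ominus\overline{p(\lambda,\cdot)H^2_w}$, whose dimension is the number $Z_p(\lambda)$ of zeros of $p(\lambda,\cdot)$ in $\mathbb{D}$; hence $S_{z-\lambda}^{*}$ is Fredholm with $\mathrm{ind}(S_{z-\lambda}^{*})=Z_p(\lambda)$.

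I expect the main obstacle to be this last identification in part (3): for $M=\theta H^2(\mathbb{D}^2)$ one has the transparent $M\ominus zM=\theta H^2_w$ and the clean intertwining (\ref{Toeplitz}), whereas for a polynomial submodule $[p]$ the wandering subspace $[p]\ominus z[p]$ is not of that form, so making the Toeplitz reduction — and hence the dichotomy between Fredholm and non-Fredholm behaviour — precise requires genuine bookkeeping with the functional model. Everything else, namely the two kernel computations and the index count, is a routine (and slightly shortened) adaptation of the proof of Theorem~\ref{kernel}.
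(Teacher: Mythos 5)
Your treatment of parts (1) and (2) is correct and complete, and it follows exactly the route the paper intends: the paper omits the proof of Theorem~\ref{5.1} with the remark that it is similar to Theorem~\ref{kernel}, and the one substantive change --- replacing the good-inner-function divisibility lemma (Lemma~\ref{Rudin}) by the divisor property supplied by Theorem~\ref{5.3}, together with the observation that $p(\lambda,\cdot)\not\equiv 0$ for every $\lambda$ because $p$ has no factor in $z$ alone --- is precisely the substitution you make. Your argument for $\mathrm{Ker}\,S_{z-\lambda}=\{0\}$ when $\lambda\in\mathbb{T}$ (divide by the zero-free function $z-\lambda$ and invoke the divisor property) is in fact cleaner than the $H^1$-approximation argument used in part (1) of Theorem~\ref{kernel}, and the reproducing-kernel computation of the cokernel is fine.

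Part (3), however, is not proved; it is a plan, and the step you yourself flag as ``the main obstacle'' is exactly where the argument is missing. In the Beurling case the closed-range dichotomy rests entirely on the identity $M\ominus zM=\theta H^2_w$ and the unitary identification~(\ref{Toeplitz}) of $L_{M\ominus zM}(\lambda)$ with the one-variable Toeplitz operator $T_{\theta(\lambda,\cdot)}$, from which ``closed range $\iff$ symbol bounded away from $0$ on $\mathbb{T}$'' is immediate. For $M=[p]$ the wandering subspace $[p]\ominus z[p]$ is not of the form $p\cdot(\text{subspace of }H^2_w)$, there is no analogue of the unitary $U$, and you neither exhibit the ``Toeplitz-type operator whose symbol is governed by $p(\lambda,\cdot)$'' nor prove its Fredholm criterion; both implications of the equivalence in (3) therefore remain unestablished. (The index count itself is unproblematic once Fredholmness is known: $\mathrm{Ker}\,S_{z-\lambda}=\{0\}$ and $\dim\mathrm{Ker}\,S_{z-\lambda}^{*}=\dim\bigl(H^2_w\ominus\overline{p(\lambda,\cdot)H^2_w}\bigr)=Z_p(\lambda)$ follow from part (2), since $\overline{p(\lambda,\cdot)H^2_w}=B_\lambda H^2_w$ with $B_\lambda$ the finite Blaschke product formed from the zeros of $p(\lambda,\cdot)$ in $\mathbb{D}$.) To close the gap you would need either to carry out the wandering-subspace bookkeeping you allude to, or to argue directly --- for example, by showing $S_{z-\lambda}$ is bounded below when $p(\lambda,\cdot)$ is zero-free on $\mathbb{T}$, and by producing a normalized sequence $f_n$ with $\|S_{z-\lambda}f_n\|\to 0$ built from kernels $K_\lambda(z)K_{r w_0}(w)$, $r\to 1$, when $p(\lambda,w_0)=0$ for some $w_0\in\mathbb{T}$.
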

Similar to the proof of Theorem \ref{4.7} and Theorem \ref{strictly-reducible}, we have the following corollary.

\begin{theorem}\label{5.5}
Let $p$ be a polynomial without univariate factors, then the compressed shift $S_z^*$ on $[p]^\perp$ is a generalized Cowen-Douglas operator over $\mathbb{D}\setminus \pi_1(Z_p\cap \mathbb{D}\times \mathbb{T})$. Moreover,  $S_z$ is reducible if and only if $E_{S_z^*}$ is strictly reducible.
\end{theorem}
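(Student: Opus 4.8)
The plan is to mirror the structure already established for the rational-inner case (Theorem \ref{4.7} and Theorem \ref{strictly-reducible}), replacing the role of the good-inner-function factorization lemma (Lemma \ref{Rudin}) by the divisor property, which is available for every polynomial by Theorem \ref{5.3}. First I would fix $M=[p]$ and set $\Omega=\mathbb{D}\setminus \pi_1(Z_p\cap \mathbb{D}\times \mathbb{T})$. By Theorem \ref{5.1}(1)--(3) we already know that for every $\lambda\in\Omega$ the operator $S_{z-\lambda}$ has trivial kernel, $S_{z-\lambda}^*$ has closed range equal to $[p]^\perp$, and $\dim\operatorname{Ker}S_{z-\lambda}^*=Z_p(\lambda)$, which is locally constant on $\Omega$ (the Fredholm index is integer-valued and continuous, and $\operatorname{Ker}S_{z-\lambda}=\{0\}$ throughout). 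This already gives conditions (1) and (3) in Definition \ref{general-definition} on each connected component $\Omega_i$, with $\alpha_i=Z_p(\lambda)$ for $\lambda\in\Omega_i$.

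The substantive point is the spanning condition (2): $\bigvee_{\lambda\in\Omega}\operatorname{Ker}S_{z-\lambda}^*=[p]^\perp$. Here I would argue exactly as in the proof of Theorem \ref{3.10}/Theorem \ref{4.7}. Take $f\in[p]^\perp$ orthogonal to $\bigvee_{\lambda\in\Omega}\operatorname{Ker}S_{z-\lambda}^*=\{\,g(w)/(1-\bar\lambda z):g\in H^2_w\ominus p(\lambda,w)H^2_w,\ \lambda\in\Omega\,\}$. Writing $p=\prod_j p_j^{k_j}$ with $p_j$ irreducible, the orthogonality forces $f(\lambda,\cdot)$ to vanish at the zeros of $p(\lambda,\cdot)$ in $\mathbb{D}$; since each $p_j(\cdot,w)$ has only simple zeros off a finite set of $w$, Hartogs' theorem gives that $f/p_j$ is holomorphic on $\Omega\times\mathbb{D}$, and then analytic continuation of the algebraic branches (as in Lemma \ref{existence} and Theorem \ref{3.10}) pushes this to all of $\mathbb{D}^2$; iterating on the factors $p_j^{k_j}$ shows $f/p$ is holomorphic on $\mathbb{D}^2$. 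Now the divisor property (Theorem \ref{5.3} with $n=2$, $\overrightarrow\beta=(-1,-1)$) yields $f\in[p]$, hence $f=0$. This proves $S_z^*\in\mathfrak{B}_\alpha(\Omega^*)$.

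For the reducibility statement, by Theorem \ref{reduce} applied to the generalized Cowen-Douglas operator $S_z^*$, $S_z$ is reducible iff $E_{S_z^*}$ is reducible, so it remains to upgrade ``reducible'' to ``strictly reducible.'' This is where I would repeat verbatim the argument of Theorem \ref{strictly-reducible}: if $E_{S_z^*}=E^1\oplus E^2$ with $E^1(\Omega_1)=\{0\}$ for some component, then using a local anti-holomorphic frame of $E^1(\Omega_i)$ expanded in the canonical basis $K_\lambda(z)K_{a_k(\lambda)}^{(l)}(w)$ from Remark \ref{remark2.8}, the orthogonality $E^1(\Omega_i)\perp E^2(\Omega_1)$ together with the fact that the branches $\rho_n(\zeta)$ of $p(\zeta,\cdot)=0$ over $\Omega_1$ are nonconstant (because $p$ has no univariate factors) and have open range forces the frame coefficients to vanish, so $E^1(\Omega_i)=\{0\}$ for all $i$, a contradiction. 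The only ingredient that differs from Section 4 is that one uses Theorem \ref{5.1} in place of Theorem \ref{kernel} and the polynomial divisor property in place of Lemma \ref{Rudin}; everything else transfers. The main obstacle — already resolved by invoking Theorem \ref{5.3} — is precisely that a polynomial submodule need not arise from an inner function, so one has no inner-outer factorization and must know a priori that $[p]$ is cut out exactly by the vanishing of $p$ in $\mathbb{D}^2$ with multiplicity; granting that, the proof is routine.
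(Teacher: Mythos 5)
Your proposal matches the paper's intended proof exactly: the paper signals this by stating that Theorem~\ref{5.5} follows ``similar to the proof of Theorem~\ref{4.7} and Theorem~\ref{strictly-reducible},'' and you have correctly identified the only substantive change, namely replacing the good-inner-function factorization (Lemma~\ref{Rudin}) by the divisor property supplied by Theorem~\ref{5.3}, while using Theorem~\ref{5.1} in place of Theorem~\ref{kernel}. The remaining ingredients (Hartogs, analytic continuation along the algebraic branches, Theorem~\ref{reduce}, and the frame-coefficient argument for strict reducibility from Theorem~\ref{strictly-reducible}) transfer verbatim, as you observe.
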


By Theorem \ref{5.5}, for the polynomials $z^m-w^n$ and $(z-w)^n$, the associated compressed shifts are always generalized Cowen-Douglas operators. Moreover, by Theorem \ref{5.1}, it is easy to check that in both two cases, $\sigma(S_z)=\overline{\mathbb{D}}, \sigma_e(S_z)=\mathbb{T}$, therefore the following corollary is immediate.
\begin{corollary}\label{5.4}
 Let $p(z,w)$ be the polynomial $z^m-w^n$ or $(z-w)^n$, then $S_z^*$ on the quotient module $[p]^\perp$ is a Cowen-Douglas operator over $\mathbb{D}$.
\end{corollary}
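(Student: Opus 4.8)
The plan is to read this off from Theorem~\ref{5.5} once one checks that the relevant geometric set is all of $\mathbb{D}$ and that the Fredholm index is constant there. First I would verify that neither $p(z,w)=z^m-w^n$ nor $p(z,w)=(z-w)^n$ has a univariate factor, so that Theorem~\ref{5.5} applies: if a nonconstant $q(z)$ divided $z^m-w^n$ in $\mathbb{C}[z,w]$, then any root $z_0$ of $q$ would satisfy $z_0^m=w^n$ identically in $w$, which is impossible; the symmetric argument excludes a factor in $w$ alone, and $(z-w)^n$ is a power of the irreducible polynomial $z-w$, which depends on both variables. Hence by Theorem~\ref{5.5}, $S_z^*$ on $[p]^\perp$ is a generalized Cowen-Douglas operator over $\mathbb{D}\setminus\pi_1\bigl(Z_p\cap(\mathbb{D}\times\mathbb{T})\bigr)$.

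Next I would show this exceptional set is empty. If $p(\lambda,w)=0$ with $|w|=1$, then either $\lambda^m=w^n$ (first case) or $w=\lambda$ (second case), and in both cases $|\lambda|=1$, so $\lambda\notin\mathbb{D}$; thus $\pi_1\bigl(Z_p\cap(\mathbb{D}\times\mathbb{T})\bigr)=\emptyset$ and the generalized Cowen-Douglas domain is $\mathbb{D}$, which is connected. (Equivalently, a direct inspection via Theorem~\ref{5.1} gives $\sigma(S_z)=\overline{\mathbb{D}}$ and $\sigma_e(S_z)=\mathbb{T}$, so $\sigma(S_z)\setminus\sigma_e(S_z)=\mathbb{D}$ has the single connected component $\Omega_1=\mathbb{D}$.) Then I would compute the index: by Theorem~\ref{5.1}(3), for $\lambda\in\mathbb{D}$ one has $\dim\ker S_{z-\lambda}^*=Z_p(\lambda)$, the number of zeros of $p(\lambda,\cdot)$ in $\mathbb{D}$ counted with multiplicity. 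For $z^m-w^n$ the equation $w^n=\lambda^m$ has exactly $n$ roots (with multiplicity), each of modulus $|\lambda|^{m/n}<1$ --- at $\lambda=0$ the single root $w=0$ has multiplicity $n$ --- so $Z_p(\lambda)\equiv n$; for $(z-w)^n$ the only zero is $w=\lambda\in\mathbb{D}$ of multiplicity $n$, so again $Z_p(\lambda)\equiv n$.

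Finally, assembling these facts: $S_z^*$ satisfies the surjectivity condition (1) and the spanning condition (2) of Definition~\ref{general-definition} over the connected open set $\mathbb{D}$ --- these are exactly what is established in the proofs of Theorems~\ref{4.7} and~\ref{5.5} --- and $\dim\ker S_{z-\lambda}^*\equiv n$ there; hence by the case $N=1$ of Definition~\ref{general-definition}, $S_z^*\in\mathfrak{B}_n(\mathbb{D})$, i.e.\ it is a Cowen-Douglas operator over $\mathbb{D}$. I do not expect a genuine obstacle here; the only point that deserves a sentence of care is the coalescence at $\lambda=0$ in the case $z^m-w^n$, where the $n$ zeros in $w$ collapse to one zero of multiplicity $n$ --- but since $Z_p(\lambda)$ counts multiplicity, the index, and hence membership in $\mathfrak{B}_n(\mathbb{D})$, is unaffected.
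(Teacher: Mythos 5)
Your proposal is correct and follows essentially the same route as the paper: invoke Theorem~\ref{5.5} (after checking the absence of univariate factors) and observe that $\pi_1\bigl(Z_p\cap(\mathbb{D}\times\mathbb{T})\bigr)=\emptyset$, equivalently $\sigma(S_z)=\overline{\mathbb{D}}$ and $\sigma_e(S_z)=\mathbb{T}$, so the generalized Cowen--Douglas domain is the single connected component $\mathbb{D}$. Your additional computation of the constant index $n$ (including the multiplicity count at $\lambda=0$) is correct but not needed for the statement as given.
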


%

%

In the following, by using the geometric approach, we study the reducing subspaces of $S_z$ on the quotient modules $\mathcal{N}=[z^m-w^n]^\perp$ and $\mathcal{N}=[(z-w)^n]^\perp$. For convenience,   we   view the Hermitian holomorphic vector bundle over $\Omega^*$ as the Hermitian anti-holomorphic vector bundle over $\Omega$.  For a anti-holomorphic cross-section $e(\lambda)$ over $\Omega$, $e^{(k)}(\lambda)$ means $\frac{\partial^k}{\partial \bar{\lambda}^k}e(\lambda)$. We take the negative real axis as the branch line for the function $z^{\frac{1}{n}}$, i.e., if   write $z=re^{i\theta}, -\pi<\theta<\pi$, then $z^{\frac{1}{n}}=r^{\frac{1}{n}} e^{i\theta/n}$. For $j=0,1,\ldots,n-1$,  define
\[e_j(\lambda)=\frac{1}{1-\bar{\lambda}z}\frac{w^j}{1-\bar{\lambda}^m w^n}=\frac{1}{n\bar{\lambda}^{\frac{jm}{n}}} \sum_{k=0}^{n-1}~ \zeta^{jk} K_{\lambda,~ \lambda^{\frac{m}{n}}\zeta^k},\]
where $\zeta=e^{\frac{2\pi i}{n}}$ is the $n$-th primitive roots of unity and $K_{\lambda, \mu}=\frac{1}{1-\bar{\lambda}z} \frac{1}{1-\bar{\mu}w}$ is the reproducing kernel of $H^2(\mathbb{D}^2)$. Then we have the following theorem.

\begin{theorem}\label{zm-wn}
For the quotient module $\mathcal{N}=[z^m-w^n]^\perp$, $S_z^*$ is a Cowen-Douglas operator over $\mathbb{D}$ with index $n$, and   $\{e_j(\lambda)\}_{j=0}^{n-1}$ is an orthogonal anti-holomorphic frame of $E_{S_z^*}$ over $\mathbb{D}$. Hence it gives a minimal reducing subspace decomposition of $S_z:$
\[ \mathcal{N}=\bigoplus_{j=0}^{n-1} \mathcal{H}_j, \]
where $\mathcal{H}_j=\bigvee_{\lambda\in \mathbb{D}} e_j(\lambda)$. 
Moreover, the block diagonalization of the canonical connection on $E_{S_z^*}$ is $M(1, \otimes ~n)$, hence by Theorem \ref{von-alg}, $V^*(S_z)$ is isomorphic to $M_n(\mathbb{C})$.
\end{theorem}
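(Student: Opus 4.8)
The plan is to verify the three assertions in order: that $S_z^*$ lies in $\mathfrak{B}_n(\mathbb{D})$, that $\{e_j(\lambda)\}_{j=0}^{n-1}$ is an orthogonal anti-holomorphic frame, and that the resulting decomposition is minimal with the stated commutant. The membership $S_z^* \in \mathfrak{B}_n(\mathbb{D})$ is already handed to us by Corollary \ref{5.4}, so the real content is the explicit frame and the structure of $V^*(S_z)$. First I would use Theorem \ref{5.1}(2): for $\lambda \in \mathbb{D}$, $Ker\,S_{z-\lambda}^* = \{ g(w)/(1-\bar\lambda z) : g \in H^2 \ominus p(\lambda,w)H^2 \}$, where here $p(\lambda,w) = \lambda^m - w^n$ (up to a unit). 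The zeros of $w \mapsto \lambda^m - w^n$ in $\mathbb{D}$ are exactly the $n$ roots $\lambda^{m/n}\zeta^k$, $k = 0,\dots,n-1$ (all distinct and all inside $\mathbb{D}$ since $|\lambda|<1$), so $H^2 \ominus p(\lambda,w)H^2$ is the $n$-dimensional model space spanned by $\{K_{\lambda^{m/n}\zeta^k}(w)\}$, equivalently by $\{1, w, \dots, w^{n-1}\}$. This gives $\dim Ker\,S_{z-\lambda}^* = n$ and shows $e_j(\lambda) = \frac{1}{1-\bar\lambda z}\cdot\frac{w^j}{1-\bar\lambda^m w^n}$ lies in $Ker\,S_{z-\lambda}^*$ for each $j$; the closed-form expansion in terms of $K_{\lambda,\lambda^{m/n}\zeta^k}$ follows from the finite-geometric-series / partial-fraction identity $\frac{w^j}{1-\bar\lambda^m w^n} = \frac{1}{n\bar\lambda^{jm/n}}\sum_{k=0}^{n-1}\zeta^{jk}K_{\lambda^{m/n}\zeta^k}(w)$, which is a routine Lagrange-interpolation computation at the $n$-th roots of $\bar\lambda^{-m}$. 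Anti-holomorphy of $\lambda\mapsto e_j(\lambda)$ on the cut plane is visible from the first expression (a product of anti-holomorphic scalars with fixed vectors), and linear independence is clear since the $w^j$ are.

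Next I would establish orthogonality of the frame. The cleanest route is the reproducing-kernel computation: for $f,g \in \mathcal{N}$, since $\mathcal{N} = H^2(\mathbb{D}^2)\ominus[z^m-w^n]$ and $e_j(\lambda)\in\mathcal{N}$, we have $\langle e_i(\mu), e_j(\lambda)\rangle_{\mathcal{N}} = \langle e_i(\mu), e_j(\lambda)\rangle_{H^2(\mathbb{D}^2)}$. Using $\langle K_{\lambda,\mu}, K_{\lambda',\mu'}\rangle = \frac{1}{1-\bar\lambda\lambda'}\cdot\frac{1}{1-\bar\mu\mu'}$ and the expansion above, $\langle e_i(\mu), e_j(\lambda)\rangle$ becomes $\frac{1}{1-\bar\mu\lambda}\cdot\frac{1}{n^2\mu^{im/n}\bar\lambda^{jm/n}}\sum_{k,l}\zeta^{jk}\overline{\zeta^{il}}\cdot\frac{1}{1-\overline{\mu^{m/n}\zeta^l}\,\lambda^{m/n}\zeta^k}$. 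Summing the inner double sum over $l$ first (a geometric sum in $\zeta$) collapses it, and one finds the answer is proportional to $\delta_{ij}$ times a function of $\mu,\bar\lambda$; in particular, setting $i\neq j$ gives $0$. Actually a slicker argument avoids this algebra: $\mathcal{H}_j := \bigvee_{\lambda}e_j(\lambda)$ is, by the first formula for $e_j$, contained in the range of the contraction $h(z,w)\mapsto P_{\mathcal N}\big(z\text{-Hardy}\otimes \text{span}\{w^j\text{-slice of }\mathcal K_{w^n}\}\big)$ — more to the point, observe that $w^j\cdot\mathcal K_{\lambda^{m/n}\text{-stuff}}$ for distinct residues $j \bmod n$ live in orthogonal pieces of the Fourier decomposition $H^2(\mathbb{D}^2)=\bigoplus_{r=0}^{n-1}\bigoplus_{s\geq 0}\mathbb{C}z^*w^{r+ns}$-type grading. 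Concretely, conjugating by the unitary $T_z \mapsto$ itself and the substitution $w \rightsquigarrow w^n$ turns $z^m - w^n$ into a one-variable picture: $\mathcal{N}$ is graded by the residue of the $w$-degree mod $n$, $S_z$ preserves each graded piece, and $e_j(\lambda)$ sits in the $j$-th graded piece. Hence $\mathcal{H}_i \perp \mathcal{H}_j$ for $i \neq j$, each $\mathcal{H}_j$ reduces $S_z$, and $\mathcal{N} = \bigoplus_j \mathcal{H}_j$ since the $e_j(\lambda)$ jointly span (being a frame for $E_{S_z^*}$ and $S_z^*\in\mathfrak{B}_n$).

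For minimality and the commutant, I would invoke the machinery already quoted. Each $\mathcal{H}_j$ is cyclic for $S_z|_{\mathcal{H}_j}$ generated by $e_j(\lambda)$, and in fact $S_z^*|_{\mathcal H_j}$ is unitarily equivalent to a multiplication-like model of index $1$, i.e. a $\mathfrak{B}_1(\mathbb{D})$ operator, hence irreducible by Lemma \ref{reducibility}; thus each $\mathcal H_j$ is a minimal reducing subspace. To compute $V^*(S_z)$ via Theorem \ref{von-alg}, I need the block diagonalization of the canonical connection on $E_{S_z^*}$. The key point is that the $\mathcal H_j$ are pairwise \emph{not} unitarily equivalent as reducing subspaces for $S_z$ — equivalently, the line bundles $E_{S_z^*}(\mathbb{D})$ restricted to each $\mathcal H_j$ are pairwise inequivalent Hermitian holomorphic line bundles. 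One checks this by comparing curvatures: the curvature of the line bundle spanned by $e_j$ is $-\partial\bar\partial\log\|e_j(\lambda)\|^2$, and $\|e_j(\lambda)\|^2$ can be read off from the diagonal of the Gram computation above; distinct $j$ give genuinely different real-analytic functions of $|\lambda|^2$ (the $\mu^{im/n}\bar\lambda^{jm/n}$ normalizing factors and the surviving geometric-sum terms differ). Therefore the self-adjoint algebra generated by the curvature and its covariant derivatives is the full diagonal algebra, so the block form is $M(\mathbf n,\otimes\mathbf m)$ with $\mathbf n = (1,\dots,1)$ ($n$ ones) and $\mathbf m = (1,\dots,1)$, i.e. $M(1,\otimes\, n)$ in the paper's notation, and by Theorem \ref{von-alg}, $V^*(S_z) \cong $ commutant of this algebra in $M_n(\mathbb{C})$, which is $M_n(\mathbb{C})$ itself.

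\textbf{Main obstacle.} The delicate step is the inequivalence of the line bundles $E_{S_z^*}|_{\mathcal H_j}$ — equivalently, verifying that the curvature functions for different $j$ are genuinely distinct, not merely the observation that the $\mathcal H_j$ are orthogonal. If two of them happened to be equivalent, the block structure would acquire multiplicity and $V^*(S_z)$ would be smaller. So the crux is an honest (if elementary) computation of $\|e_j(\lambda)\|^2$ and its logarithmic Laplacian, showing the $n$ curvatures are pairwise different as functions on $\mathbb{D}$; everything else is bookkeeping with Theorems \ref{5.1}, \ref{von-alg}, \ref{block-diag} and Lemma \ref{reducibility}.
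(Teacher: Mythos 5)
Your construction of the frame and the orthogonality argument are fine and match the paper: the paper also observes that the $w$-Fourier support of $e_j(\lambda)$ lies in degrees $\equiv j \pmod n$, which gives $\mathcal H_i\perp\mathcal H_j$ immediately. The genuine gap is in the last step, where you have the logic of Theorem \ref{von-alg} exactly backwards. You assert that the ``key point'' is that the line bundles spanned by the $e_j$ are pairwise \emph{inequivalent}, that distinct $j$ give ``genuinely different'' curvatures, and that this forces the block form $M(\mathbf n,\otimes\mathbf m)$ with $\mathbf n=(1,\dots,1)$, $\mathbf m=(1,\dots,1)$, which you then call $M(1,\otimes\,n)$. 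This conflates two different algebras. In the paper's notation $M(1,\otimes\,n)$ means $r=1$, $n_1=1$, $m_1=n$, i.e.\ the scalars $\mathbb{C}I_n$, whose commutant is all of $M_n(\mathbb{C})$; whereas $n$ inequivalent $1\times 1$ blocks each of multiplicity $1$ give the full diagonal algebra $\mathbb{C}^n$, whose commutant is again $\mathbb{C}^n$ — abelian, of dimension $n$, not $M_n(\mathbb{C})$. Indeed Theorem \ref{von-alg} states that $V^*(T)$ is abelian precisely when there is \emph{no} multiplicity, so the inequivalence you set out to prove would yield $V^*(S_z)\cong\mathbb{C}^n$, contradicting the statement you are proving.

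Moreover the inequivalence itself is false. A direct computation (this is what the paper does) gives
\[ \|e_j(\lambda)\|^2=\frac{1}{1-|\lambda|^2}\cdot\frac{1}{1-|\lambda|^{2m}}\qquad\text{for every } j, \]
since $\bigl\|w^j\sum_{k\ge 0}\bar\lambda^{mk}w^{nk}\bigr\|^2=\sum_{k\ge0}|\lambda|^{2mk}$ is independent of $j$. Hence the Gram matrix of the frame is a scalar function times $I_n$, the connection matrix is a scalar $1$-form times $I_n$, all $n$ line bundles are mutually equivalent, and the algebra generated by the curvature and its covariant derivatives is $\mathbb{C}I_n=M(1,\otimes\,n)$ — one block with multiplicity $n$. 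It is this \emph{equivalence} (witnessed concretely by $w^i e_j(\lambda)=w^j e_i(\lambda)$, so multiplication by powers of $w$ intertwines the $\mathcal H_j$) that makes $V^*(S_z)$ the full matrix algebra $M_n(\mathbb{C})$. So the ``main obstacle'' you identify is not only unnecessary but, if carried out honestly, would show the opposite of what you claim it shows; the correct route is to verify that the norms (hence curvatures) coincide, not that they differ.
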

\begin{proof}
For $\lambda\neq 0$, by Theorem \ref{5.1}, $\{~K_{\lambda,~ \lambda^{\frac{m}{n}}\zeta^k}~\}_{k=0}^{n-1}$ is a basis of the fiber $Ker S_{z-\lambda}^*$.  Note that
\begin{equation*}
  \left(
    \begin{array}{c}
      e_0(\lambda) \\
      e_1(\lambda) \\
      \vdots \\
      e_{n-1}(\lambda) \\
    \end{array}
  \right)
  =\left(
     \begin{array}{cccc}
       \frac{1}{n} & 0 & 0 & 0 \\
       0 & \frac{1}{n\bar{\lambda}^{\frac{m}{n}}} & 0 & 0 \\
       0 & 0 & \ddots & 0 \\
       0 & 0 & 0 & \frac{1}{n\bar{\lambda}^{\frac{m(n-1)}{n}}} \\
     \end{array}
   \right)
   \left(
     \begin{array}{cccc}
       1 & 1 & \cdots & 1 \\
       1 & \zeta & \cdots & \zeta^{n-1} \\
        \vdots& \vdots & \vdots & \vdots \\
       1 & \zeta^{n-1} & \cdots & \zeta^{(n-1)^2} \\
     \end{array}
   \right)
   \left(
     \begin{array}{c}
       K_{\lambda, ~\lambda^{\frac{m}{n}}} \\
       K_{\lambda, ~\lambda^{\frac{m}{n}}\zeta} \\
       \vdots \\
       K_{\lambda, ~\lambda^{\frac{m}{n}}\zeta^{n-1}} \\
     \end{array}
   \right),
\end{equation*}
thus $\{e_j(\lambda)\}$ is also a basis of $Ker S_{z-\lambda}^*$.

Since the Fourier coefficients of $e_j(\lambda)$ with respect to  variable $w$ vanishes except for $k\equiv j\mod n$,  we have $$\langle e_i(\lambda),  e_j(\lambda)\rangle=0$$
for $i\neq j$. It is also not hard to check that
\[  \langle e_j(\lambda),  e_j(\lambda)\rangle= \frac{1}{1-|\lambda|^2} \langle \sum_{k=0}^\infty (\bar{\lambda}^m w^n)^k,  \sum_{k=0}^\infty (\bar{\lambda}^m w^n)^k\rangle =\frac{1}{1-|\lambda|^2} \frac{1}{1-|\lambda|^{2m}}.\]
Therefore for the frame $\{e_j(\lambda)\}$, the metric matrix
\[ G=(\langle e_i(\lambda),  e_j(\lambda)\rangle)=\frac{1}{1-|\lambda|^2} \frac{1}{1-|\lambda|^{2m}} I_n.  \]
Hence the connection matrix
\[ \Theta=\bar{\partial} G \cdot G^{-1}=\left(\frac{\lambda}{1-|\lambda|^2}+\frac{m\lambda|\lambda|^{2(m-1)}}{1-|\lambda|^{2m}}\right) \cdot I_n.  \]
Therefore by Theorem \ref{block-diag}, $\mathcal{A}(s)(\lambda)=M(1, \otimes n)$ for all $\lambda \neq 0$. By Theorem \ref{reducing-subspace} and Theorem \ref{von-alg}, the proof is complete.
\end{proof}

\textbf{Remark:} Note that $w^i e_j(\lambda)=w^{j}e_i(\lambda)$, hence $w^i\mathcal{H}_j=w^{j}\mathcal{H}_i$. Write $[z^m-w^n]^\perp=\bigoplus\limits_{j=0}^{n-1} \mathcal{H}_j$, then the operators in $V^*(S_z)$ have the form:
\begin{equation}\label{matrix-form}
  \left(
    \begin{array}{ccccc}
      a_{11}I &a_{12}\bar{w}I & a_{13}\bar{w}^2 I& \cdots & a_{1n}\bar{w}^{n-1}I \\
      a_{21}wI & a_{22}I & a_{23}\bar{w}I & \cdots & a_{2n}\bar{w}^{n-2}I \\
      a_{31}w^2 I & a_{32}wI & a_{33}I& \cdots & a_{3n}\bar{w}^{n-3}I \\
      \vdots & \vdots & \vdots & \ddots & \vdots \\
      a_{n1}w^{n-1}I & a_{n 2}w^{n-2}I & a_{n 3}w^{n-3} I & \cdots & a_{nn}I \\
    \end{array}
  \right),
\end{equation}
where $a_{ij}, i,j=1,\cdots,n$ are arbitrary complex constants.
Thus every rank one projection $(a_i \overline{a_j})$ in $M_n(\mathbb{C})$ corresponds a minimal projection $(a_i w^i \overline{a_j}\bar{w}^j I)$ in $V^*(S_z)$. This fact implies that $S_z$ has infinite minimal reducing subspaces, and all of them are unitarily equivalent.

Moreover, by the spanning property $\mathcal{H}_0=\bigvee_{\lambda\in \mathbb{D}} e_0(\lambda)=\bigvee_{k=0}^\infty e_0^{(k)}(0)$, we can give an orthogonal basis of $\mathcal{H}_0$.  Note that $(\frac{1}{1-\bar{\lambda}z})^{(k)}(0)=k!z^k$, if we write
\[\frac{1}{1-\bar{\lambda}^m w^n}=\sum_{k=0}^\infty (\bar{\lambda}^m w^n)^k,\]
then
\begin{equation*}
(\frac{1}{1-\bar{\lambda}^m w^n})^{(N)}(0)=
\begin{cases}
  ~(mk)!w^{nk}&if\quad N=mk\\
  ~0 &\quad others.
\end{cases}
\end{equation*}
Therefore \begin{align*}
           e_0^{(N)}(0)&=\sum_{k=0}^{k=[\frac{N}{m}]} \dbinom{N}{mk}(mk)!w^{nk} (N-mk)! z^{N-mk}  \\
             & =N! \sum_{k=0}^{k=[\frac{N}{m}]} z^{N-mk} w^{nk},
          \end{align*}
where $[\cdot]$ denotes  the least integer function. Since $e_0^{(N)}(0)$ is homogeneous of degree $N$, $\{e_0^{(N)}(0) \}$  is an orthogonal basis of $\mathcal{H}_0$ and $\{w^j e_0^{(N)}(0)\}$  is an orthogonal basis of $\mathcal{H}_j$. So we obtain the following corollary.
\begin{corollary}


$ \left\{ w^j e_0^{(N)}(0)/\|e_0^{(N)}(0)\|  \right\}_{N=0,j=0}^{\infty, n-1 }$ is an orthonormal basis of the quotient module $[z^m-w^n]^\perp$. Further, $S_z$ is a weighted shift of multiplicity $n$ with weights
\[ \left\{\frac{\sqrt{[\frac{N}{m}]+1}}{\sqrt{[\frac{N+1}{m}]+1}} \right\}_{N=0}^\infty \]
\end{corollary}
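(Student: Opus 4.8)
The plan is to reduce everything to Theorem~\ref{zm-wn} and the two structural facts recorded in the Remark following it, and then carry out one short computation in the quotient module.

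Recall from Theorem~\ref{zm-wn} that $\mathcal{N}=[z^m-w^n]^\perp=\bigoplus_{j=0}^{n-1}\mathcal{H}_j$, that $\mathcal{H}_j=w^j\mathcal{H}_0$ (multiplication by $w^j$ being a surjective isometry $\mathcal{H}_0\to\mathcal{H}_j$, since it is isometric with closed range on $H^2(\mathbb{D}^2)$), and, by the spanning property, that $\mathcal{H}_0=\bigvee_{N\ge 0}e_0^{(N)}(0)$ with $e_0^{(N)}(0)=N!\sum_{k=0}^{[N/m]}z^{N-mk}w^{nk}$. First I would settle the orthonormal basis claim: the monomials appearing in a single $e_0^{(N)}(0)$ have pairwise distinct $w$-exponents, and the map $(N,k)\mapsto(N-mk,\,nk)$ is injective on $\{(N,k):0\le k\le[N/m]\}$, so the monomial supports of $e_0^{(N)}(0)$ and $e_0^{(N')}(0)$ are disjoint for $N\ne N'$. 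Hence $\{e_0^{(N)}(0)\}_{N\ge0}$ is an orthogonal set of nonzero vectors with $\|e_0^{(N)}(0)\|^2=(N!)^2\big([N/m]+1\big)$; together with $\mathcal{H}_0=\bigvee_N e_0^{(N)}(0)$ this gives an orthonormal basis of $\mathcal{H}_0$, which the isometries $w^j$ carry to orthonormal bases of the mutually orthogonal summands $\mathcal{H}_j$, and the union over $0\le j\le n-1$ is the desired orthonormal basis of $\mathcal{N}$.

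Next I would compute $S_z$ in this basis. Since $S_z e_0^{(N)}(0)=P_{\mathcal{N}}\big(z e_0^{(N)}(0)\big)$ and $z e_0^{(N)}(0)=N!\sum_{k=0}^{[N/m]}z^{N+1-mk}w^{nk}$, the key point is that modulo $[z^m-w^n]$ every monomial $z^{N+1-mk}w^{nk}$ is congruent to $z^{N+1}$: writing $z^{N+1-mk}w^{nk}-z^{N+1}=-(z^m-w^n)\,z^{N+1-mk}\sum_{i=0}^{k-1}(w^n)^i(z^m)^{k-1-i}$ exhibits it as a polynomial multiple of $z^m-w^n$, hence an element of $[z^m-w^n]$ by Theorem~\ref{5.3}. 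Consequently $z e_0^{(N)}(0)\equiv N!\big([N/m]+1\big)z^{N+1}$, and similarly $e_0^{(N+1)}(0)\equiv (N+1)!\big([(N+1)/m]+1\big)z^{N+1}$, modulo $[z^m-w^n]$; since $e_0^{(N+1)}(0)\in\mathcal{N}$ the second congruence yields $P_{\mathcal{N}}(z^{N+1})=\frac{1}{(N+1)!\,([(N+1)/m]+1)}\,e_0^{(N+1)}(0)$, so that
\[
S_z e_0^{(N)}(0)=\frac{[N/m]+1}{(N+1)\big([(N+1)/m]+1\big)}\,e_0^{(N+1)}(0).
\]
Dividing by $\|e_0^{(N)}(0)\|$ and inserting the norm formula shows that in the normalized basis $S_z$ maps the $N$-th vector of $\mathcal{H}_0$ to $\frac{\sqrt{[N/m]+1}}{\sqrt{[(N+1)/m]+1}}$ times the $(N+1)$-st. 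To obtain the same weight on each $\mathcal{H}_j$, I would multiply the relation $z e_0^{(N)}(0)-\frac{[N/m]+1}{(N+1)([(N+1)/m]+1)}e_0^{(N+1)}(0)\in[z^m-w^n]$ by $w^j$ and use that $[z^m-w^n]$ is invariant under multiplication by $w$ while $w^j e_0^{(N+1)}(0)\in\mathcal{N}$; this gives the identical formula for $S_z\big(w^j e_0^{(N)}(0)\big)$. Hence $S_z$ is a weighted shift of multiplicity $n$ with the asserted weights.

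The only genuinely substantive step is the congruence observation, namely that $P_{\mathcal{N}}\big(z e_0^{(N)}(0)\big)$ collapses onto a single scalar multiple of $e_0^{(N+1)}(0)$ because all the cross-terms $z^{N+1-mk}w^{nk}$ lie in one coset of $[z^m-w^n]$; everything else is bookkeeping. I would also note that no coprimality hypothesis on $m$ and $n$ is used, so the statement holds in the full generality claimed, and that the branch cut used to define $e_0(\lambda)$ is irrelevant here since only the polynomial Taylor coefficients $e_0^{(N)}(0)$ enter.
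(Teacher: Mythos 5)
Your proof is correct, and it reaches the weights by a slightly different mechanism than the paper. The orthonormal-basis part is essentially the paper's: the paper gets orthogonality of the $e_0^{(N)}(0)$ from homogeneity of degree $N$, which is the same observation as your disjoint-monomial-support argument, and both compute $\|e_0^{(N)}(0)\|^2=(N!)^2\bigl(\left[\tfrac{N}{m}\right]+1\bigr)$. Where you diverge is in evaluating $S_z e_0^{(N)}(0)$. The paper computes the inner products $\langle z e_0^{(N)}(0), e_0^{(k)}(0)\rangle$, finds that only $k=N+1$ survives, and reads off the coefficient from the orthogonal expansion in $\mathcal{H}_0$ --- which tacitly uses Theorem \ref{zm-wn} to know that $S_z e_0^{(N)}(0)$ lies in $\mathcal{H}_0$ and is therefore a scalar multiple of $e_0^{(N+1)}(0)$. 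You instead compute the projection $P_{\mathcal{N}}(z e_0^{(N)}(0))$ outright via the congruence $z^{N+1-mk}w^{nk}\equiv z^{N+1} \pmod{[z^m-w^n]}$, which makes it transparent, without any appeal to the reducing-subspace decomposition, why the image collapses onto a single multiple of $e_0^{(N+1)}(0)$; and multiplying the resulting relation by $w^j$ cleanly transports the identity to each $\mathcal{H}_j$ with the same weight. One cosmetic point: Theorem \ref{5.3} is not needed for the congruence step, since an explicit polynomial multiple of $z^m-w^n$ lies in $[z^m-w^n]$ by the definition of the submodule generated by a polynomial; Theorem \ref{5.3} is the harder converse. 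Your remark that no coprimality of $m$ and $n$ is used is consistent with the paper.
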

\begin{proof}
Note that $\|e_0^{(N)}(0)\|=N! \sqrt{[\frac{N}{m}]+1} $ and
\begin{equation*}
\langle S_z e_0^{(N)}(0), e_0^{(k)}(0) \rangle=
\begin{cases}
 ~(N+1)! N! ([\frac{N}{m}]+1)&if\quad k=N+1\\
 ~0 &\quad others
\end{cases}
\end{equation*}
So
\[S_z \frac{e_0^{(N)}(0)}{\|e_0^{(N)}(0)\|}
=\frac{\sqrt{[\frac{N}{m}]+1}}{\sqrt{[\frac{N+1}{m}]+1}}\frac{e_0^{(N+1)}(0)}{\|e_0^{(N+1)}(0)\|}.\]
\end{proof}


Similar to the proof of Theorem \ref{zm-wn}, one can see that that if $p(z,w)=q(z,w^n)$ for some polynomial $q$ and integer $n\geq 2$, then the kernel space vector bundle of $S_z^*$ is reducible. Therefore we have the following theorem.
\begin{theorem}
Suppose $p(z,w)\in \mathbb{C}[z,w]$ has no factors depending only on  $z$ and $p$ has the divisor property, if $p(z,w)=q(z,w^n)$ for some polynomial $q$ and integer $n\geq 2$, then $S_z$ on $\mathcal{N}=[p]^\perp$ is reducible, and the dimension of the von Neumann  algebra $V^*(S_z)$ is at least  $n^2$.
\end{theorem}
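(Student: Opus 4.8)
The plan is to produce an explicit orthogonal decomposition $\mathcal{N}=[p]^\perp=\bigoplus_{j=0}^{n-1}\mathcal{N}_j$ into reducing subspaces of $S_z$, each unitarily equivalent to $S_z$ on $[q]^\perp$, and to read off both assertions from it. Let $\zeta=e^{2\pi i/n}$ and let $U_\zeta$ be the unitary on $H^2(\mathbb{D}^2)$ defined by $(U_\zeta f)(z,w)=f(z,\zeta w)$. Since $p(z,\zeta w)=q(z,\zeta^n w^n)=q(z,w^n)=p(z,w)$, the unitary $U_\zeta$ sends each generator $z^aw^b p$ of $[p]$ to $\zeta^b z^aw^b p$; hence $U_\zeta[p]=[p]$, $U_\zeta\mathcal{N}=\mathcal{N}$, and, since $U_\zeta T_z=T_z U_\zeta$, the restriction $U:=U_\zeta|_{\mathcal{N}}$ commutes with $S_z=P_{\mathcal{N}}T_z|_{\mathcal{N}}$, hence with $S_z^*$; thus $U\in V^*(S_z)$ and $U^n=I$. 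Its eigenspace for $\zeta^j$ is $\mathcal{N}_j:=\mathcal{N}\cap H^2_j$, where $H^2_j\subset H^2(\mathbb{D}^2)$ denotes the closed span of the monomials $z^aw^b$ with $b\equiv j\pmod n$; these are mutually orthogonal, $\mathcal{N}=\bigoplus_{j=0}^{n-1}\mathcal{N}_j$, and each $\mathcal{N}_j$ reduces $S_z$.

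Next I would identify $(\mathcal{N}_j,S_z|_{\mathcal{N}_j})$ with $([q]^\perp,S_z)$. The map $V_j\colon H^2_j\to H^2(\mathbb{D}^2)$, $z^aw^{kn+j}\mapsto z^aw^k$, is a unitary intertwining $T_z$ with $T_z$; writing $q(z,w^n)=\sum_l c_l(z)w^{nl}$ one checks $V_j\bigl(z^aw^{kn+j}q(z,w^n)\bigr)=z^aw^k q(z,w)$, so $V_j$ carries the submodule $[p]\cap H^2_j$ onto $[q]$ and therefore $V_j(\mathcal{N}_j)=[q]^\perp$. Because $[p]$ is invariant under $T_z$ and $T_w$, the grading projections commute with $P_{\mathcal{N}}$, so $P_{\mathcal{N}}|_{H^2_j}=P_{\mathcal{N}_j}$ and consequently $V_j\,(S_z|_{\mathcal{N}_j})\,V_j^*=S_z|_{[q]^\perp}$. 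Since $q$ also has no univariate factors (a factor $r(z)$ of $q$ would give the univariate factor $r(z)$ of $p$, and a factor $t(w)$ would give $t(w^n)\mid p$) and $Z_p\cap\mathbb{D}^2\neq\emptyset$ forces $Z_q\cap\mathbb{D}^2\neq\emptyset$, we have $[q]\neq H^2(\mathbb{D}^2)$, so $[q]^\perp\neq\{0\}$; hence $\mathcal{N}$ is a direct sum of $n\geq 2$ nonzero reducing subspaces and $S_z$ is reducible. Equivalently, one can follow the proof of Theorem \ref{zm-wn} and check directly that $E_{S_z^*}$ splits, along the $w$-grading of the defect vectors $g(w)/(1-\bar\lambda z)$, into mutually orthogonal subbundles, so that $E_{S_z^*}$ is strictly reducible and Theorem \ref{5.5} applies.

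For the von Neumann algebra, write $A=S_z|_{[q]^\perp}$; the decomposition identifies $S_z$ on $\mathcal{N}$ with $I_n\otimes A$ acting on $\mathbb{C}^n\otimes[q]^\perp$. A block-matrix computation gives $\{I_n\otimes A\}'=M_n(\{A\}')$ and $\{I_n\otimes A^*\}'=M_n(\{A^*\}')$, so $V^*(S_z)=\{I_n\otimes A,I_n\otimes A^*\}'=M_n(V^*(A))$; since $V^*(A)\ni I$ we conclude $\dim V^*(S_z)=n^2\dim V^*(A)\geq n^2$.

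I do not expect a genuine obstacle here: the only steps needing care are the verification that $U_\zeta$ preserves the submodule $[p]$ (where $\zeta^n=1$ is used) and the compatibility of the compression $S_z$ with the $w$-grading (which rests on $[p]$ being invariant under both $T_z$ and $T_w$, so that $P_{\mathcal{N}}$ commutes with the grading projections). Note that the divisor property of $p$ is not actually invoked in this argument; it enters only through the earlier statement placing $S_z^*$ in the generalized Cowen--Douglas class.
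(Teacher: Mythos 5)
Your proof is correct, and it takes a genuinely different and arguably cleaner route than the paper's. The paper works directly with the kernel-space bundle $E_{S_z^*}$: it writes down the natural basis of $Ker\,S_{z-\lambda}^*$ from Remark~\ref{remark2.8}, takes the discrete-Fourier-transform combination of the kernel vectors $K_\lambda(z)K^{(l)}_{\zeta^j a_k(\lambda)}(w)$, computes that the $E^i$-summands live in the $w$-graded pieces $H^2_i$, and then reads off the $\mathcal{H}_i$ and the form of operators in $V^*(S_z)$; this uses the generalized Cowen--Douglas structure (hence the divisor property, via Theorem~\ref{5.5}) to pass from the bundle decomposition to a reducing decomposition of $\mathcal{N}$. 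You instead produce the same orthogonal decomposition $\mathcal{N}=\bigoplus_j(\mathcal{N}\cap H^2_j)$ by exhibiting the order-$n$ unitary $U=U_\zeta|_{\mathcal{N}}\in V^*(S_z)$ and taking its spectral subspaces, then identify each summand with $([q]^\perp,S_z)$ via the explicit unitaries $V_j$, obtaining $S_z\cong I_n\otimes A$ and $V^*(S_z)\cong M_n(V^*(A))$. This buys three things: it avoids any bundle computation, it does not invoke the divisor property at all (as you note), and it upgrades the stated lower bound to the precise identification $V^*(S_z)\cong M_n(V^*(A))$ (in particular $\dim V^*(S_z)\geq n^2$ since $I\in V^*(A)$). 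One small imprecision in your closing remark: the commutation $P_{\mathcal{N}}P_{H^2_j}=P_{H^2_j}P_{\mathcal{N}}$ follows from $U_\zeta[p]=[p]$ (which you established), not from invariance of $[p]$ under $T_z,T_w$; the argument itself is unaffected. Also note the paper observes the result extends to $p(z,B_n(w))$ for a Blaschke product $B_n$, where the symmetry argument does not directly apply; the bundle approach is what makes that extension visible.
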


\begin{proof}
Let $\zeta=e^{\frac{2\pi i}{n}}$ be the $n$-th primitive roots of unity. Since $p$ has the form $q(z,w^n)$, by Remark \ref{remark2.8}, for $\lambda\in \Omega=\sigma(S_z)\setminus \sigma_e(S_z)$, the natural basis of $Ker S_{z-\lambda}^*$ has the form
\[\left\{ K_{\lambda}(z)\cdot K_{\zeta^i a_k(\lambda)}^{(l)}(w) , ~~~~i=0,\cdots, n-1 ~~and~~k\geq0, 0\leq l\leq l_k-1\right\}.\]
where $\{a_k(\lambda)\}$ are the zeros in $\mathbb{D}$ of $q(\lambda,\cdot)$ and $l_k$ be the corresponding order of $a_k(\lambda)$.
 Define
\begin{equation*}
  E^i_\lambda=  span \left\{\frac{1}{n~ \overline{a_k(\lambda)}^i} \sum_{j=0}^{n-1} \zeta^{ij} K_{\lambda}(z)\cdot K_{\zeta^j a_k(\lambda)}^{(l)}(w) ,~~~ ~~k\geq0, 0\leq l\leq l_k-1  \right\}
\end{equation*}
and $E^i=\{(\lambda, x)\in \Omega\times \mathcal{N}~|~ x\in E^i_\lambda\}$.
It is sufficient to show that
\begin{equation}\label{decomposition}
   E_{S_z^*}=E^0\oplus E^1\oplus \cdots \oplus E^{n-1}.
\end{equation}

By a direct computation, we have
\begin{align*}
\frac{1}{n~ \overline{a_k(\lambda)}^i} \sum_{j=0}^{n-1} \zeta^{ij}\cdot K_{\zeta^j a_k(\lambda)}(w) &=\frac{1}{n~ \overline{a_k(\lambda)}^i} \sum_{j=0}^{n-1} ~\sum_{s\geq 0}~\zeta^{ij}\cdot \bar{\zeta}^{sj} ~\overline{a_k(\lambda)}^s ~w^s  \\
  & =\frac{1}{n~ \overline{a_k(\lambda)}^i} \sum_{s\geq 0}~\left( \sum_{j=0}^{n-1} \zeta^{(i-s)j} \right) ~\overline{a_k(\lambda)}^s ~w^s \\
  &=\frac{1}{ \overline{a_k(\lambda)}^i} \sum_{t\geq 0} ~\overline{a_k(\lambda)}^{nt+i} ~w^{nt+i}\\
  &=\frac{w^i}{1-\overline{a_k(\lambda)}^n w^n}
\end{align*}
Similarly, we can obtain that
\[ \frac{1}{n~ \overline{a_k(\lambda)}^i} \sum_{j=0}^{n-1} \zeta^{ij}\cdot K^{(l)}_{\zeta^j a_k(\lambda)}(w)= \frac{l! w^{nl+i}}{(1-\overline{a_k(\lambda)}^n w^n)^{l+1}} \]

By an observation, we can find that for the elements in $E^i$, the Fourier coefficients  with respect to variable $w$ vanishes except for $k\equiv i \mod n$, which implies that (\ref{decomposition}) holds. Moreover, let $\mathcal{H}_i$ be the reducing subspace induced by $E^i$, then $\mathcal{H}_i$ and $\mathcal{H}_{i'}$ are unitarily equivalent since $E^i$ is unitarily equivalent to $E^{i'}$. Then it is easy to see that the operators with the form  (\ref{matrix-form}) are in $V^*(S_z)$. Therefore its dimension is at least $n^2$.
\end{proof}

\textbf{Remark:}In fact, the above theorem holds for any polynomial of the form $p(z,B_n(w))$ with $B_n(w)$ is a Blaschke product of order $n\geq 2$.

\vspace{1cm}

Next we will   consider the quotient module  $\mathcal{N}=[(z-w)^n]^\perp$. The main result is the following theorem.
\begin{theorem}\label{5.8}
For the quotient module $\mathcal{N}=[(z-w)^n]^\perp$, $S_z$ is irreducible.
\end{theorem}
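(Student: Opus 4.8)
The plan is to stay inside the Cowen--Douglas picture. By Corollary~\ref{5.4}, $S_z^{*}$ on $\mathcal{N}=[(z-w)^n]^{\perp}$ belongs to $\mathfrak{B}_n(\mathbb{D})$, so by Lemma~\ref{reducibility} it suffices to prove that the associated Hermitian holomorphic vector bundle $E_{S_z^{*}}$ over $\mathbb{D}$ is irreducible; by Theorem~\ref{von-alg} this is equivalent to showing that the self-adjoint algebra generated by the curvature of $E_{S_z^{*}}$ together with all of its covariant derivatives is the full matrix algebra $M_n(\mathbb{C})$, so that $V^{*}(S_z)=\mathbb{C}I$. The case $n=1$ is immediate since every operator in $\mathfrak{B}_1$ is irreducible, so assume $n\ge 2$.

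First I would pin down the bundle. Since $\lambda-w=-(1-\bar\lambda w)\,b_\lambda(w)$ with $b_\lambda(w)=\frac{w-\lambda}{1-\bar\lambda w}$, and $1-\bar\lambda w$ is invertible in $H^\infty(\mathbb{D})$ for $\lambda\in\mathbb{D}$, we have $(\lambda-w)^n H^2_w=b_\lambda^{\,n}H^2_w$; hence Theorem~\ref{5.1}(2) gives $Ker\, S_{z-\lambda}^{*}=\frac{1}{1-\bar\lambda z}\cdot\mathcal{K}_{b_\lambda^{\,n}}$, where $\mathcal{K}_{b_\lambda^{\,n}}=H^2_w\ominus b_\lambda^{\,n}H^2_w$ is spanned by $\{\frac{w^j}{(1-\bar\lambda w)^{j+1}}:0\le j\le n-1\}$. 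Thus
\[
e_j(\lambda)=\frac{1}{1-\bar\lambda z}\cdot\frac{w^j}{(1-\bar\lambda w)^{j+1}},\qquad 0\le j\le n-1,
\]
is an anti-holomorphic frame of $E_{S_z^{*}}$, and the fibre at $\lambda$ is the tensor product of the line $\mathbb{C}\cdot\frac{1}{1-\bar\lambda z}$ with $\mathcal{K}_{b_\lambda^{\,n}}$. Therefore $E_{S_z^{*}}\cong L\otimes F$, where $L$ is the Szeg\H{o} line bundle in the variable $z$ and $F$ is the jet bundle $\lambda\mapsto\mathcal{K}_{b_\lambda^{\,n}}$ with the $H^2_w$ metric; since tensoring by a line bundle only adds a scalar $(1,1)$-form to the curvature and does not affect the algebra of covariant derivatives, it suffices to show that $F$ is irreducible.

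Next I would compute the metric and curvature of $F$ in the frame $f_j(\lambda)=\frac{w^j}{(1-\bar\lambda w)^{j+1}}$. Expanding reproducing kernels gives the Gram matrix $G(\lambda)=(h_{ij}(\lambda))$ with $h_{ij}(\lambda)=\sum_{m\ge\max(i,j)}\binom{m}{i}\binom{m}{j}\bar\lambda^{\,m-i}\lambda^{\,m-j}$; in particular $G(0)=I_n$, $\partial_{\bar\lambda}G|_0=E_{+}$, where $E_{+}$ is the matrix with $(E_{+})_{i,i+1}=i+1$ and all other entries $0$, and $\partial_\lambda\partial_{\bar\lambda}G|_0=\operatorname{diag}(1^2,2^2,\dots,n^2)$. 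With the canonical connection $\Theta=\bar\partial G\cdot G^{-1}$ and curvature $\mathcal{K}=\partial\Theta$, one finds for the curvature endomorphism $\widehat{\mathcal{K}}(0)=\operatorname{diag}(1^2,\dots,n^2)-E_{+}E_{+}^{*}=n^2\,e_{n-1,n-1}$, which has rank one; so the curvature alone is not enough, and I would compute the higher covariant derivatives $\nabla_\lambda^{a}\nabla_{\bar\lambda}^{b}\mathcal{K}$ at $\lambda=0$ (each an explicit polynomial in the low-order Taylor coefficients of $G$, using $\nabla_\lambda=\partial_\lambda$ and $\nabla_{\bar\lambda}=\partial_{\bar\lambda}+[\Theta_{\bar\lambda},\,\cdot\,]$ with $\Theta_{\bar\lambda}(0)=E_{+}$). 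The key point is that $\nabla_{\bar\lambda}\mathcal{K}|_0$ has a nonzero off-diagonal component $[\Theta_{\bar\lambda}(0),\widehat{\mathcal{K}}(0)]=n^2(n-1)\,e_{n-2,n-1}$; iterating along the diagonals produces a self-adjoint collection of matrices that generates all the matrix units $e_{ij}$. By Theorem~\ref{block-diag} and Theorem~\ref{von-alg}, the curvature algebra is then $M_n(\mathbb{C})$, hence $V^{*}(S_z)=\mathbb{C}I$ and $S_z$ is irreducible.

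The main obstacle is this last bookkeeping step — confirming, uniformly in $n$, that $\mathcal{K}$ together with its covariant derivatives generates all of $M_n(\mathbb{C})$ (for $n=2,3$ this is a direct check; in general it should follow by induction, or by passing to the M\"obius-homogeneous model of $F$, where the covariant derivatives of $\mathcal{K}$ are controlled by the action of $\mathfrak{su}(1,1)$). Conceptually this is exactly where the confluence of the $n$ zeros of $(\lambda-w)^n$ at $w=\lambda$ is decisive: unlike $[z^m-w^n]^{\perp}$ in Theorem~\ref{zm-wn}, where the zeros separate and $E_{S_z^{*}}$ splits into line bundles, here $F$ carries a genuine jet (Jordan) structure and admits no orthogonal holomorphic frame — which is precisely the reason it is irreducible.
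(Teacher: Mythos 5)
Your overall strategy is genuinely different from the paper's. The paper does not compute curvature at all: it takes an arbitrary projection $P$ in $V^{*}(S_z)$, writes $\Phi(\lambda)=\Gamma_{S_z^{*}}P=[a_{kl}(\lambda)]$ in the anti-holomorphic frame $e_i(\lambda)=K_\lambda(z)K_\lambda^{(i)}(w)$, and exploits the crucial frame identity
\[
e_i'(\lambda)=\frac{z}{1-\bar\lambda z}\,e_i(\lambda)+e_{i+1}(\lambda),\qquad 0\le i\le n-2,
\]
together with evaluation against the diagonal kernels $K_{\mu,\mu}$ and the commutation $PS_z=S_zP$, to extract recursions on the $a_{kl}$ that force $[a_{kl}]$ to be lower triangular with constant diagonal; since $P$ is idempotent, $\Phi$ must be $0$ or $I$. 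This is self-contained, elementary, and bypasses any curvature computation. Your route — split $E_{S_z^{*}}\cong L\otimes F$ with $L$ the Szeg\H{o} line bundle and $F$ the jet bundle $\lambda\mapsto\mathcal{K}_{b_\lambda^{\,n}}$, then invoke Theorem~\ref{von-alg} to reduce irreducibility to ``the algebra generated by $\mathcal{K}$ and its covariant derivatives is all of $M_n(\mathbb{C})$'' — is conceptually appealing and correct in its reductions (the metric does factor across the tensor product, and line-bundle twisting does shift the curvature by a scalar only, so passing to $F$ is legitimate; your low-order Taylor data for $G$ and the rank-one identity $\widehat{\mathcal{K}}(0)=n^2 e_{n-1,n-1}$ check out).

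However, there is a genuine gap exactly where you flag it. The decisive claim — that $\mathcal{K}$ together with all covariant derivatives $\nabla_\lambda^a\nabla_{\bar\lambda}^b\mathcal{K}$ generates the full matrix algebra $M_n(\mathbb{C})$, uniformly in $n$ — is asserted but not proved. You offer a plausible mechanism (start from $n^2 e_{n-1,n-1}$, bracket with $\Theta_{\bar\lambda}(0)=E_+$ to produce $e_{n-2,n-1}$, iterate along diagonals), but the single commutator you display is not the whole of $\nabla_{\bar\lambda}\mathcal{K}|_0$, which also contains the uncomputed $\partial_{\bar\lambda}\mathcal{K}|_0$ term; and whether the iteration closes on all $e_{ij}$ requires bookkeeping that you defer to ``a direct check for $n=2,3$'' and ``induction'' or an appeal to an unstated $\mathfrak{su}(1,1)$-homogeneity argument. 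There is also a subtlety about whether $\lambda=0$ lies off the coalescing set of Theorem~\ref{block-diag}, so that the algebra computed at $0$ actually controls reducibility (homogeneity would settle this, but you do not establish it). Until the curvature-algebra computation is completed in a way that closes for all $n$, this sketch does not yet constitute a proof — which is precisely why the paper takes the more pedestrian but airtight commutant route.
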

To prove the theorem, we need the following result on a basis for generalized eigenspaces.
\begin{lemma}[\cite{CD}, Lemma 1.22] \label{5.10}
If $e_1,\cdots,e_n$ are vector-valued holomorphic functions on $\Omega$ such that $e_1(w),\cdots,e_n(w)$ forms a basis for $Ker(T-w)$ for each $w$ in $\Omega$, then
\begin{enumerate}
  \item $(T-w)e_i^{(k)}(w)=ke_i^{(k-1)}(w)$ for all $k$ and $i=1,2,\cdots, n$.
  \item $e_1(w),\cdots, e_n(w),\cdots, e_1^{(k-1)}(w),\cdots,e_n^{(k-1)}(w)$  form a basis for $Ker (T-w)^k$ for $k\geq 1$ and $w$ in $\Omega$.
\end{enumerate}
\end{lemma}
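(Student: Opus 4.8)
The plan is to prove the two assertions separately, obtaining (1) by differentiating the defining identity and (2) by a linear-algebra argument that pairs a ``leading order term'' trick for independence with a short induction for spanning. Note that only the stated hypothesis is needed: $e_1,\dots,e_n$ are holomorphic $\mathcal H$-valued, and $e_1(w),\dots,e_n(w)$ span $\mathrm{Ker}(T-w)$ for each $w$; surjectivity of $T-w$ plays no role.

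For (1), I would start from $(T-w)e_i(w)=0$, which holds for all $w\in\Omega$ and each $i$, and differentiate it $k$ times in $w$. Since $w\mapsto T-w$ is affine with only nonzero derivative $-I$, Leibniz's rule collapses to $\tfrac{d^k}{dw^k}\big[(T-w)e_i(w)\big]=(T-w)e_i^{(k)}(w)-k\,e_i^{(k-1)}(w)$, and setting this equal to zero gives $(T-w)e_i^{(k)}(w)=k\,e_i^{(k-1)}(w)$. Iterating this shows $(T-w)^{j}e_i^{(j)}(w)=j!\,e_i(w)$ and hence $(T-w)^{j+1}e_i^{(j)}(w)=0$, i.e. $e_i^{(j)}(w)\in\mathrm{Ker}(T-w)^{j+1}$; this observation will be used repeatedly in (2).

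For (2), the inclusion $\{e_i^{(j)}(w):1\le i\le n,\ 0\le j\le k-1\}\subset\mathrm{Ker}(T-w)^k$ is immediate from the last remark. For linear independence, suppose $\sum_{i,j}c_{ij}e_i^{(j)}(w)=0$, let $m$ be the largest $j$ with some $c_{im}\ne 0$, and apply $(T-w)^m$: every term with $j<m$ dies (as $e_i^{(j)}(w)\in\mathrm{Ker}(T-w)^{j+1}$), while $e_i^{(m)}(w)\mapsto m!\,e_i(w)$, so $m!\sum_i c_{im}e_i(w)=0$, forcing all $c_{im}=0$ since $\{e_i(w)\}$ is a basis of $\mathrm{Ker}(T-w)$ --- a contradiction. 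For spanning I would induct on $k$: the case $k=1$ is the hypothesis; assuming $\mathrm{Ker}(T-w)^k=\mathrm{span}\{e_i^{(j)}(w):j\le k-1\}$, any $x\in\mathrm{Ker}(T-w)^{k+1}$ satisfies $(T-w)x=\sum_{i,j\le k-1}c_{ij}e_i^{(j)}(w)$, and then $y:=\sum_{i,j\le k-1}\tfrac{c_{ij}}{j+1}e_i^{(j+1)}(w)$ obeys $(T-w)y=(T-w)x$ by (1), so $x-y\in\mathrm{Ker}(T-w)=\mathrm{span}\{e_i(w)\}$ and thus $x\in\mathrm{span}\{e_i^{(j)}(w):j\le k\}$. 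Combining independence with spanning yields the claimed basis, and in particular $\dim\mathrm{Ker}(T-w)^k=nk$.

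I do not anticipate a genuine obstacle. Assertion (1) is a one-line computation once Leibniz's rule is invoked; assertion (2) is elementary. The only points needing mild care are justifying the termwise differentiation in (1) (legitimate because $e_i$ is holomorphic and $T-w$ is affine operator-valued) and keeping the index bookkeeping straight in the independence and induction steps --- neither is substantive.
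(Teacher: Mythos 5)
Your proof is correct and complete; the paper itself states this lemma only as a citation to \cite{CD} and gives no proof, and your argument (Leibniz differentiation of $(T-w)e_i(w)=0$ for part (1), then the leading-order-term trick for independence and the induction via the primitive $y=\sum c_{ij}e_i^{(j+1)}/(j+1)$ for spanning) is precisely the standard Cowen--Douglas argument. Your observation that surjectivity of $T-w$ is never used is also accurate.
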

Now suppose $\Phi$ is an element of $H^\infty_{\mathcal{L}(E_T)}(\Omega)$ for which there exists a bounded operator $X$ in $\{T\}'$ such that $\Gamma_T X=\Phi$ (recall that $\Gamma_T$ is the contractive monomorphism from the  commutant $\{T\}'$ of $T$ into $H^\infty_{\mathcal{L}(E_T)}(\Omega)$, see Proposition \ref{commutant-CD}). If $e_1(w),\cdots, e_n(w)$ is a basis of local holomorphic cross-sections for $E_T$, then by differentiating we obtain
\begin{align*}
  Xe'_i(w) &=(Xe_i(w))'=\Phi(w)e'_i(w)+\Phi'(w)e_i(w) \\
  Xe^{''}_i(w) &=(Xe_i(w))^{''}=\Phi(w)e^{''}_i(w)+2\Phi'(w)e'_i(w)+\Phi^{''}(w)e_i(w)\\
          & ~\vdots ~~~ ~~~~~~~~~~~~~~\vdots \\
 Xe^{(N)}_i(w) &=(Xe_i(w))^{(N)}=\Phi(w)e^{(N)}_i(w)+N\Phi'(w)e^{(N-1)}_i(w)+\cdots+\Phi^{(N)}(w)e_i(w)
\end{align*}
In other words, the block matrix for $X|_{Ker(T-w)^{N+1}}$ relative to the basis $\{e_i^{(j)}(w)\}_{i=1}^n~_{j=0}^N$ is
\begin{equation}\label{block}
  \left(
                                           \begin{array}{ccccc}
                                             \Phi(w) & 0 & 0 & \cdots & 0\\
                                              \Phi'(w)& \Phi(w) & 0 & \cdots & 0\\
                                              \Phi^{''}(w)& 2\Phi'(w) & \Phi(w) & \cdots & 0 \\
                                             \vdots & \vdots & \vdots & & \vdots\\
                                             \Phi^{(N)}(w) & N\Phi^{(N-1)}(w) & \dbinom{N}{2}\Phi^{N-2}(w) & \cdots & \Phi(w) \\
                                           \end{array}
\right).
\end{equation}
Or equivalently,
\[ \left[ \dbinom{i}{j}~\Phi^{(i-j)}(w)) \right], ~~~~here~~\dbinom{i}{j}=0~~if~~i>j.\]

\begin{proof}[Proof of Theorem \ref{5.8}]
When $n=1$, it is  well known that $S_z$ is unitarily equivalent to Bergman shift,  and so it is irreducible. So we assume $n\geq 2$.
 By Theorem \ref{5.1} and Theorem \ref{5.3}, for the quotient module $\mathcal{N}=H^2(\mathbb{D}^2)\ominus[(z-w)^n]$, $S_z^*$ is in $\mathfrak{B}_n(\mathbb{D})$. Let $P$ be a projection in $V^*(S_z)$ and $\Phi=\Gamma_{S_z^*}P$, we will prove our theorem by show that $P$ is either $0$ or $I$. Take
 \[ e_i(\lambda)=K_\lambda(z) K_\lambda^{(i)}(w)=\frac{1}{1-\bar{\lambda}z}\frac{i! w^i}{(1-\bar{\lambda}w)^{i+1}},~~~~~~i=0,1,\cdots,n-1 \]
as an anti-holomorphic frame of $E_{S_z^*}$, then the block matrix for $P|_{Ker(S^*_{z-\lambda})^{N+1}}$ relative to the basis $\{e_i^{(j)}(\lambda)\}_{i=0}^{n-1}~_{j=0}^N$ has the form (\ref{block}).

Note that for $0\leq i\leq n-2$, we have
\begin{align*}
  e'_i(\lambda) & =(K_\lambda(z) K_\lambda^{(i)}(w))'=K'_\lambda(z) K_\lambda^{(i)}(w)+K_\lambda(z) K_\lambda^{(i+1)}(w)\\
   &=\frac{z}{1-\bar{\lambda}z} e_i(\lambda) +e_{i+1}(\lambda).
\end{align*}
Since $ e'_i(\lambda),e_{i+1}(\lambda)$ belong to the quotient module, hence $\frac{z}{1-\bar{\lambda}z} e_i(\lambda)\in \mathcal{N}$ for $0\leq i\leq n-2$, and it implies that
\[ S_z(1-\bar{\lambda}S_z)^{-1}e_i(\lambda)=\frac{z}{1-\bar{\lambda}z} e_i(\lambda), ~~~0\leq i\leq n-2. \]
Suppose $\Phi(\lambda)=[a_{k l}(\lambda) ]_{k,l=0}^{n-1}$ with $a_{kl}(\lambda)$ anti-holomorphic in $\mathbb{D}$. Then we have
\begin{equation*}
  P\left(
    \begin{array}{c}
      e_0(\lambda) \\
      e_1(\lambda) \\
      \vdots \\
      e_{n-1}(\lambda) \\
    \end{array}
  \right)
  =[a_{kl}(\lambda)]\cdot \left(
    \begin{array}{c}
      e_0(\lambda) \\
      e_1(\lambda) \\
      \vdots \\
      e_{n-1}(\lambda) \\
    \end{array}
  \right)
\end{equation*}
and
\begin{equation*}
  P\left(
    \begin{array}{c}
      e'_0(\lambda) \\
      e'_1(\lambda) \\
      \vdots \\
      e'_{n-1}(\lambda) \\
    \end{array}
  \right)
  =[a_{kl}(\lambda)]\cdot \left(
    \begin{array}{c}
      e'_0(\lambda) \\
      e'_1(\lambda) \\
      \vdots \\
      e'_{n-1}(\lambda) \\
    \end{array}
  \right)+
  [a'_{kl}(\lambda)]\cdot \left(
    \begin{array}{c}
      e_0(\lambda) \\
      e_1(\lambda) \\
      \vdots \\
      e_{n-1}(\lambda) \\
    \end{array}
  \right).
\end{equation*}
So for $0\leq i\leq n-2$,
\begin{equation}\label{5.a}
  Pe'_i(\lambda)=\sum_{l=0}^{n-1} a_{il}(\lambda)e'_l(\lambda)+ a'_{il}(\lambda) e_l(\lambda).
\end{equation}
It follows from $PS_z=S_zP$ that
\begin{equation}\label{5.b}
\begin{split}
  Pe'_i(\lambda) & =P\left(S_z(1-\bar{\lambda}S_z)^{-1} e_i(\lambda) +e_{i+1}(\lambda)\right) \\
   & =S_z(1-\bar{\lambda}S_z)^{-1}P e_i(\lambda) +Pe_{i+1}(\lambda)\\
   &=S_z(1-\bar{\lambda}S_z)^{-1}\left( \sum_{l=0}^{n-1} a_{il}(\lambda)e_l(\lambda) \right) +\sum_{l=0}^{n-1} a_{i+1,l}(\lambda)e_l(\lambda).
\end{split}
\end{equation}
Thus by (\ref{5.a}), we have
\begin{equation}\label{5.c}
\begin{split}
  \langle Pe'_i(\lambda), K_{\mu,\mu} \rangle & =\sum_{l=0}^{n-1} a_{il}(\lambda)\frac{l!\mu^{l+1}}{(1-\bar{\lambda}\mu)^{l+3}}+ \sum_{l=0}^{n-1} a_{i+1,l}(\lambda)\frac{l!\mu^{l}}{(1-\bar{\lambda}\mu)^{l+2}} \\
   &=a_{i+1,0}(\lambda)\frac{1}{(1-\bar{\lambda}\mu)^{2}}+\sum_{l=1}^{n-1} \left(\frac{1}{l}a_{i,l-1}(\lambda)+ a_{i+1,l}(\lambda)\right) \frac{l!\mu^{l}}{(1-\bar{\lambda}\mu)^{l+2}}   +a_{i, n-1}(\lambda)\frac{(n-1)!\mu^{n}}{(1-\bar{\lambda}\mu)^{n+2}},
\end{split}
\end{equation}
and by (\ref{5.b}) , we have
\begin{equation}\label{5.d}
\begin{split}
  \langle Pe'_i(\lambda), K_{\mu,\mu} \rangle & =\sum_{l=0}^{n-1} a_{il}(\lambda)\frac{l!\cdot(l+2)\mu^{l+1}}{(1-\bar{\lambda}\mu)^{l+3}}+ a'_{il}(\lambda)\frac{l!\mu^{l}}{(1-\bar{\lambda}\mu)^{l+2}} \\
   &=a'_{i,0}(\lambda)\frac{1}{(1-\bar{\lambda}\mu)^{2}}+\sum_{l=1}^{n-1} \left( \frac{l+1}{l}a_{i,l-1}(\lambda)+a'_{i,l}(\lambda)\right)\frac{l!\mu^{l}}{(1-\bar{\lambda}\mu)^{l+2}}+a_{i,n-1}(\lambda)\frac{n!\mu^n}{(1-\bar{\lambda}\mu)^{n+2}}.
\end{split}
\end{equation}
Since $\left\{ \frac{l!\mu^{l}}{(1-\bar{\lambda}\mu)^{l+2}} \right\}_{l=0}^n$ is linear independent, by (\ref{5.c}) and (\ref{5.d}), we obtain that for~~$i=0,1,\cdots,n-2,~~l=1,\cdots,n-1$,
 \begin{align*}
 &a_{i+1,0}(\lambda)= a'_{i,0}(\lambda),   \\
 &a_{i,l-1}(\lambda)=a_{i+1,l}(\lambda)-a'_{i,l}(\lambda),  \\
 &a_{i,n-1}(\lambda)=0.
  \end{align*}
 For the second formula, let $l=n-1$, we obtain for $i\leq n-3$, $a_{i,n-2}(\lambda)=0$ and $a_{n-2,n-2}(\lambda)=a_{n-1,n-1}(\lambda)$.  Continuing the process, we get
 $a_{00}(\lambda)=\cdots=a_{n-1,n-1}(\lambda)$ and $a_{kl}(\lambda)=0$ for $k<l$.
Thus $[a_{kl}]$ is a lower triangular matrix.  Since $P$ is a projection, $[a_{kl}]$ should be an idempotent, hence the diagonal elements $a_{ii}$ are identically $0$ or $1$. If they are identical zero, an easy computation shows that a lower triangular matrix with diagonal elements are zero is idempotent if and only if it is the zero matrix. If they are identical $1$, then $I-[a_{kl}(\lambda)]$ should   be zero, hence $\Phi(\lambda)$ is $0$ or $I$. That is, the projection contained in $V^*(S_z)$ is either $0$ or $I$, i.e., $S_z$ is irreducible.
\end{proof}

\vspace{0.5cm}
We end this paper with a complete characterization of the reducibility of $S_z^*$ on homogeneous polynomial quotient modules with degree 2. The following technical lemma (Proposition 1, \cite{Englis}) will be used for the proof.
\begin{lemma}\label{6.1}
Let $\Omega$ be a domain in $\mathbb{C}$ and $f(z,w)$ be a function on $\Omega\times \Omega$ which is holomorphic in $z$ and anti-holomorphic in $w$. Then
\[f(z,z)=0\]
for all $z$ in $\Omega$ if and only if $f$ vanishes identically on $\Omega\times \Omega$.
\end{lemma}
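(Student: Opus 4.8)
The forward implication is immediate, so the plan is to prove the substantive direction: that $f(z,z)\equiv 0$ on $\Omega$ forces $f\equiv 0$ on $\Omega\times\Omega$, via a polarization argument on a small bidisk followed by analytic continuation. First I would fix a point $p\in\Omega$ and a radius $r>0$ with $\overline{D(p,r)}\subset\Omega$. Writing the anti-holomorphy in $w$ as $f(z,w)=G(z,\bar w)$ with $G$ separately holomorphic, Hartogs' theorem shows $G$ is jointly holomorphic on $\Omega\times\overline{\Omega}$, so on $D(p,r)\times D(p,r)$ we have an absolutely and locally uniformly convergent expansion
\[
f(z,w)=\sum_{j,k\ge 0}c_{jk}\,(z-p)^{j}\,\overline{(w-p)}^{\,k}.
\]

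Next I would substitute $w=z$ and put $z-p=\rho e^{i\varphi}$ with $0\le\rho<r$. The hypothesis becomes
\[
0=\sum_{j,k\ge 0}c_{jk}\,\rho^{\,j+k}\,e^{i(j-k)\varphi}\qquad(0\le\rho<r,\ \varphi\in[0,2\pi)).
\]
For each fixed $\rho$, integrating against $e^{-im\varphi}$ (legitimate by uniform convergence) isolates the Fourier mode $m$ and gives $\sum_{j-k=m}c_{jk}\,\rho^{\,j+k}=0$ for all $\rho\in[0,r)$ and all $m\in\mathbb{Z}$. For each fixed $m$ this is a power series in $\rho$ vanishing identically, and since the exponents $j+k$ occurring in it are pairwise distinct, uniqueness of Taylor coefficients forces $c_{jk}=0$ for every $j,k$. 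Hence $f$ vanishes on $D(p,r)\times D(p,r)$.

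Finally I would propagate this local vanishing using the identity theorem separately in each variable together with the connectedness of $\Omega$: for each fixed $w\in D(p,r)$ the holomorphic function $z\mapsto f(z,w)$ vanishes on $D(p,r)$, hence on all of $\Omega$, so $f\equiv 0$ on $\Omega\times D(p,r)$; then for each fixed $z\in\Omega$ the anti-holomorphic function $w\mapsto f(z,w)$ vanishes on $D(p,r)$, hence on all of $\Omega$. This yields $f\equiv 0$ on $\Omega\times\Omega$.

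The step I expect to require the most care is the passage from $f(z,z)\equiv 0$ to the vanishing of all coefficients $c_{jk}$, i.e.\ justifying the term-by-term Fourier extraction and the ensuing power-series uniqueness, together with the preliminary point that separate holomorphy and anti-holomorphy produce a genuinely convergent double expansion near the diagonal. An alternative, which I would mention only as a remark, is to regard $\{(z,\bar z):z\in\Omega\}$ as a maximal totally real submanifold of $\Omega\times\overline{\Omega}$ and invoke the fact that a holomorphic function vanishing on such a set vanishes identically; but the elementary polarization above is self-contained and suffices for our purposes.
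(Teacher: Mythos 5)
Your argument is correct: the polarization step (expand $f(z,w)=\sum c_{jk}(z-p)^j\overline{(w-p)}^k$ near a point, restrict to the diagonal in polar coordinates, extract Fourier modes, and use distinctness of the exponents $j+k$ within each mode $j-k=m$ to kill all coefficients) together with the identity theorem in each variable is exactly the standard proof of this fact. The paper itself does not prove the lemma but simply cites it as Proposition~1 of \cite{Englis}, so your self-contained write-up is consistent with, and indeed supplies, the argument the paper takes for granted; the only cosmetic point is that $\overline{\Omega}$ in your second paragraph should be read as the conjugate domain $\Omega^*=\{\bar w: w\in\Omega\}$ rather than the closure.
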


\begin{theorem}\label{2-degree}
Let $p=(z-w)(z-\zeta w) (\zeta \neq 0)$ , then $S_z^*$ is reducible on $\mathcal{N}=H^2(\mathbb{D}^2)\ominus[p]$ if and only if $\zeta=-1$.
\end{theorem}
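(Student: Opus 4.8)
The plan is to treat the two implications separately. For the ``if'' direction, when $\zeta=-1$ we have $p=(z-w)(z+w)=z^{2}-w^{2}$, which is $z^{m}-w^{n}$ with $m=n=2$; since $z^{2}-w^{2}$ has no univariate factors and, by Theorem~\ref{5.3}, has the divisor property, Theorem~\ref{zm-wn} applies and shows that $S_{z}^{*}$ on $[z^{2}-w^{2}]^{\perp}$ is a Cowen--Douglas operator of index $2$ over $\mathbb{D}$ with $V^{*}(S_{z})\cong M_{2}(\mathbb{C})$, so $S_{z}$ is reducible. (Equivalently, one may invoke the reducibility criterion proved above for quotient modules $[q(z,w^{n})]^{\perp}$, with $q(z,u)=z^{2}-u$ and $n=2$.)

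For the ``only if'' direction, assume $\zeta\neq 0,-1$; I must show $S_{z}$ is irreducible. Since $p(\lambda,\cdot)=(\lambda-w)(\lambda-\zeta w)$ has the zeros $w=\lambda$ and $w=\lambda/\zeta$, Theorem~\ref{5.1} gives $\sigma(S_{z})=\overline{\mathbb{D}}$ and $\sigma_{e}(S_{z})=\mathbb{T}\cup\pi_{1}(Z_{p}\cap\mathbb{D}\times\mathbb{T})$. If $|\zeta|<1$, then $\pi_{1}(Z_{p}\cap\mathbb{D}\times\mathbb{T})=\{|\lambda|=|\zeta|\}$, so $\Omega:=\sigma(S_{z})\setminus\sigma_{e}(S_{z})$ has two components: the disc $\{|\lambda|<|\zeta|\}$, on which $Z_{p}(\lambda)=2$, and the annulus $\{|\zeta|<|\lambda|<1\}$, on which $Z_{p}(\lambda)=1$ (only $w=\lambda$ is an interior zero). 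Hence $E_{S_{z}^{*}}$ is a line bundle over the annular component, so in every decomposition $E_{S_{z}^{*}}=E^{1}\oplus E^{2}$ one of $E^{1},E^{2}$ is trivial on that component; thus $E_{S_{z}^{*}}$ is not strictly reducible and, by Theorem~\ref{5.5}, $S_{z}$ is irreducible. If $\zeta=1$, then $p=(z-w)^{2}$ and irreducibility is exactly Theorem~\ref{5.8} with $n=2$. So it remains only to treat $|\zeta|\geq 1$ with $\zeta\neq 1$.

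In that case $\pi_{1}(Z_{p}\cap\mathbb{D}\times\mathbb{T})=\emptyset$ and $p(\lambda,\cdot)$ has two distinct interior zeros $w=\lambda$, $w=\lambda/\zeta$ for every $\lambda\in\mathbb{D}\setminus\{0\}$, so $S_{z}^{*}\in\mathfrak{B}_{2}(\mathbb{D})$ and, by Theorem~\ref{5.1}, an anti-holomorphic frame of $E_{S_{z}^{*}}$ over $\mathbb{D}\setminus\{0\}$ is given by the ``diagonal'' reproducing kernels $e_{0}(\lambda)=K_{\lambda}(z)K_{\lambda}(w)$ and $e_{1}(\lambda)=K_{\lambda}(z)K_{\lambda/\zeta}(w)$; the puncture at $\lambda=0$, where $e_{0}$ and $e_{1}$ coalesce, is harmless since a projection in $V^{*}(S_{z})$ is determined by its action on $\bigvee_{\lambda\in\mathbb{D}\setminus\{0\}}Ker\,S_{z-\lambda}^{*}=\mathcal{N}$. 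A short computation gives the Gram matrix of this frame: with $t=|\lambda|^{2}$, its diagonal entries are $(1-t)^{-2}$ and $(1-t)^{-1}(1-t/|\zeta|^{2})^{-1}$, and its $(0,1)$ entry is $(1-t)^{-1}(1-t/\zeta)^{-1}$. Following the scheme of the proof of Theorem~\ref{5.8}, I would then take a projection $P\in V^{*}(S_{z})$, write its bundle representative $\Phi=\Gamma_{S_{z}^{*}}P=[a_{ij}(\lambda)]$ relative to this frame, differentiate the frame in $\bar{\lambda}$, pair the identities coming from $PS_{z}=S_{z}P$ against the kernels $K_{\mu}(z)K_{\mu}(w)$ and $K_{\mu}(z)K_{\mu/\zeta}(w)$, and use Lemma~\ref{6.1} to pass from the resulting identities on the diagonal to identities in a free variable; this produces a first-order system for the $a_{ij}$ which, together with idempotency of $\Phi$, forces $\Phi\in\{0,I\}$. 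Equivalently, one computes the canonical connection $\Theta=(\bar{\partial}G)\,G^{-1}$ and the curvature of $E_{S_{z}^{*}}$ from $G$ and, via Theorems~\ref{block-diag} and~\ref{von-alg}, shows that off a discrete subset of $\mathbb{D}$ the self-adjoint algebra generated by the curvature and its covariant derivatives equals $M_{2}(\mathbb{C})$, whence $V^{*}(S_{z})=\mathbb{C}I$. In either route the decisive point is that this forcing fails exactly when $\zeta=-1$: there $G$ is symmetric with equal diagonal entries and is diagonalized by the constant basis $(1,\pm 1)$ -- the source of the reducibility in that case -- while for $|\zeta|\geq 1$, $\zeta\neq\pm 1$ the diagonal entries of $G$ differ and/or its $(0,1)$ entry has non-constant argument, so no constant diagonalizing basis exists and the curvature fills out $M_{2}(\mathbb{C})$.

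The hard part is this last case: carrying the computation through cleanly enough that the entire $\zeta$-dependence collapses to the single condition $\zeta=-1$, and doing so uniformly over all $|\zeta|\geq 1$ with $\zeta\neq 1$ -- in particular over $|\zeta|=1$, where the diagonal entries of $G$ coincide but its off-diagonal entry is genuinely complex. This is precisely where the ``diagonal reproducing kernel'' structure of the frame and the polarization Lemma~\ref{6.1} earn their keep.
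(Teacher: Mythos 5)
Your ``if'' direction and your case reductions are fine, and in fact your treatment of $|\zeta|<1$ is cleaner than the paper's: you observe that the Fredholm index drops to $1$ on the annulus $\{|\zeta|<|\lambda|<1\}$, so $E_{S_z^*}$ cannot be strictly reducible and Theorem~\ref{5.5} gives irreducibility at once, whereas the paper instead restricts to the disk $\{|\lambda|<|\zeta|\}$ (where $S_z^*$ is still rank-two Cowen--Douglas) and runs its local computation there. The case $\zeta=1$ via Theorem~\ref{5.8} and the case $\zeta=-1$ via Theorem~\ref{zm-wn} are exactly as in the paper.

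The problem is the remaining case $|\zeta|\geq 1$, $\zeta\neq 1$, which is the actual content of the theorem: you name two strategies (the commutant computation modelled on Theorem~\ref{5.8}, or the curvature-algebra route via Theorems~\ref{block-diag} and~\ref{von-alg}) but execute neither, and you say so yourself. The heuristic you offer in place of the computation --- that for $\zeta\neq\pm 1$ the Gram matrix $G$ admits no \emph{constant} diagonalizing basis --- does not prove irreducibility: a reducing decomposition of $E_{S_z^*}$ is an orthogonal splitting into anti-holomorphic line subbundles spanned by $\lambda$-dependent sections $a(\lambda)K_{\lambda,\lambda}+b(\lambda)K_{\lambda,\lambda/\zeta}$, and excluding these is precisely what must be proved. (For $|\zeta|=1$, $\zeta\neq\pm1$, the diagonal entries of $G$ even coincide, so your heuristic reduces to an unsubstantiated claim about the argument of the off-diagonal entry.) The paper's proof works instead at the point $\lambda=0$ where the two kernels coalesce --- the very puncture you dismiss as harmless: it fixes an orthonormal frame $e_1(0)=\cos t+\sin t\cdot w$, $e_2(0)=-\sin t+\cos t\cdot w$, shows from continuity at $0$ that the coefficients $a,b,c,d$ relative to the natural frame have at most simple poles there, and then combines the orthogonality relation $e_1(\lambda)\perp e_2(\lambda)$, polarized by Lemma~\ref{6.1} and expanded in Laurent series, to force successively $\sin t\cos t=0$, $\sin t=0$, $a=-\bar\zeta b$, $c=-d$, and finally $\zeta=-1$. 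Until you carry out one of your two proposed computations to that conclusion, the ``only if'' direction is not established.
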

\begin{proof}
If $\zeta=1$, we have known $S_z^*$ is irreducible, so we assume $\zeta\neq 1$. If $|\zeta|<1$,
by the locally property of analytic functions, we have $\bigvee_{|\lambda|<|\zeta|} \{K_{\lambda,\lambda}\}=\bigvee_{|\lambda|<1} \{K_{\lambda,\lambda}\}$, thus $S_z^*$ is a Cowen-Douglas operator over $\{z: |z|<|\zeta|\}$. If $|\zeta|\geq 1$, then $\pi_1(Z_p\cap \mathbb{D}\times \mathbb{T})$ is a empty set, thus $S_z^*$ is a Cowen-Douglas operator over $\mathbb{D}$. So we can  assume $S_z^*$ is a Cowen-Douglas operator on some small disk containing 0. Since $S_z^*$ is reducible, then its kernel space vector bundle  is reducible.
Let $e_1(\lambda), e_2(\lambda)$ be a local orthogonal frame on some neighborhood of $0$, say $\Omega$.  Without loss of generality,   assume
\[ e_1(0)(z,w)=\cos t +\sin t\cdot w,~~~~e_2(0)(z,w)=-\sin t +\cos t\cdot w \]
for some $t\in [0, \pi/2)$.
Note $K_{\lambda,\lambda}, K_{\lambda, \zeta \lambda}$ is a natural basis for $\Omega\setminus \{0\}$, then we can write
\[ e_1(\lambda)=a(\lambda)K_{\lambda,\lambda}+ b(\lambda)  K_{\lambda, \zeta \lambda} \]
\[ e_2(\lambda)=c(\lambda)K_{\lambda,\lambda}+ d(\lambda)  K_{\lambda, \zeta \lambda} \]
where the coefficients $a(\lambda), b(\lambda), c(\lambda), d(\lambda)$ are anti-holomorphic on $\Omega\setminus \{0\}$.

Since $e_1(\lambda)$ is continuous at $\lambda=0$, we have
\begin{equation*}
  \lim_{\bar{\lambda}\rightarrow 0} a(\lambda)K_{\lambda,\lambda}+b(\lambda)K_{\lambda,\zeta\lambda}=\cos t+\sin t\cdot  w,
\end{equation*}
that is,
\begin{equation*}
  \lim_{\bar{\lambda}\rightarrow 0} \frac{1}{1-\bar{\lambda}z}[a(\lambda)\frac{1}{1-\bar{\lambda}w}+b(\lambda)\frac{1}{1-\overline{\zeta\lambda}w}]=\cos t+\sin t\cdot  w.
\end{equation*}
It follows that
\begin{equation*}
  \lim_{\bar{\lambda}\rightarrow 0} \sum_{n\geq 0} [a(\lambda)+\bar{\zeta}^n b(\lambda)]\cdot \bar{\lambda}^n w^n=\cos t+\sin t\cdot w.
\end{equation*}
It implies that  $\lim\limits_{\bar{\lambda}\rightarrow 0} [a(\lambda)+\bar{\zeta}^n b(\lambda)]\bar{\lambda}^n$ exists for all $n\geq 0$ and
\begin{align}
  &  \lim_{\bar{\lambda}\rightarrow 0} ~[a(\lambda)+ b(\lambda)] =\cos t,  \\
 & \lim_{\bar{\lambda}\rightarrow 0} ~[a(\lambda)+ \bar{\zeta}b(\lambda)] \bar{\lambda} =\sin t ,  \\
  &\lim_{\bar{\lambda}\rightarrow 0} ~[a(\lambda)+ \bar{\zeta}^n b(\lambda)] \bar{\lambda}^n =0~~for~~all~~n\geq 2.
\end{align}
Multiply (5.9) by $\bar{\lambda}^{n-1}$ and subtract (5.10), we get
\[ \lim_{\bar{\lambda}\rightarrow 0} ~ (1-\bar{\zeta})\bar{\zeta}^{n-1} b(\lambda) \bar{\lambda}^{n} =0, ~~for~~all~~n\geq 2.\]
This implies that $\bar{\lambda}=0$ is at most a simple pole of $b(\lambda)$. By a similar discussion, it also holds for $a(\lambda), c(\lambda), d(\lambda)$.
Let
\begin{align*}
   a(\lambda)=\sum_{i\geq -1} a_i \bar{\lambda}^i,~~~~ &~~~~ b(\lambda)=\sum_{i\geq -1} b_i \bar{\lambda}^i  \\
   c(\lambda)=\sum_{i\geq -1} c_i \bar{\lambda}^i ,~~~~&~~~~~d(\lambda)=\sum_{i\geq -1} d_i \bar{\lambda}^i
\end{align*}
be the Laurent series of $a(\lambda), b(\lambda), c(\lambda), d(\lambda)$. Then (5.8) and (5.9) implies that
\begin{equation*}
  a_0+b_0=\cos t,~~~a_{-1}+b_{-1}=0,~~~a_{-1}+\bar{\zeta}b_{-1}=\sin t.
\end{equation*}
A similar computation for $c(\lambda), d(\lambda)$ shows that
\begin{equation*}
  c_0+d_0=-\sin t,~~~c_{-1}+d_{-1}=0,~~~c_{-1}+\bar{\zeta}d_{-1}=\cos t.
\end{equation*}
Since $e_1(\lambda) \perp e_2(\lambda)$, we have
\begin{equation}
  \frac{a(\lambda)\overline{c(\lambda)}}{1-|\lambda|^2}+
  \frac{b(\lambda)\overline{d(\lambda)}}{1-|\zeta|^2|\lambda|^2}
  +\frac{a(\lambda)\overline{d(\lambda)}}{1-\zeta|\lambda|^2}
  +\frac{b(\lambda)\overline{c(\lambda)}}{1-\bar{\zeta}|\lambda|^2}=0
\end{equation}
holds for any $\lambda \in \Omega\setminus \{0\}$. By Lemma \ref{6.1}, we have
\begin{equation}\label{5.12}
  \frac{a(\lambda)\overline{c(\mu)}}{1-\bar{\lambda} \mu}+
  \frac{ b(\lambda)\overline{d(\mu)}}{1-|\zeta|^2\bar{\lambda} \mu}
  +\frac{a(\lambda)\overline{d(\mu)}}{1-\zeta\bar{\lambda} \mu}
  +\frac{b(\lambda)\overline{c(\mu)}}{1-\bar{\zeta}\bar{\lambda} \mu}=0,  ~~~\lambda,\mu\in \Omega\setminus \{0\}
\end{equation}
which gives that
\begin{equation}\label{5.13}
  \sum_{n\geq0}~ (a(\lambda)+\bar{\zeta}^n b(\lambda))(\overline{c(\mu)+\bar{\zeta}^nd(\mu)} )\cdot (\bar{\lambda}\mu)^n=0.
\end{equation}
Substituted by the the Laurent series of $a(\lambda), b(\lambda), c(\lambda), d(\lambda)$, we get
\begin{align*}
   &\sum_{n\geq 0} \sum_{i,j \geq -1} (a_i+\bar{\zeta}^n b_i)(\overline{c_j+\bar{\zeta}^nd_j} )\cdot \bar{\lambda}^{n+i} \mu^{n+j} \\
   &=\sum_{k, l\geq -1} \left( \sum_{n\geq 0} ~(a_{k-n}+\bar{\zeta}^n b_{k-n})(\overline{c_{l-n}+\bar{\zeta}^n d_{l-n}} )\right) \bar{\lambda}^{k} \mu^{l}=0.
\end{align*}
Therefore
\[ \sum_{n\geq 0}~ (a_{k-n}+\bar{\zeta}^n b_{k-n})(\overline{c_{l-n}+\bar{\zeta}^n d_{l-n}} )=0 \]
holds for all $k, l\geq -1$. Let $k=0$, we get
\begin{equation}\label{5.14}
  \cos t \cdot(c_l+d_l) +\sin t \cdot(c_{l-1}+\bar{\zeta}d_{l-1})=0~~~~for~~all~~l\geq -1.
\end{equation}
Symmetrically, let $l=0$, we get
\begin{equation}\label{5.15}
  -\sin t \cdot(a_k+b_k) +\cos t \cdot(a_{k-1}+\bar{\zeta}b_{k-1})=0~~~~for~~all~~k\geq -1.
\end{equation}
By (\ref{5.14}) and $(\ref{5.15})$, we have
\[ \cos t \cdot(c(\mu)+d(\mu))+\sin t \cdot\bar{\mu} ~(c(\mu)+\bar{\zeta}d(\mu))=0 \]
\[ -\sin t \cdot(a(\lambda)+b(\lambda))+\cos t \cdot\bar{\lambda} ~(a(\lambda)+\bar{\zeta}b(\lambda))=0 \]
that is,
\begin{equation}\label{5.16}
  a(\lambda)=\frac{\sin t -\bar{\zeta}\bar{\lambda}}{-\sin t +\bar{\lambda} \cos t} b(\lambda),~~~~~~~c(\mu)=\frac{\cos t +\bar{\zeta}\bar{\mu} \sin t}{\cos t +\bar{\mu} \sin t}d(\mu).
\end{equation}
Combine with (\ref{5.13}), we get
\begin{equation*}
  \sum_{n\geq0}~ \left( (1-\bar{\zeta}^n)\sin t +(\bar{\zeta}^n-\bar{\zeta})\bar{\lambda} \cos t \right)\overline{\left((1-\bar{\zeta}^n)\cos t -(\bar{\zeta}^n-\bar{\zeta})\bar{\mu} \sin t\right)} \cdot (\bar{\lambda}\mu)^n=0,
\end{equation*}
Consider the coefficients of $(\bar{\lambda}\mu)^n$, we obtain
\begin{equation*}
  \left( |1-\bar{\zeta}^n|^2-|\bar{\zeta}^{n-1}-\bar{\zeta}|^2 \right)\sin t \cos t=0
\end{equation*}
for all $n\geq 1$.  Recall that $t\in [0, \frac{\pi}{2})$,  we get $\sin t=0$.
By (\ref{5.16}), then $a(\lambda)=-\bar{\zeta}b(\lambda)$, $c(\mu)=-d(\mu)$. Combine with (\ref{5.12}), we have
\begin{equation*}
  \frac{\bar{\zeta}}{1-|\lambda|^2}+
  \frac{1}{1-|\zeta|^2|\lambda|^2}
  -\frac{\bar{\zeta}}{1-\zeta|\lambda|^2}
  -\frac{1}{1-\bar{\zeta}|\lambda|^2}=0,~~~\lambda\in \Omega\setminus\{0\},
\end{equation*}
which gives that $\zeta=-1$.

Conversely, by Theorem \ref{zm-wn}, for quotient module $[z^2-w^2]^\perp$, $S_z^*$ is reducible. The proof is complete.
\end{proof}

\begin{corollary}
Suppose $p=(z-\alpha w)(z-\beta w),  \alpha,\beta \neq 0$ is a homogeneous polynomial. Then on the quotient module $[p]^\perp$, $S_z$ is reducible if and only if $\alpha+\beta=0$.
\end{corollary}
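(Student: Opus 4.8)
The plan is to deduce the corollary from results already established: the reducibility of $S_z$ on $[p]^\perp$ when $p$ has the divisor property, no factor depending only on $z$, and $p(z,w)=q(z,w^n)$ for some $n\ge2$; Theorem \ref{zm-wn}; the irreducibility statements of Theorem \ref{5.8} and Theorem \ref{2-degree}; and the strict-reducibility dichotomy of Theorem \ref{5.5}. Throughout, write $p=(z-\alpha w)(z-\beta w)$ with $\alpha,\beta\ne0$. Since $p$ is a polynomial it has the divisor property (Theorem \ref{5.3}) and it has no univariate factor, so by Theorem \ref{5.5} the operator $S_z^*$ on $\mathcal N=[p]^\perp$ is a generalized Cowen-Douglas operator whose index at $\lambda\in\mathbb D$ equals the number of zeros of $p(\lambda,\cdot)=(\lambda-\alpha w)(\lambda-\beta w)$ in $\mathbb D$, i.e. the number of points of $\{\lambda/\alpha,\lambda/\beta\}$ lying in $\mathbb D$. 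The first step is to record this index picture together with the resulting connected components of $\Omega=\sigma(S_z)\setminus\sigma_e(S_z)$, using Theorem \ref{5.1}.

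For the ``if'' direction, if $\alpha+\beta=0$ then $p=z^2-\alpha^2w^2=q(z,w^2)$ with $q(z,u)=z^2-\alpha^2u$; since $z^2-\alpha^2w^2$ has no factor depending only on $z$ and has the divisor property, the theorem on quotient modules of the form $[q(z,w^n)]^\perp$, applied with $n=2$, gives at once that $S_z$ on $\mathcal N$ is reducible. (Alternatively, when $|\alpha|=1$ the map $w\mapsto\bar\alpha w$ induces a unitary on $H^2(\mathbb D^2)$ commuting with $T_z$ and carrying $[p]$ onto $[z^2-w^2]$, so one may invoke Theorem \ref{zm-wn}.)

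For the ``only if'' direction, assume $\alpha+\beta\ne0$, and handle two easy cases first. If $|\alpha|\ne|\beta|$, say $|\alpha|<|\beta|$, and $\min(|\alpha|,|\beta|)<1$, then over the region $\{\lambda:|\alpha|<|\lambda|<\min(|\beta|,1)\}$ exactly one of $\lambda/\alpha,\lambda/\beta$ lies in $\mathbb D$, so this region is a connected component of $\Omega$ on which $E_{S_z^*}$ has rank $1$; since a rank-one bundle is irreducible, $E_{S_z^*}$ is not strictly reducible and Theorem \ref{5.5} forces $S_z$ to be irreducible. If instead $|\alpha|=1$ (or symmetrically $|\beta|=1$), the unitary $V_\alpha: f(z,w)\mapsto f(\alpha z,w)$ on $H^2(\mathbb D^2)$ satisfies $V_\alpha T_zV_\alpha^*=\alpha T_z$ and $V_\alpha[p]=[(z-w)(z-(\beta/\alpha)w)]$, so $S_z$ on $\mathcal N$ is unitarily equivalent, up to the unimodular scalar $\bar\alpha$, to $S_z$ on $[(z-w)(z-(\beta/\alpha)w)]^\perp$, which is irreducible by Theorem \ref{2-degree} because $\beta/\alpha\ne-1$.

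The remaining case --- $|\alpha|\ne1\ne|\beta|$ together with $|\alpha|=|\beta|$ or $1<\min(|\alpha|,|\beta|)$ --- is the one I expect to be the main obstacle: here $E_{S_z^*}$ has constant rank $2$ over a disk $\Delta$ about $0$ (by Theorem \ref{5.1}, since $\pi_1(Z_p\cap\mathbb D\times\mathbb T)$ stays away from a neighborhood of the origin), so $S_z^*\in\mathfrak B_2(\Delta)$ and neither the index criterion nor a reduction by change of variables to Theorem \ref{5.8} or Theorem \ref{2-degree} applies. The plan is to rerun the proofs of those two theorems with the kernels adjusted, after first normalizing $\alpha$ to be real and positive via the unitary $w\mapsto e^{ic}w$ (which leaves $T_z$ unchanged). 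When $\alpha=\beta$, so $p=(z-\alpha w)^2$ and $p(\lambda,\cdot)$ has the double zero $\lambda/\alpha$, one repeats the argument of Theorem \ref{5.8} using the frame $e_i(\lambda)=K_\lambda(z)K_{\lambda/\alpha}^{(i)}(w)$: the block-triangular form (\ref{block}) of a projection $P\in V^*(S_z)$ and the recursion for its entries are unaffected, forcing $P$ to be $0$ or $I$. When $\alpha\ne\beta$, one repeats the argument of Theorem \ref{2-degree} with $\{K_{\lambda,\lambda/\alpha},K_{\lambda,\lambda/\beta}\}$ as the natural basis of $Ker S_{z-\lambda}^*$; the normalization $e_1(0)=\cos t+\sin t\,w$, $e_2(0)=-\sin t+\cos t\,w$ is legitimate because $p(0,\cdot)=\alpha\beta w^2$ makes the fibre $Ker S_z^*$ equal to the span of $1$ and $w$, and the continuity of the $e_i$ at $0$, the Laurent expansions of their coefficient functions, and the orthogonality of $e_1(\lambda),e_2(\lambda)$ combined with Lemma \ref{6.1} all proceed as before, with the ratio $\beta/\alpha$ of the two zeros of $p(\lambda,\cdot)$ playing the role of $\zeta$; the conclusion ``$\zeta=-1$'' of Theorem \ref{2-degree} then reads $\beta/\alpha=-1$, i.e. $\alpha+\beta=0$, which is excluded. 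The delicate point to check is that the linear system for the coefficient functions obtained by pairing $Pe_i'(\lambda)$ against the two basis kernels reduces to the same dichotomy as in Theorem \ref{2-degree} --- bulkier, since neither $K_{\lambda,\lambda/\alpha}$ nor $K_{\lambda,\lambda/\beta}$ has coincident indices, but structurally the same under the substitution $\zeta\mapsto\beta/\alpha$. Once this is verified, $S_z$ is irreducible in this case as well, which completes the proof.
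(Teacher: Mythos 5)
Your overall strategy is sound and the parts you actually carry out are correct, but they follow a genuinely different route from the paper, and one of your cases is left unexecuted. For the ``if'' direction the paper reduces $[z^2-\alpha^2w^2]^\perp$ to $[z^2-w^2]^\perp$ and cites Theorem \ref{zm-wn}, whereas you invoke the $p(z,w)=q(z,w^n)$ reducibility theorem directly with $q(z,u)=z^2-\alpha^2u$; both work. For the ``only if'' direction your cases (a) and (b) are clean shortcuts the paper does not use: the rank-one component of $\Omega$ when $|\alpha|\neq|\beta|$ with $\min(|\alpha|,|\beta|)<1$ kills strict reducibility via Theorem \ref{strictly-reducible}/\ref{5.5}, and the rotation unitary when $|\alpha|=1$ reduces to Theorem \ref{2-degree} exactly. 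The paper instead disposes of \emph{all} moduli at once with a single observation: the orthogonality condition
\[
\bigl\langle a(\lambda)K_{\lambda,\alpha\lambda}+b(\lambda)K_{\lambda,\beta\lambda},\; c(\lambda)K_{\lambda,\alpha\lambda}+d(\lambda)K_{\lambda,\beta\lambda}\bigr\rangle=0
\]
admits a solution in anti-holomorphic coefficients if and only if the corresponding condition for $\{K_{\lambda,\lambda},K_{\lambda,(\beta/\alpha)\lambda}\}$ does, so reducibility of $E_{S_z^*}$ on $[(z-\alpha w)(z-\beta w)]^\perp$ is equivalent to reducibility on $[(z-w)(z-(\beta/\alpha)w)]^\perp$ (and on $[(z-w)^2]^\perp$ when $\alpha=\beta$), after which Theorems \ref{2-degree} and \ref{5.8} apply verbatim and give $\beta/\alpha=-1$, i.e.\ $\alpha+\beta=0$. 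In effect the Gram matrix of your frame $\{K_{\lambda,\lambda/\alpha},K_{\lambda,\lambda/\beta}\}$ is that of the normalized frame after the holomorphic reparametrization $\lambda\mapsto|\alpha|\lambda$ and multiplication of the metric by a positive scalar function, neither of which affects the existence of an orthogonal splitting.

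The genuine shortfall is your case (c): you propose to rerun the entire computational proofs of Theorem \ref{5.8} and Theorem \ref{2-degree} with rescaled kernels, you yourself identify the ``delicate point'' (that the linear system obtained from pairing $Pe_i'(\lambda)$ against the basis kernels reduces to the same dichotomy with $\beta/\alpha$ in place of $\zeta$), and you do not verify it. As written, the proof is incomplete there: the recursions in Theorem \ref{5.8} use the specific identity $e_i'(\lambda)=\frac{z}{1-\bar\lambda z}e_i(\lambda)+e_{i+1}(\lambda)$, which acquires extra factors of $\bar\alpha$ when the zero is $\lambda/\alpha$, and the final display of Theorem \ref{2-degree} becomes a new identity in $|\alpha|,|\beta|,\alpha\bar\beta$ whose only solution must be shown to be $\alpha=-\beta$; neither is a purely formal substitution. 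The clean repair is to replace the rerun by the paper's equivalence of orthogonality conditions, which shows that no recomputation is needed at all.
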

\begin{proof}
If $\alpha\neq \beta$, note that there exist anti-holomorphic functions $a(\lambda), b(\lambda), c(\lambda), d(\lambda)$ such that
\begin{equation*}
   \langle a(\lambda)K_{\lambda, \alpha \lambda}+b(\lambda) K_{\lambda, \beta \lambda} , c(\lambda)K_{\lambda, \alpha \lambda} +d(\lambda)K_{\lambda, \beta \lambda}  \rangle =0
\end{equation*}
 is equivalent to say that there exist anti-holomorphic functions $a_1(\lambda), b_1(\lambda), c_1(\lambda), d_1(\lambda)$ such that
\begin{equation*}
   \langle a_1(\lambda)K_{\lambda,  \lambda}+b_1(\lambda) K_{\lambda, \frac{\beta}{\alpha} \lambda} , c_1(\lambda)K_{\lambda,  \lambda} +d_1(\lambda)K_{\lambda, \frac{\beta}{\alpha} \lambda}  \rangle =0.
\end{equation*}
This implies that the kernel space vector bundle of $S_z^*$ on $[(z-\alpha w)(z-\beta w)]^\perp$ is reducible if and only if the kernel space vector bundle of $S_z^*$ on $[(z- w)(z-\frac{\beta}{\alpha} w)]^\perp$ is reducible. Hence by Theorem \ref{2-degree}, $S_z$ is reducible if and only if $\alpha+\beta=0$.

If $\alpha=\beta$, by a similar discussion, the kernel space vector bundle of $S_z^*$ on $[(z-\alpha w)^2]^\perp$ is reducible if and only if the kernel space vector bundle of $S_z^*$ on $[(z- w)^2]^\perp$ is reducible. By Theorem \ref{5.8}, in this case $S_z$ is irreducible. The proof is complete.
\end{proof}


\begin{thebibliography}{99}

\bibitem{Agler} J. Agler, {``On the representation of certain holomorphic functions defined on a polydis"
in Topics in Operator Theory: Ernst D. Hellinger Memorial Volume}, Oper. Theory Adv. Appl. 48, Birkh\"{a}user, Basel, 1990, 47-66.

\bibitem{BSV}  J. A. Ball, C. Sadosky, and V. Vinnikov, {Scattering systems with several evolutions and multidimensional input/state/output systems}, Integral Equations Operator Theory 52 (2005), no. 3, 323-393.

\bibitem{Ber} H. Bercovici, {Operator theory and arithmetic in $H^{\infty}$}, Mathematical Surveys and Monographs, No. 26, A.M.S. 1988, Providence, Rhode Island.


\bibitem{Beu} A. Beurling, {On two problems concerning linear transformations in Hilbert spaces}, Acta Math. \textbf{81} (1949) 239-255.



\bibitem{BK}K. Bickel and G. Knese, {Inner functions on the bidisk and associated Hilbert spaces}, J. Funct. Anal., \textbf{265} (2013), no. 11, 2753-2790.

 \bibitem{BL} K. Bickel and C. Liaw, {Properties of Beurling-type submodules via Agler decompositions}, J. Funct. Anal., \textbf{272} (2017), no. 1, 83-111.
\bibitem{Bli} G. A. Bliss, {Algebraic functions}, Dover Publications (2004).

\bibitem{ChD} L. Chen, R. G. Douglas and K. Guo, {On the double commutant of the Cowen-Douglas operators}, Journal of Functional Analysis \textbf{260}(2011), 1925-1943.


   \bibitem{CG} X. Chen and K. Guo, {Analytic Hilbert modules}, Chapman  Hall/CRC Research Notes in Mathematics, 433. Chapman  Hall/CRC, Boca Raton, FL, 2003.

\bibitem{CD} M. J. Cowen and R. G. Douglas, {Equivalence of connections}, Adv.Math.\textbf{56} (1985), 39-61.

\bibitem{CoD}  M. J. Cowen and R. G. Douglas, {Complex geometry and operator theory},  Acta  Math. \textbf{141} (1978),  187-261.
\bibitem{DP}R. G. Douglas and V. I. Paulsen, Hilbert
modules over function algebras, Pitman Research Notes in Math., 217,
1989.

\bibitem{DGN}  Hui Dan, Kunyu Guo and Jiaqi Ni, {Invariant subspaces of weighted Bergman spaces in infinitely many variables}. Preprint, 2021 https://arxiv.org/abs/2103.04145.


\bibitem{DPW}  R. G. Douglas, M. Putinar and K. Wang , {Reducing subspaces for analytic multipliers of the Bergman space}. J. Funct. Anal. \textbf{263} (2012), no. 6, 1744-1765.

\bibitem{DSZ}  R. G. Douglas, S. Sun and D. Zheng, {Multiplication operators on the Bergman space via analytic continuation}, Adv. Math. \textbf{226} (2011), 541-583.
\bibitem{DY} R. G. Douglas and R. Yang, {Operator theory in the Hardy space over the bidisk},  Integral Equations Operator Theory,  \textbf{38} (2) (2000), 207-221.

\bibitem{DF} R. G. Douglas and C. Foias, {``On the structure of the square of a $C_0(1)$ operator" in Modern
Operator Theory and Applications}, Oper. Theory Adv. Appl. 170, Birkh\"{a}user, Basel, 2007, 75-84.

\bibitem{Englis}  M. Englis, {Density of algebras generated by Toeplitz operator on Bergman spaces}, Ark. Mat. \textbf{30} (1992), 227-243.

\bibitem{Gar} J. B. Garnett, {Bounded analytic functions}, Graduate Texts in Math.  \textbf{236}, Springer, New York, 2007.

\bibitem{GH} K. Guo and H. Huang, {Multiplication Operators on the Bergman Space}, Springer Berlin Heidelberg, 2015.

\bibitem{GW} K. Guo and K. Wang, {Beurling type quotient modules over the bidisk and boundary representations}, J. Funct. Anal. \textbf{257} (2009), no. 10, 3218-3238.


\bibitem{Guo2} K. Guo, {Equivalence of Hardy submodules generated by polynomials}, J.Funct.Anal.\textbf{178} (2000), 343-371.


\bibitem{Kne}  G. Knese, {Polynomials with no zeros on the bidisk},  Anal. PDE   \textbf{3}   2010, no .2, 109-149.

\bibitem{Koba} S. Kobayashi, {Differential Geometry of Complex Vector Bundles}, Princeton University Press and Iwanami Shoten, 1987.

\bibitem{Li}Y. Li, Y. Yang, and Y. Lu, {Reducibility and unitary equivalence for a class of truncated
Toeplitz operators on model spaces}, New York J. Math. 24 (2018), 929-946.

\bibitem{NF}B. Sz.-Nagy, C. Foias,  H.Bercovici and  L.K\'{e}rchy,  {Harmonic Analysis of Operators on Hilbert Space},
2nd edition. Revised and enlarged edition. Universitext. Springer, New York,  2010.


\bibitem {Nik}  N. K. Nikolskii, Treatise on the shift operator, A series of Comprehensive Studies in Mathematics 273, Springer-Verlag, 1986.





\bibitem{Ric} S. Richter, {On invariant subspaces of multiplication operators on Banach spaces of analytic
functions}, Ph. Dissertation, Univ. of Michigan, 1986.



\bibitem{Rud} W. Rudin, {Function theory in polydiscs}, Benjamin, New York, 1969.

\bibitem{Taylor} G. D. Taylor, {Multipliers on $D_\alpha$}, Trans.Amer.Math.Soc.,\textbf{123}(1966), 229-240.

\bibitem{Yan2001}R. Yang, {Operator theory in the Hardy space over the bidisk. \uppercase\expandafter{\romannumeral 3}}, J. Funct. Anal., \textbf{186} (2) (2001), 521-545.

\bibitem{Yan2002}R. Yang, {Operator theory in the Hardy space over the bidisk. \uppercase\expandafter{\romannumeral 2}}, Integral Equations Operator Theory, \textbf{42} (1) (2002), 99-124.

\bibitem{Yan2019}   R. Yang, {A brief survey of operator theory in $H^2(\mathbb{D}^2)$}. Handbook of analytic operator theory, 223-258, CRC Press/Chapman Hall Handb. Math. Ser., CRC Press, Boca Raton, FL, 2019.

\bibitem{YZL} Y. Yang , S. Zhu and Y. Lu , {The reducibility of compressed shifts on a class of quotient modules over the bidisk}, Ann.Funct.Anal. \textbf{10} (2019),no.4,447-459.

\bibitem{Zhu}  K. Zhu, {Reducing subspaces for a class of multiplication operators}, J. London Math. Soc. (2) 62 (2000), no. 2, 553-568.

\bibitem{ZYL} S. Zhu, Y. Yang and Y. Lu, { The reducibility of compressed shifts on Beurling type quotient modules over the bidisk}. J. Funct. Anal. \textbf{278} (2020), no. 1, 108304, 40 pp.








%
%
%
%

%
%
%
%
%

%


%

%
%
%

\end{thebibliography}
\end{document}